\tikzstyle{node}=[fill=white, draw=black, shape=circle, minimum size=7mm]
\tikzstyle{edge}=[->]
\newcommand{\xrightsquigarrow}[1]{\overset{#1}{\rightsquigarrow}}
\newcommand{\tot}{\xrightarrow}
\newcommand{\tp}{\rightsquigarrow}
\newcommand{\abv}{\uparrow}
\newcommand{\blw}{\downarrow}
\newcommand{\F}{\ensuremath{F}}
\newcommand{\U}{\ensuremath{U}}
\newcommand{\E}{\ensuremath{E}}
\newcommand{\edgel}{\,\triangleleft\,}
\newcommand{\lexdft}{<_D}
\newcommand{\lexbft}{<_B}
\newtheorem{theorem}{Theorem}
\newtheorem{corollary}[theorem]{Corollary}
\newtheorem{lemma}[theorem]{Lemma}
\theoremstyle{definition}
\newtheorem{definition}{Definition}
\theoremstyle{remark}
\newtheorem*{remark}{Remark}
\newcommand\thankssymb[1]{\textsuperscript{\@fnsymbol{#1}}}
\newif\ifarxiv
\newcommand{\siddhuack}{Supported by \emph{DFF $\mid$ Natural Sciences} Research Project 1 115554 \emph{Cons-Free Recursion Theory}.}
\newcommand{\robinack}{Supported by \emph{DFF $\mid$ Natural Sciences} International Postdoctoral Fellowship 0131-00025B \emph{Landauer Meets von Neumann: Reversibility in Categorical Quantum Semantics}.}
\title{Graph Traversals as Universal Constructions}
\author[S. Bhaskar]{Siddharth Bhaskar\thankssymb{1}}
\address{University of Copenhagen, Denmark}
\email{sbhaskar@di.ku.dk}
\thanks{\thankssymb{1}\siddhuack}
\author[R. Kaarsgaard]{Robin Kaarsgaard\thankssymb{2}}
\address{University of Edinburgh, United Kingdom}
\email{robin.kaarsgaard@ed.ac.uk}
\thanks{\thankssymb{2}\robinack}
\begin{document}

\maketitle

\begin{abstract}
We exploit a decomposition of graph traversals to give a novel characterization of depth-first \emph{and} breadth-first traversals as universal constructions. Specifically, we introduce functors from two different categories of \emph{edge-ordered} directed graphs into two different categories of transitively closed edge-ordered graphs; one defines the lexicographic depth-first traversal and the other the lexicographic breadth-first traversal. We show that each functor factors as a composition of universal constructions, and that the usual presentation of traversals as linear orders on vertices can be recovered with the addition of an inclusion functor. Finally, we raise the question of to what extent we can recover search algorithms from the categorical description of the traversal they compute.
\end{abstract}

\section{Introduction}

Graph searches are algorithms for visiting the vertices in a connected graph from a prescribed source. Both graph searches and their resulting vertex orders, or \emph{traversals}, are absolutely fundamental in the theory of graph algorithms, and have important applications in other areas of theoretical computer science such as computational complexity theory.

We start from the premise that a concept as natural as a traversal should be obtainable canonically from the original graph. For example, let us consider the graph $G$ in Figure \ref{fig:4-cycle}, and fix $a$ as a source. Of the six vertex orderings of $G$ starting with $a$, two are not traversals: $(a,d,b,c)$ and $(a,d,c,b)$. This is because while searching a graph, each vertex added must be in the boundary of previously visited vertices, but $d$ is not in the boundary of $\{a\}$.

\ifarxiv
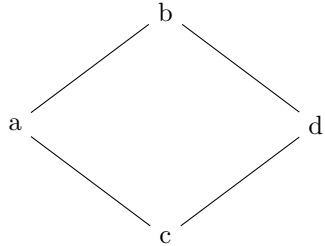
\begin{figure}[b]
    \centering
    \begin{tikzpicture}
              \node (a) at (0,0) {a};
              \node (b) at (2,1.5) {b};
              \node (c) at (2,-1.5) {c};
              \node (d) at (4,0) {d};
              
              \draw (a) edge (b);
              \draw (a) edge (c);
              \draw (b) edge (d);
              \draw (c) edge (d);
        \end{tikzpicture}
    \caption{A 4-cycle}
    \label{fig:4-cycle}
\end{figure}
\else
\begin{figure}
    \centering
    \begin{tikzpicture}
              \node (a) at (0,0) {a};
              \node (b) at (2,1.5) {b};
              \node (c) at (2,-1.5) {c};
              \node (d) at (4,0) {d};
              
              \draw (a) edge (b);
              \draw (a) edge (c);
              \draw (b) edge (d);
              \draw (c) edge (d);
        \end{tikzpicture}
    \caption{A 4-cycle}
    \label{fig:4-cycle}
\end{figure}
\fi

Of the four remaining vertex orders, two are breadth-first traversals ($(a,b,c,d)$ and $(a,c,b,d)$) and two are depth-first traversals ($(a,b,d,c)$ and $(a,c,d,b)$). This can easily be checked by hand: in a breadth-first search, we go level-by-level, and must visit both $b$ and $c$ before we visit $d$. In a depth-first search, we prioritize the neighbor of the most recently visited vertex, so we visit $d$ before the latter of $\{b,c\}$.

However, notice that there is no way of canonically distinguishing between the two breadth-first or the two depth-first traversals. Concretely, once we visit $a$, there is no canonical way to choose between $b$ and $c$. A natural fix is to linearly order each neighborhood and visit lesser neighbors first. We call the resulting traversals \emph{lexicographic}. For example, if we say that $b < c$, then the lexicographic breadth-first traversal is $(a,b,c,d)$ and the lexicographic depth-first traversal is $(a,b,d,c)$. If we say that $c < b$, we get $(a,c,b,d)$ and $(a,c,d,b)$ respectively. We call a graph whose neighborhoods are linearly ordered an \emph{edge-ordered} graph.

In the present paper, we show that both the lexicographic breadth-first traversal and lexicographic depth-first traversal are canonically obtainable from a given edge-ordered graph with a distinguished source. Specifically, we equip edge-ordered graphs with two different kinds of morphisms, and obtain lexicographic breadth- and depth-first traversals by applying a functor out of each category of edge-ordered graphs into the category of linear orders. We furthermore factor each functor as a composition of a forgetful functor and a sequence of \emph{universal} (free and cofree) constructions on edge-ordered graphs.

At a first approximation, each lexicographic traversal can be expressed as the composition of a least-path tree and a transitive closure (see Figure~\ref{fig:traversal}). This decomposition was first observed in \cite{TK95}---and not in the context of category theory--- and used to derive efficient parallel algorithms; see also \cite{ACD20,TK01}. The main technical contribution of our paper is in identifying precisely the right notions of edge-ordered graphs and homomorphisms that allow us to formulate least-path trees and transitive closures as universal constructions.


To the best of our knowledge, equipping algorithmic problems with a categorical structure is a relatively recent idea. While graphs have been studied extensively from a categorical point of view, the focus has been on topics such as graph rewriting and string diagrams~\cite{DixonKissinger2013,LackSob2005} and relationships with properads~\cite{Kock2016} rather than graph algorithms. The only other papers that we know of in the algorithmic vein are that of Master on the open algebraic path problem \cite{Mas20} and the compositional algorithm by Rathke, Soboci{\'{n}}ski and Stephens~\cite{RathSobSte2014} for reachability in Petri nets. These papers suggest that there might be a categorical setting for reachability problems on graphs, generally understood.

\begin{figure}
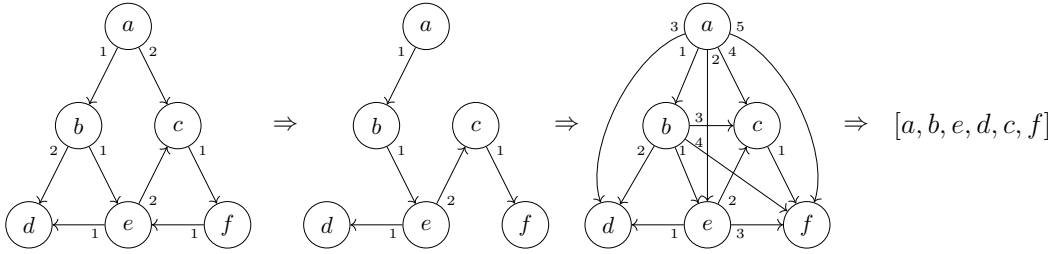

    \centering
    \scalebox{0.88}{\tikzfig{graph_horizontal}}
    \caption{The construction of the lexicographic depth-first traversal starting from $a$ on a given edge-ordered graph. First we extract the least-path tree, transitively close it, then isolate the ordered neighborhood of $a$. Numerals indicate the edge ordering. Each transformation is universal, except for a ``silent'' (forgetful) transformation fixing the transitive graph but forgetting some structure on morphisms.}
    \label{fig:traversal}
\end{figure}

In \cite{Abr20}, Abramsky describes a ``great divide'' between those areas of theoretical computer science focused on \emph{structure} (semantics and type theory), and those focused on \emph{power} (computability and complexity theory); they have ``almost disjoint communities'' with ``no common technical language or tools.'' He proposes a high-level ``theory-building'' program of integrating these approaches with the intent of solving hard problems, akin to Grothendieck's program in algebraic geometry. We envision developing a theory of \emph{compositional graph algorithms through universal properties} as a step along this way.
A long-term goal of such a project would be to see whether one could recover \emph{algorithms} from \emph{problem statements}, if the latter are suitably formulated.

This paper is structured as follows: We describe the necessary background on graphs and (lexicographic) traversals in Section~\ref{sec:bg}, and present an alternate formulation of traversals in Section~\ref{sec:alternate} that the remaining work will build on. We go on to describe the categories of edge-ordered graphs on which our work is founded in Section~\ref{sec:graphs}, and
present the two categories of least path trees and universal constructions associated with lexicographic breadth-first and depth-first traversals respectively in Section~\ref{sec:least dfs trees} and Section~\ref{sec:least bfs trees}. The categories of transitively closed least path trees and their universal constructions are presented in Section~\ref{sec:traversals_as_transitive_lex_graphs} and Section~\ref{sec:short-lex transitive closure}. Finally, we summarise our categorical construction of lexicographic breadth-first and depth-first traversals by universal means in Section~\ref{sec:putting}, and end with some concluding remarks in Section~\ref{sec:discussion}.

Note that in the interests of space, we postpone all proofs until two appendices: one for the pre-categorical development (Sections \ref{sec:bg} and \ref{sec:alternate}), and another for category theory (Sections \ref{sec:graphs} through Section \ref{sec:putting}).

\section{Background and Preliminaries}\label{sec:bg}

Our objective is to give a categorical formulation of the canonical breadth-first and depth-first traversals of directed edge-ordered graphs. But first, in Sections \ref{sec:bg} and \ref{sec:alternate}, we give a ``pre-categorical'' account of these traversals in terms of extremal-path trees, which motivates the subsequent categorical treatment in Sections \ref{sec:graphs} through \ref{sec:short-lex transitive closure}.
The main results of Sections \ref{sec:bg} and \ref{sec:alternate} can be found in \cite{TK95}, but we provide our own presentation. 

\begin{definition}
  A (directed) graph is a pair $(V, {\to})$ where $V$ is the
  set of \emph{vertices} and $\to \subseteq V \times V$ is the \emph{edge
  relation}.
\end{definition}

\begin{definition}
  The \emph{neighborhood} of a vertex $v \in V$, denoted $N(v)$, is the set of
\emph{outgoing} edges of $v$; that is, $N(v) = {\to} \cap \{(v,x) \mid x \in
V\}$. A graph is \emph{pointed} when it has a distinguished vertex $v_0$.
\end{definition}

\begin{definition}
  A \emph{path} in a graph is a finite sequence of vertices $v_1 \to v_2 \to
\cdots \to v_n$; say that $v_1$ is the \emph{source} of the path, and $v_n$ the \emph{target}. We will write $u \rightsquigarrow v$ to say that there is a path
from $u$ to $v$, and $u \xrightsquigarrow{\pi} v$ about a specific such path
named $\pi$. We say that a path is \emph{proper} when no vertex occurs more
than once in the path, i.e., when $v_i = v_j$ implies $i=j$ for all $i,j \in I$. We use $\varepsilon$ to denote the unique \emph{empty} (proper) path in each neighborhood.

\end{definition}

\begin{definition}
  
Two paths are \emph{co-initial} in case they share a source, and \emph{co-final} in case they share a target. If the source of $\sigma$ agrees with the target of $\pi$, we can \emph{compose} them to obtain $\pi \sigma$.   For two co-initial paths $\pi$ and $\sigma$, we say $\pi \sqsubset \sigma$ (read: $\sigma$ \emph{extends} $\pi$) in case $\pi$ is an initial subsequence (i.e., a prefix) of $\sigma$. 
\end{definition}

\begin{remark}
Notice that $u \rightsquigarrow v$ iff there exists a proper path from $u$ to $v$, as repetitions in a path can always be deleted.
\end{remark}

\begin{definition}
 A pointed graph $(G,v_0)$ is \emph{connected} if for every vertex $v$, there exists a path $v_0 \rightsquigarrow v$.
\end{definition}

\subsection{Graph searching and traversals}
By a \emph{graph search}, we mean an algorithm for visiting all the vertices in a connected graph, starting at a given source. Two of the most important types of graph search are \emph{depth-first} and \emph{breadth-first} :

\begin{definition}[Depth-first search]\label{def:depth-first search}
Given as input a finite connected graph $G = (V,E)$, initialize a list $L=()$, and a stack $S=(v_0)$ for some vertex $v_0$. While $S$ is nonempty, pop the first element $v$ from $S$. If $v$ is already contained in $L$, go back to the start of the loop. Otherwise, let $L = (L,v)$, and push every vertex in $\partial v$ onto $S$, where $\partial v = N(v) \setminus L$.
\end{definition}

\begin{definition}[Breadth-first search]\label{def:breadth-first search}
 Given as input a finite connected graph $G = (V,E)$, initialize a list $L=()$, and a queue $Q=(v_0)$ for some vertex $v_0$. While $S$ is nonempty, dequeue the front element $v$ from $Q$. If $v$ is already contained in $L$, go back to the start of the loop. Otherwise, let $L = (L,v)$, and enqueue every vertex in $\partial v$ to the back of $Q$, where $\partial v = N(v) \setminus L$.
\end{definition}
Note that depth-first and breadth-first search are nondeterministic, in the sense that we do not specify which order to add vertices from $\partial v$ in. Moreover, vertices may occur more than once in $S$ or $Q$, as the same vertex may be added as a neighbor multiple times. However, vertices may not occur more than once in $L$; when depth-first or breadth-first search is complete, $L$ lists all the vertices in $G$.

As we vary over all the nondeterministic traces of depth-first search or breadth-first search over a graph $G$, the resulting orderings are depth-first or breadth-first traversals.

\begin{definition}\label{def:DFT and BFT}
 A linear ordering $<$ of the vertices of a graph $G$ is a \emph{depth-first traversal}, respectively \emph{breadth-first traversal} of $G$ if there exists some computation of depth-first search, respectively breadth-first search, on $G$ according to which vertices are added to $L$ in exactly the order $<$.
\end{definition}
Both depth- and breadth-first traversals have important characterizations in the first-order language of ordered graphs; these are due to Corneil and Krueger \cite{CK08}, who state them in the special case of undirected graphs. Here, we only need to know that they are necessary.

\begin{lemma}\label{lem:FO characterization of DFT}
If $\cdot < \cdot$ is a depth-first traversal of a finite, connected graph $G$, and if there are vertices $u < v < w$ such that $u \to w$, then there exists some $v'$ such that $u \le v' < v$ and $v' \to v$.
\end{lemma}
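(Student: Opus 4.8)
The plan is to argue directly from a concrete depth-first search computation witnessing the traversal $<$ (one exists by Definition~\ref{def:DFT and BFT}), exploiting the last-in-first-out discipline of the stack $S$. I will say a vertex is \emph{processed} in the loop iteration in which it is popped from $S$ while not yet in $L$ and is thereby appended to $L$; since the search is over a finite connected graph it terminates having processed every vertex exactly once, and $<$ records the order in which vertices enter $L$, so $x < y$ is exactly the statement that $x$ is processed before $y$.

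First I would locate the vertex $v'$. Because $u < v$, the vertex $v$ is not the source $v_0$, so the particular copy of $v$ popped at the moment $v$ is appended to $L$ must have been pushed onto $S$ during the processing of some vertex $v'$; at that instant $v \in \partial v' = N(v') \setminus L$, so in particular $v' \to v$, and $v'$ was processed before $v$, whence $v' < v$. It then remains only to show $u \le v'$, which I would do by contradiction, assuming $v' < u$.

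Granting $v' < u$, the processing of $v'$ precedes the processing of $u$, which precedes the moment $v$ enters $L$, since $v' < u < v$. I would then follow two designated stack entries through the computation: the copy $v^{\ast}$ of $v$ pushed by $v'$, which by the stack discipline remains in $S$ from the processing of $v'$ until it is popped as $v$ enters $L$; and a copy $w^{\ast}$ of $w$ pushed during the processing of $u$ --- here one checks $w \notin L$ when $u$ is processed (as $w$ enters $L$ after $v$, hence after $u$), so $w$ genuinely lies in $\partial u$ and gets pushed. Since $v^{\ast}$ was already strictly below $u$ at the moment $u$ was popped, the freshly pushed $w^{\ast}$ lies strictly above $v^{\ast}$; hence $w^{\ast}$ must be popped before $v^{\ast}$, that is, before $v$ enters $L$. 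But popping $w^{\ast}$ while $w \notin L$ forces $w$ into $L$ before $v$, contradicting $v < w$. This contradiction gives $u \le v'$, which together with $v' < v$ and $v' \to v$ completes the proof.

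The only genuinely delicate point is the bookkeeping forced by the fact that a single vertex may appear several times on $S$: one has to reason about the specific copies $v^{\ast}$ and $w^{\ast}$ rather than ``the'' occurrence of $v$ or $w$, and one has to verify membership in $L$ at exactly the right instants (that $v \notin L$ throughout the relevant interval, and that $w \notin L$ both when $u$ is processed and when $w^{\ast}$ is popped). Each of these is a direct consequence of the single ordering fact that $v$ enters $L$ after $u$ and $w$ enters $L$ after $v$, combined with the invariant that the search only ever pushes onto the top of $S$; so I expect the formal argument to be brief once these conventions are pinned down.
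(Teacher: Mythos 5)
Your proof is correct, and it is essentially the paper's argument: both fix a concrete DFS computation witnessing $<$ and exploit the LIFO discipline by tracking specific stack copies of $v$ and of the copy of $w$ pushed while processing $u$. The only cosmetic difference is that you take $v'$ to be the pusher of the copy of $v$ that is eventually popped (i.e.\ the depth-first predecessor) and derive a contradiction from $v' < u$, whereas the paper directly extracts a push of $v$ occurring at or after the stage at which $u$ is processed; the content is the same.
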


\begin{lemma}\label{lem:FO characterization of BFT}
If $\cdot < \cdot$ is a breadth-first traversal of a finite, connected graph $G$, and if there are vertices $u < v < w$ such that $u \to w$, then there exists some $v'$ such that $v' \le u < v$ and $v' \to v$.
\end{lemma}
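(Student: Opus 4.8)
The plan is to track the FIFO queue $Q$ through an arbitrary run of breadth-first search (Definition~\ref{def:breadth-first search}) that produces the given traversal ${<}$, and to exploit the fact that queue cells are dequeued in exactly the order in which they are enqueued.

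I would fix such a run and, for each vertex $x$, write $t(x)$ for the loop iteration in which $x$ is dequeued and appended to $L$; by Definition~\ref{def:DFT and BFT}, $x \mapsto t(x)$ is an isomorphism of linear orders from $(V,{<})$ onto $\{1,\dots,|V|\}$, so in particular $v_0$ is ${<}$-least. For each $x \neq v_0$ I would let $p(x)$ be the vertex processed during the iteration in which $x$ is \emph{first} enqueued; then $p(x) \to x$, and since an enqueued cell can only be dequeued in a later iteration, $p(x)$ is appended to $L$ strictly before $x$ is, so $p(x) < x$. The one point needing care --- call it the \emph{FIFO property} --- is that the sequence of all enqueue events and the sequence of all dequeue events coincide as sequences of queue cells; consequently the first dequeue of $x$, which is precisely the iteration $t(x)$ appending $x$ to $L$ (since $x \notin L$ beforehand), is the dequeue of the cell produced by the first enqueue of $x$. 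From this I would deduce that, for $x, y \neq v_0$, $t(x) < t(y)$ holds iff the first enqueue of $x$ precedes the first enqueue of $y$ in the global enqueue sequence, and that every enqueue from an earlier iteration precedes every enqueue from a later one.

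With this in hand the lemma is short. Given $u < v < w$ with $u \to w$: since $t(u) < t(w)$, the vertex $w$ is not in $L$ when $u$ is processed, so $w \in \partial u$ and $w$ is enqueued during iteration $t(u)$; hence $w$ is first enqueued during some iteration $s \le t(u)$. Since $u < v < w$ forces $v \neq v_0 \neq w$ and $t(v) < t(w)$, the first enqueue of $v$ precedes the first enqueue of $w$; as $v$ is first enqueued during iteration $t(p(v))$, this excludes $t(p(v)) > s$, whence $t(p(v)) \le s \le t(u)$ and therefore $p(v) \le u$. Setting $v' = p(v)$ gives $v' \to v$ and $v' \le u < v$, as required.

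I expect the main obstacle to be stating and verifying the FIFO property at the right level of precision --- namely that occurrences of a fixed vertex are dequeued in enqueue order and that its earliest-dequeued occurrence is the one triggering its insertion into $L$ --- after which the argument is a direct unwinding of the definitions. The companion Lemma~\ref{lem:FO characterization of DFT} is established by an analogous stack-based analysis over $S$; there the LIFO discipline (and the attendant need to reason about the specific pushed occurrence that is popped, rather than the first one) is what accounts for the conclusion $u \le v'$ in place of $v' \le u$ here.
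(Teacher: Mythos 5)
Your proposal is correct and takes essentially the same route as the paper's proof: fix a run of breadth-first search producing $<$, observe that $w$ is (first) enqueued no later than the iteration that processes $u$, and use the FIFO discipline of $Q$ to conclude that the vertex whose processing first enqueues $v$ --- your $p(v)$, the paper's $v_{\ell'}$ --- is processed at an iteration $\le$ that of $u$, giving the witness $v'$ with $v' \le u$ and $v' \to v$. (One cosmetic nit: $t$ is not onto $\{1,\dots,|V|\}$, since iterations that dequeue an already-visited vertex append nothing to $L$; the argument only needs that $t$ is order-preserving, so this does not affect correctness.)
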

These characterizations motivate the following definitions.

\newcommand{\dfp}{\mathit{dfp}}

\begin{definition}
 Let $G$ be a finite connected graph and $\cdot < \cdot$ be a depth-first traversal of $G$. Then for any vertex $v$, define the \emph{depth-first predecessor} $\dfp(v)$ to be the greatest $u < v$ such that $u \to v$, if $v$ is not the minimal vertex. For the minimal vertex $v$, let $\dfp(v) = v$.
\end{definition}

\newcommand{\bfp}{\mathit{bfp}}

\begin{definition}
 Let $G$ be a finite connected graph and $\cdot < \cdot$ be a depth-first traversal of $G$. Then for any vertex $v$, define the \emph{breadth-first predecessor} $\bfp(v)$ to be the least $u < v$ such that $u \to v$, if $v$ is not the minimal vertex. For the minimal vertex $v$, let $\bfp(v) = v$.
\end{definition}
Depth- and breadth-first predecessors allow for  elegant restatements of Lemmata \ref{lem:FO characterization of DFT} and \ref{lem:FO characterization of BFT}: if $\cdot < \cdot$ is a depth-first traversal of $G$, then for any $v < w$, if $\dfp(w) < v$, then $\dfp(w) \le \dfp(v)$. Similarly, if $\cdot < \cdot$ is a breadth-first traversal of $G$ then for any $v < w$, if $\bfp(w) < v$, then $\bfp(v) \le \bfp(w)$.
In fact the second condition is equivalent to saying that $\bfp$ is weakly monotone, viz., $v \le w \implies \bfp(v) \le \bfp(w)$.

For the next definition, notice that the orbits $\{v,\dfp(v),\dfp^2(v),\dots\}$ and 
$\{v,\bfp(v),\bfp^2(v),\dots\}$ of any vertex $v$ are finite, and their 
\emph{reversals} are paths from the least vertex $v_0$ to $v$.

\begin{definition}\label{def:df and bf canonical paths}
 Let $G$ be a finite connected graph, $\cdot < \cdot$ be a depth-first traversal of $G$, and $v_0$ be the $<$-least vertex. Then the \emph{canonical df-path} from $v_0$ to any vertex $v$ is the path $v_0 \to v_1\to \dots \to v_{\ell - 1}$ such that $v_{\ell -1} = v$, and for every $0 \le i < \ell-1$, $\dfp(v_{i+1}) = v_i$.
 
 Analogously, the \emph{canonical bf-path} from $v_0$ to any vertex $v$ is the path $v_0 \to v_1\to \dots \to v_{\ell - 1}$ such that $v_{\ell -1} = v$, and for every $0 \le i < \ell-1$, $\bfp(v_{i+1}) = v_i$.
\end{definition}

\subsection{Lexicographic searching}\label{subsec:lexdfs}

The objective of this paper is to show that the depth-first and breadth-first traversals of a graph is \emph{canonical} in some precise categorical sense. Of course, this cannot be true at face value: in a complete graph, every ordering of the vertices is both a breadth-first and a depth-first traversal, and none of them is canonical.

As we remarked in the introduction, an edge-ordering is precisely the amount of additional structure we need. Over such graphs, we can make searching deterministic, as we now show.

\begin{definition}\label{def:EO graphs}
  A \emph{finite edge ordered graph} is a finite graph where each neighborhood is equipped with a (strict) linear order $\cdot \edgel \cdot$.
\end{definition}

\begin{definition}[Lexicographic depth-first search]
Let $G$ be a finite, connected, pointed, edge-ordered graph with distinguished vertex $v_0$. Initialize a list $L=()$ and a stack $S=(v_0)$. While $S$ is nonempty, pop the first element $v$ from $S$. If $v$ is already contained in $L$, go back to the start of the loop. Otherwise, let $L = (L,v)$, and push every vertex in $\partial v$ onto $S$ in \emph{reverse} $\edgel$-order, where $\partial v$ is the set of neighbors of $v$ not in $L$.
\end{definition}

\begin{remark}
We push vertices from $\partial v$ onto $S$ in reverse $\edgel$-order, so that the least vertices from $\partial v$ end up on top of $S$.
\end{remark}

\begin{definition}[Lexicographic breadth-first search]
Let $G$ be a finite, connected, pointed, edge-ordered graph with distinguished vertex $v_0$. Initialize a list $L=()$ and a queue $Q=(v_0)$. While $Q$ is nonempty, dequeue first element $v$ from $S$. If $v$ is already contained in $L$, go back to the start of the loop. Otherwise, let $L = (L,v)$, and enqueue every vertex from $\partial v$ onto $Q$ in $\edgel$-order, where $\partial v$ is the set of neighbors of $v$ not in $L$.
\end{definition}

\begin{definition}\label{def:lex bf and df traversals}
 The depth-first traversal computed by lexicographic depth-first search on a finite, connected, pointed, edge-ordered graph $G$, is its \emph{lexicographic depth-first traversal}. Similarly, the breadth-first traversal computed by lexicographic breadth-first search is its \emph{lexicographic breadth-first traversal}.
\end{definition}

\begin{remark}
The usage of \emph{lexicographic (depth, breadth)-first (search, traversal)} is ambiguous in the literature. Here, we use it in the same way as Delatorre and Kruskal \cite{TK95,TK01}; however, the \emph{lexicographic breadth-first search} of Rose and Tarjan \cite{RT75} and the analogous \emph{depth-first} version of Corneil and Krueger \cite{CK08} are different. The latter are further refinements of breadth-first search and depth-first search over graphs, not a ``determinization'' by an edge-ordering.
\end{remark}

\section{An alternate formulation}\label{sec:alternate}

In this section, we give different characterizations of the lexicographic depth-first and breadth-first traversals, independent of any graph search, which suggest the category-theoretic treatment that occupies us for the rest of the paper.

\begin{definition}\label{def:path_order}
   Fix an edge-ordered graph $G$. Define the \emph{lexicographic path relation} $\cdot \prec \cdot$ on any proper co-initial paths $\pi$ and $\sigma$ in $G$ as follows:
  \begin{enumerate}[(i)]
    \item\label{def:extends sqsubet} if $\pi \sqsubset \sigma$, then $\pi \prec \sigma$, similarly if $\sigma \sqsubset \pi$, then $\sigma \prec \pi$; otherwise,
    \item\label{def:compares first different edge} let $\zeta$ be the longest common prefix of $\pi$ and $\sigma$, let $u$ be the target of $\zeta$, and let $v_1$ and $v_2$ be the first vertices immediately following $\zeta$ in $\pi$ and $\sigma$ respectively. Order $\pi \prec \sigma \iff u \to v_1 \edgel u \to v_2$.
  \end{enumerate}
\end{definition}

\begin{remark}\label{rem:path_order props}
 The following properties hold of $\prec$:
 \begin{enumerate}
     \item\label{rem:empty path least} The empty path $(v)$ is least among all paths from $v$.
     
     \item\label{rem:right mult} If $\pi \prec \sigma$ and $\pi \not \sqsubseteq \sigma$, then for any $\alpha$ and $\beta$, $\pi \alpha \prec \sigma \beta$.
     
     \item\label{rem:left mult} If $\pi \prec \sigma$, then $\alpha \pi \prec \alpha \sigma$.
     
     \item\label{rem:left cancellation} If $\alpha \pi \prec \alpha \sigma$, then $\pi \prec \sigma$.
 \end{enumerate}
\end{remark}
Since the set of proper paths is finite, any subset has a $\prec$-least element, which justifies the next definitions.

\begin{definition}\label{def:lex min path}
  For vertices $u,v$ in a finite edge-ordered graph, let $
  \min(u \tp v)$ be the lexicographically least proper path from $u$ to $v$.
\end{definition}

\begin{definition}\label{def:slex min path}
  For vertices $u,v$ in a finite edge-ordered graph, let $
  \min^s(u \tp v)$ be the lexicographically least \emph{shortest} path from $u$ to $v$.
\end{definition}
In fact, it will be convenient to define the following relation, the \emph{shortlex} order:
\begin{definition}
 Let $\pi \prec^s \sigma$ mean that either $|\pi| < |\sigma|$, or $|\pi| = |\sigma|$ and $\pi \prec \sigma$.
\end{definition}
In this case, $\min^s(u \tp v)$ is simply, the $\prec^s$-least path $u \tp v$.

In the remainder of this section, we fix a finite, connected, pointed, edge-ordered graph $G$ with distinguished element $v_0$. Reserve the symbol $\cdot \edgel \cdot$ for the given ordering on co-initial edges and $\cdot \prec \cdot$ for the induced lexicographic ordering on co-initial paths. Let $\cdot \lexdft \cdot$ and $\cdot \lexbft \cdot$ denote the lexicographic depth-first and breadth-first traversals respectively. Let $\mathfrak{P}v$ and $\mathfrak{Q}v$ denote the canonical df- and bf-paths from $v_0$ to $v$ with respect to $\cdot \lexdft \cdot$ and $\cdot \lexbft \cdot$ respectively.

Our goal is to prove the following ``alternate characterizations'' of the lexicographic depth- and breadth-first traversals:
$$ u \lexdft v \iff \mathfrak{P}u \prec \mathfrak{P}v $$
$$ u \lexbft v \iff \mathfrak{Q}u \prec^s \mathfrak{Q}v.$$
In the breadth-first case,
\newcommand{\lexbfte}{\le_B}

\begin{lemma}\label{lem:bf canonical paths are minimal}
If $u \lexbfte v$, then for any path $\pi : v_0 \tp v$, $|\mathfrak{Q}u| \le |\pi|$. In addition, if $|\mathfrak{Q}u| = |\pi|$, then $\mathfrak{Q}u \preceq \pi$.
\end{lemma}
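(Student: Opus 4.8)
The plan is to reduce the lemma to two facts about the canonical bf-paths and prove each by induction. Throughout I use the recursive description $\mathfrak{Q}v_0 = (v_0)$ and $\mathfrak{Q}v = \mathfrak{Q}(\bfp v)\cdot(\bfp(v)\to v)$ for $v\neq v_0$ read off Definition~\ref{def:df and bf canonical paths}, the facts that $\bfp(v)<_B v$ and that $\bfp(v)$ is the $<_B$-least among in-neighbours of $v$ that precede $v$, and the weak monotonicity of $\bfp$ from Lemma~\ref{lem:FO characterization of BFT}. The two facts are: (A) if $u\le_B v$ then $\mathfrak{Q}u \preceq^s \mathfrak{Q}v$ (writing $\preceq^s$ for the reflexive closure of $\prec^s$); and (B) for every path $\pi : v_0\tp v$ one has $\mathfrak{Q}v\preceq^s\pi$ — equivalently $\mathfrak{Q}v = \min^s(v_0\tp v)$. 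Granting these, the lemma is immediate: $|\mathfrak{Q}u|\le|\mathfrak{Q}v|\le|\pi|$ by (A) then (B), and if $|\mathfrak{Q}u|=|\pi|$ then all three lengths coincide, so (A) gives $\mathfrak{Q}u\preceq\mathfrak{Q}v$, (B) gives $\mathfrak{Q}v\preceq\pi$, and transitivity of $\preceq$ gives $\mathfrak{Q}u\preceq\pi$.

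Fact (A) I would prove by induction on $v$ along $<_B$. The cases $u=v$ and $v=v_0$ are trivial, and if $u=v_0<_B v$ then $|\mathfrak{Q}u|=1<2\le|\mathfrak{Q}v|$, so $\mathfrak{Q}u\prec^s\mathfrak{Q}v$. In the remaining case $v_0<_B u<_B v$, weak monotonicity gives $\bfp(u)\le_B\bfp(v)<_B v$, so the induction hypothesis applies to the pair $\bfp(u)\le_B\bfp(v)$ and yields $\mathfrak{Q}(\bfp u)\preceq^s\mathfrak{Q}(\bfp v)$; one then appends the edges $\bfp(u)\to u$ and $\bfp(v)\to v$ and transports the comparison past the appended edge using Remark~\ref{rem:path_order props}(2),(3) together with simple length bookkeeping, splitting on whether the two shorter paths differ in length, have equal length but differ, or coincide. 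The only non-formal input occurs in the last split, where $\mathfrak{Q}(\bfp u)=\mathfrak{Q}(\bfp v)$, hence $\bfp(u)=\bfp(v)=:w$: there Remark~\ref{rem:path_order props}(3) reduces the claim to $w\to u\edgel w\to v$. Fact (B) is then an induction along the prefixes of $\pi$ (which may be taken proper): one shows $\mathfrak{Q}(w_i)\preceq^s w_0 w_1\cdots w_i$ for each $i$, the step splitting on whether $w_i<_B w_{i+1}$ — so that $\bfp(w_{i+1})\le_B w_i$ and (A) gives $\mathfrak{Q}(\bfp w_{i+1})\preceq^s\mathfrak{Q}(w_i)$ — or $w_{i+1}<_B w_i$ — so that (A) gives $\mathfrak{Q}(w_{i+1})\preceq^s\mathfrak{Q}(w_i)$ and the extra vertex merely lengthens the prefix — and again transporting past the last edge with Remark~\ref{rem:path_order props}.

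The main obstacle is the edge-ordering claim invoked at the end of the proof of (A): that $\bfp(u)=\bfp(v)=w$ together with $u<_B v$ forces $w\to u\edgel w\to v$. This is precisely where the \emph{lexicographic} determinisation, rather than the bare first-order characterisation of breadth-first traversals, is needed, so I would argue it directly from the lexicographic breadth-first search algorithm. Because $w$ is the $<_B$-least in-neighbour of $u$ preceding $u$, and symmetrically for $v$, neither $u$ nor $v$ is enqueued before $w$ is dequeued and processed; at that moment $u,v\in\partial w$, so both are enqueued while $w$ is processed, and in $\edgel$-order — hence the first queue-occurrence of $u$ precedes that of $v$ iff $w\to u\edgel w\to v$. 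Since a vertex is appended to $L$ exactly when its first queue-occurrence is dequeued, and the queue is processed first-in-first-out, the first occurrence of $u$ is dequeued before that of $v$ iff $u<_B v$; comparing the two equivalences, $u<_B v$ forces $w\to u\edgel w\to v$. Apart from this, the remaining care is entirely routine: the repeated appeals to Remark~\ref{rem:path_order props}(2),(3) to see that appending a last edge preserves $\prec$ (respectively $\prec^s$), and the reduction of (B) to proper paths (a non-proper path has a strictly shorter proper subpath with the same endpoints, hence a $\prec^s$-smaller one).
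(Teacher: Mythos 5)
Your proposal is correct, but it reorganizes the argument relative to the paper. The paper proves the two claims of Lemma~\ref{lem:bf canonical paths are minimal} head-on: the length bound by an induction along the \emph{reversed} path $\pi$, comparing $\mathit{bfp}^i(u)$ with the $i$-th vertex from the end of $\pi$ using weak monotonicity of $\mathit{bfp}$; and the lexicographic claim by a least-counterexample argument (take $u$ minimal in $<_B$ among counterexamples, delete the last vertices of $\mathfrak{Q}u$ and $\pi$, and derive a contradiction), with Corollary~\ref{cor:bf canonical paths are minimal} then deduced from the lemma. You invert this order: your facts (A) and (B) are essentially the content of the corollary, proved directly --- (A) by induction along $<_B$, (B) by induction along prefixes of $\pi$ using (A) --- and the lemma is read off afterwards; this is logically sound since neither induction uses the lemma. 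Both routes ultimately rest on the same two ingredients: weak monotonicity of $\mathit{bfp}$, and the fact that for vertices first enqueued while the same predecessor is processed, the traversal order agrees with the edge order $\triangleleft$ at that predecessor. The paper compresses the latter into the single assertion ``it must be the case that $\mathfrak{Q}u' = \pi'$ and $v <_B u$'' at the end of its proof, whereas you correctly identify it as the one genuinely non-formal step and prove it from the queue discipline, in the spirit of Lemma~\ref{lem:characterization of preds}. What your route buys is a fully explicit, self-contained argument that delivers Corollary~\ref{cor:bf canonical paths are minimal} in the same stroke; the cost is a bit more bookkeeping (the repeated case splits when appending the final edges, handled by Remark~\ref{rem:path_order props}), where the paper's contradiction-based argument is shorter on the page precisely because it leaves the tie-breaking step implicit.
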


\begin{corollary}\label{cor:bf canonical paths are minimal}
The following are all true of the operator $\mathfrak{Q}$:
\begin{enumerate}
    \item For any vertex $v$ of $G$, $\mathfrak{Q}v = \min^s(v_0 \tp v)$.
    \item $u \lexbfte v$ iff $\mathfrak{Q}u \preceq^s \mathfrak{Q}v$.
\end{enumerate}
\end{corollary}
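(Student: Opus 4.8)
The plan is to read the corollary off Lemma~\ref{lem:bf canonical paths are minimal}, which already does the substantive work; what remains is bookkeeping between the path orders $\prec,\preceq$ and their shortlex refinements $\prec^s,\preceq^s$. I will use two elementary facts. First, $\prec$, and hence $\prec^s$, is a strict linear order on the finite set of proper co-initial paths (shortlex compares length first, and on paths of a common length the comparison is made by $\edgel$, which is linear), so $\preceq^s$ is a partial order there and every subset has a unique $\prec^s$-least element. Second, each $\mathfrak{Q}v$ is a \emph{proper} path from $v_0$ to $v$: by Definition~\ref{def:df and bf canonical paths} its vertices $v_0,v_1,\dots,v_{\ell-1}=v$ satisfy $\bfp(v_{i+1})=v_i$, so they are strictly $\lexbft$-increasing and hence pairwise distinct; in particular $w\mapsto\mathfrak{Q}w$ is injective, since the target of $\mathfrak{Q}w$ is $w$.

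For part (1), I would instantiate Lemma~\ref{lem:bf canonical paths are minimal} with $u:=v$, which is legitimate because $\lexbfte$ is reflexive. This yields, for every proper path $\pi:v_0\tp v$, that $|\mathfrak{Q}v|\le|\pi|$ and that $\mathfrak{Q}v\preceq\pi$ whenever the lengths agree; unwinding the definition of $\prec^s$, these two statements together say exactly $\mathfrak{Q}v\preceq^s\pi$. Since $\mathfrak{Q}v$ is itself a proper path $v_0\tp v$ and shortlex totally orders such paths, $\mathfrak{Q}v$ is the unique $\prec^s$-least one, i.e.\ $\mathfrak{Q}v=\min^s(v_0\tp v)$.

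For part (2), the forward direction is immediate: given $u\lexbfte v$, apply Lemma~\ref{lem:bf canonical paths are minimal} at the particular path $\pi:=\mathfrak{Q}v$ to get $|\mathfrak{Q}u|\le|\mathfrak{Q}v|$, with $\mathfrak{Q}u\preceq\mathfrak{Q}v$ in case of equality, i.e.\ $\mathfrak{Q}u\preceq^s\mathfrak{Q}v$. For the converse, suppose $\mathfrak{Q}u\preceq^s\mathfrak{Q}v$; I would show $u\lexbfte v$ by contradiction. If this fails, linearity of $\lexbfte$ gives $v\lexbft u$, so $v\lexbfte u$ and $u\ne v$, whence the forward direction yields $\mathfrak{Q}v\preceq^s\mathfrak{Q}u$. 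Combined with the hypothesis and antisymmetry of $\preceq^s$ on proper paths, this forces $\mathfrak{Q}u=\mathfrak{Q}v$, which is impossible since $\mathfrak{Q}$ is injective and $u\ne v$.

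I do not anticipate a genuine obstacle: the content is entirely in Lemma~\ref{lem:bf canonical paths are minimal}, and the remaining steps are formal. The only points needing a little care are checking that shortlex is indeed antisymmetric and total on the relevant finite set of proper co-initial paths --- used both when identifying $\mathfrak{Q}v$ with the shortlex minimum and in the contradiction step of part (2) --- and being careful to quantify over \emph{proper} paths in part (1), which is harmless since $\mathfrak{Q}v$ and $\min^s(v_0\tp v)$ are themselves proper.
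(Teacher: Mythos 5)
Your proposal is correct and follows essentially the same route as the paper: part (1) by instantiating Lemma~\ref{lem:bf canonical paths are minimal} with $u=v$ and comparing against $\min^s(v_0 \tp v)$, and part (2) by taking $\pi = \mathfrak{Q}v$ for the forward direction and deducing the converse from totality of $\lexbfte$ (your argument by contradiction is just the paper's contrapositive). The extra checks you supply (properness of $\mathfrak{Q}v$, injectivity of $\mathfrak{Q}$, antisymmetry of $\preceq^s$) are details the paper leaves implicit, and they are verified correctly.
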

Corollary \ref{cor:bf canonical paths are minimal} gives us our desired alternate characterization of the lexicographic breadth-first traversal, viz., $u \lexbft v \iff \mathfrak{Q}u \prec^s \mathfrak{Q}v$. In the depth-first case,

\begin{definition}\label{def:prec D}
Define $v \prec_D w$ in case $\min(v_0 \tp v) \prec \min(v_0 \tp w)$.
\end{definition} 

\begin{lemma}\label{lem:canonical paths are minimal}
For any vertex $v$ of $G$, $\mathfrak{P}v = \min(v_0 \tp v)$.
\end{lemma}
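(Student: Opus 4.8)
The plan is to show that the canonical df-path $\mathfrak{P}v$ from $v_0$ to $v$ coincides with $\min(v_0 \tp v)$ by induction on the length of $\mathfrak{P}v$, equivalently by induction on $v$ along the $\dfp$-orbit structure. The base case is $v = v_0$: here $\mathfrak{P}v_0 = \varepsilon$ (the empty path at $v_0$), and by Remark \ref{rem:path_order props}\eqref{rem:empty path least} this is $\prec$-least among all paths from $v_0$, hence equals $\min(v_0 \tp v_0)$. For the inductive step, write $u = \dfp(v)$, so that $\mathfrak{P}v = \mathfrak{P}u \cdot (u \to v)$ by Definition \ref{def:df and bf canonical paths}, and by the induction hypothesis $\mathfrak{P}u = \min(v_0 \tp u)$. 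I must show two things: first that $\mathfrak{P}v$ is a proper path (no repeats), and second that no proper path $\pi : v_0 \tp v$ satisfies $\pi \prec \mathfrak{P}v$.

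For properness: since $\mathfrak{P}u$ is proper and its vertices are exactly $\{v_0, \dfp(v), \dfp^2(v), \dots\}$ traversed in reverse, all of which are strictly $\lexdft$-less than $v$ (because $\dfp$ strictly decreases except at the root, and $u = \dfp(v) \lexdft v$), appending $v$ — which does not already appear — keeps it proper. The substantive part is minimality. Suppose toward a contradiction that some proper $\pi : v_0 \tp v$ has $\pi \prec \mathfrak{P}v = \mathfrak{P}u \cdot (u\to v)$. Let $w$ be the penultimate vertex of $\pi$, so $\pi = \pi' \cdot (w \to v)$ with $\pi' : v_0 \tp w$ proper; note $w \lexdft v$ since all non-final vertices of the proper path $\pi$ reach $v$ and a depth-first traversal visits $w$ before $v$ whenever $w$ lies on a $v_0$–$v$ path avoiding $v$ — more carefully, I will need that $w \to v$ and $w \lexdft v$, so by definition of $\dfp$ we have $w \le \dfp(v) = u$. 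I would then split into the cases $w = u$ and $w \lexdft u$. If $w = u$, then $\pi = \pi' \cdot (u \to v)$ and $\pi \prec \mathfrak{P}u \cdot(u\to v)$ forces $\pi' \prec \mathfrak{P}u$ by left-cancellation-type reasoning (Remark \ref{rem:path_order props}\eqref{rem:left mult}/\eqref{rem:left cancellation}, comparing the common suffix), contradicting $\mathfrak{P}u = \min(v_0 \tp u)$. If $w \lexdft u$, I will use the elegant restatement of Lemma \ref{lem:FO characterization of DFT}: since $\dfp(v) = u$ and $w \lexdft u$... wait, that gives $\dfp(v) \le \dfp(w)$, i.e. $u \le \dfp(w)$; but $\dfp(w) \lexdft w \lexdft u$, a contradiction. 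So in fact $w = u$ is forced, and we are done.

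The main obstacle I anticipate is the bookkeeping in the case $w = u$: showing rigorously that $\pi' \cdot (u \to v) \prec \mathfrak{P}u \cdot (u \to v)$ implies $\pi' \prec \mathfrak{P}u$. This is not literally one of the listed properties of $\prec$ (those concern prefixing, not common suffixes), so I will argue directly from Definition \ref{def:path_order}: the two paths share the final edge $u \to v$, and since $v$ appears only at the end of each (both being proper with target $v$, and $v_0 \ne v$ in the nontrivial case), the longest common prefix and the first point of divergence of $\pi' \cdot (u\to v)$ and $\mathfrak{P}u\cdot(u\to v)$ occur strictly before that final edge, hence are determined entirely by $\pi'$ versus $\mathfrak{P}u$; the "extends" case is handled by noting neither can properly extend the other once they have the same length-incomparable structure, or by a direct prefix check. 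A secondary subtlety is justifying $w \lexdft v$ and $w \to v \Rightarrow w \le \dfp(v)$ cleanly; this follows because $\dfp(v)$ is by definition the $\lexdft$-greatest in-neighbor of $v$ below $v$, so it suffices to know $w \lexdft v$, which holds since $w$ precedes $v$ on a proper path to $v$ and hence is visited earlier by any (in particular the lexicographic) depth-first search.
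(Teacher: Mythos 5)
Your overall strategy---induction along the $\mathit{dfp}$-orbit, peeling off the last edge of $\mathfrak{P}v$---differs from the paper's proof (which takes a $\lexdft$-least counterexample and compares $\mathfrak{P}v$ with $\min(v_0 \tp v)$ at their point of divergence), and your case $w = u$ is handled correctly, including the suffix subtlety. However, there are two genuine gaps, both located in the case $w \neq u$, which is exactly where the difficulty of the lemma is concentrated. First, your justification that the penultimate vertex $w$ of $\pi$ satisfies $w \lexdft v$ is false as a general principle: lying on a proper path from $v_0$ to $v$ does not imply being visited before $v$ by a depth-first search of a directed graph (take $v_0 \to a$, $v_0 \to b$, $b \to a$ with $v_0 \to a \edgel v_0 \to b$: then $b$ lies on a proper path to $a$ but is visited after $a$). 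Coping with vertices of the candidate smaller path that are visited late is precisely what the paper's two-case analysis is for, and it cannot be waved away. Second, and more seriously, your case $w \lexdft u$ misapplies the restated Lemma \ref{lem:FO characterization of DFT}: that statement yields $\mathit{dfp}(v) \le \mathit{dfp}(w)$ for $w \lexdft v$ only under the hypothesis $\mathit{dfp}(v) \lexdft w$, which is the exact opposite of your case assumption $w \lexdft u = \mathit{dfp}(v)$. Indeed, no contradiction can follow from $w \to v$, $w \lexdft v$, and $w \lexdft \mathit{dfp}(v)$ alone, since such configurations are ubiquitous (any $v$ with two in-neighbours preceding it, e.g.\ $v_0 \to a$, $v_0 \to b$, $a \to b$ with $v_0 \to a \edgel v_0 \to b$, where $w = v_0 \lexdft a = \mathit{dfp}(b)$). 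A correct treatment of this case must genuinely use the assumption $\pi \prec \mathfrak{P}v$, i.e.\ the edge-ordering at the divergence point of the two paths, together with the stack behaviour of lexicographic depth-first search; this is the substantive content of the paper's proof and is absent from yours.

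A further structural concern: your induction hypothesis holds only along the $\mathit{dfp}$-orbit of $v$ (its ancestors in the search tree), whereas the paper's proof, which inducts on $\lexdft$, repeatedly invokes $\mathfrak{P}z = \min(v_0 \tp z)$ and the resulting monotonicity for \emph{arbitrary} vertices $z \lexdft v$ appearing on the two paths, not just ancestors of $v$. Even after repairing the case $w \lexdft u$, you should expect to need this stronger induction hypothesis.
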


\begin{theorem}\label{thm:alternate-formulation}
  The order $\cdot \prec_D \cdot$ is exactly the lexicographic depth-first traversal $\cdot \lexdft \cdot$ of $G$.
\end{theorem}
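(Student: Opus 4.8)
The plan is to reduce the statement, via Lemma~\ref{lem:canonical paths are minimal}, to a monotonicity property of the canonical depth-first path operator $\mathfrak{P}$, and then prove that property by induction along the traversal $\lexdft$, letting the first-order characterization of Lemma~\ref{lem:FO characterization of DFT} carry most of the combinatorics.

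\emph{Reduction.} By Definition~\ref{def:prec D} and Lemma~\ref{lem:canonical paths are minimal}, $v \prec_D w$ holds exactly when $\mathfrak{P}v \prec \mathfrak{P}w$. Since $\prec$ is a strict linear order on the proper paths out of $v_0$ and $v \mapsto \mathfrak{P}v$ is injective (a path determines its target), $\prec_D$ is a strict linear order on the vertex set of $G$; $\lexdft$ is one too, and two comparable strict linear orders on the same finite set coincide. Hence it suffices to prove the single implication $u \lexdft v \implies \mathfrak{P}u \prec \mathfrak{P}v$. I will also use two elementary facts. First, $(\ast)$: for $v \neq v_0$, $\mathfrak{P}v = \mathfrak{P}(\dfp(v)) \cdot (v)$, which is immediate from Definition~\ref{def:df and bf canonical paths}; consequently $\{\mathfrak{P}v : v \in V\}$ is prefix-closed and a prefix of $\mathfrak{P}v$ ending at $v'$ is exactly $\mathfrak{P}v'$. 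Second, $(\ast\ast)$: if $\mathfrak{P}a$ is a proper prefix of $\mathfrak{P}b$ then $a \lexdft b$, since the vertices of $\mathfrak{P}b$ read from $b$ upwards are $b, \dfp(b), \dfp^2(b), \dots$ and $\dfp$ strictly decreases the traversal order.

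\emph{The induction.} List the vertices as $v_0 \lexdft v_1 \lexdft \cdots \lexdft v_{n-1}$; by transitivity of $\prec$ it is enough to show $\mathfrak{P}v_i \prec \mathfrak{P}v_{i+1}$ for each $i$. Write $u = v_i$, $v = v_{i+1}$. If $\dfp(v) = u$, then $\mathfrak{P}v = \mathfrak{P}u \cdot (v)$ by $(\ast)$, so $\mathfrak{P}u$ is a proper prefix of $\mathfrak{P}v$ and $\mathfrak{P}u \prec \mathfrak{P}v$ by Definition~\ref{def:path_order}(i). Otherwise $x := \dfp(v) \neq u$, and since $x \lexdft v$ with nothing strictly between $u$ and $v$, we have $x \lexdft u$. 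Apply the reformulation of Lemma~\ref{lem:FO characterization of DFT} (``for $a \lexdft b$, if $\dfp(b) \lexdft a$ then $\dfp(b) \le \dfp(a)$'') repeatedly: from $x = \dfp(v) \lexdft u$ we get $x \le \dfp(u)$; if $x \neq \dfp(u)$ then $x \lexdft \dfp(u)$ and, using $\dfp(u) \lexdft v$, we get $x \le \dfp^2(u)$; and so on. Since the $\dfp$-chain of $u$ descends to $v_0$ while nothing is $\lexdft v_0$, this terminates with $x = \dfp^k(u)$ for some $k \ge 1$, so $x$ is a proper ancestor of $u$ and $\mathfrak{P}x$ is a proper prefix of $\mathfrak{P}u$; write $p = \dfp^{k-1}(u)$, the vertex following $x$ on $\mathfrak{P}u$. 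Also $\mathfrak{P}v = \mathfrak{P}x \cdot (v)$ by $(\ast)$. If $p = v$ then $v$ lies on $\mathfrak{P}u$, whence $v \lexdft u$ by $(\ast\ast)$, contradicting $u \lexdft v$; so $p \neq v$, the longest common prefix of $\mathfrak{P}u$ and $\mathfrak{P}v$ is $\mathfrak{P}x$, and Definition~\ref{def:path_order}(ii) reduces the goal to the edge comparison $x \to p \edgel x \to v$.

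\emph{The remaining edge comparison, and the main obstacle.} What is now needed is the claim $(\dagger)$: if $\dfp(p) = \dfp(q) = x$ (i.e.\ $p$ and $q$ are both children of $x$ in the depth-first tree) and $p \lexdft q$, then $x \to p \edgel x \to q$. This applies above with $q = v$, since $\dfp(p) = \dfp^k(u) = x = \dfp(v)$ and $p \lexdft v$. I expect $(\dagger)$ to be the crux, because—unlike everything above—it genuinely uses the lexicographic search \emph{algorithm}: the reformulation of Lemma~\ref{lem:FO characterization of DFT} is blind to the edge order. The plan for $(\dagger)$ is a direct analysis of the stack in lexicographic depth-first search. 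When $x$ is popped and added to $L$, every child of $x$ is still unvisited (a child is visited after $x$), hence lies in $\partial x$ and is pushed, with the out-neighbours of $x$ in $\partial x$ stacked in reverse $\edgel$-order, least on top. One then shows, by tracking which pushed copy of a vertex is the one actually popped while still unvisited, that the $x$-pushed copy of the $j$-th such neighbour is reached only after the explorations triggered by the first $j-1$ of them are exhausted, and—crucially—that at that point this vertex is still unvisited: an earlier visit would have been caused by a copy pushed by some $z$ with $z \to q$ that was processed strictly after $x$, forcing $\dfp(q) \neq x$. Therefore the children of $x$ enter $L$ in $\edgel$-order, which is $(\dagger)$. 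Granting $(\dagger)$, the induction closes and $\prec_D = \lexdft$.
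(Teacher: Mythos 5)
Your proposal is correct, but it is organized differently from the paper's proof, so a comparison is worth recording. The paper argues by contradiction at the least index where the $\prec_D$-ordering and the traversal disagree: it first proves an auxiliary lemma (Lemma~\ref{lem:prec_D has the DFT property}) showing that $\prec_D$ itself satisfies the first-order DFT property, plays that off against Lemma~\ref{lem:FO characterization of DFT} to force the two disagreeing vertices to share the same depth-first predecessor $v_{\hat\jmath}$, and then runs a stack analysis at stage $\hat\jmath$ to extract an edge-order fact that contradicts Lemma~\ref{lem:canonical paths are minimal}. You instead prove the single inclusion $u \lexdft v \implies \mathfrak{P}u \prec \mathfrak{P}v$ by induction over consecutive traversal pairs, locate the branch vertex $x=\dfp(v)=\dfp^k(u)$ by iterating the monotone reformulation of Lemma~\ref{lem:FO characterization of DFT} (so you never need the analogue of Lemma~\ref{lem:prec_D has the DFT property} for $\prec_D$), and isolate all use of the algorithm in the sibling-order claim $(\dagger)$. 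Your stack argument for $(\dagger)$ is sound, and its crucial sub-claim --- that the copy of a vertex whose pop effects its visit is exactly the copy pushed at the stage of its depth-first predecessor, since an earlier visit would have to come from a copy pushed strictly after $x$ and hence from a later preceding neighbour --- is precisely the content of the paper's Lemma~\ref{lem:characterization of preds}, which you could cite instead of re-deriving; the LIFO ordering of the stage-$x$ copies then gives $(\dagger)$ exactly as you say. What the two routes buy: yours is more economical (one implication plus linearity of both orders, no auxiliary order-theoretic lemma, and the algorithmic content confined to one clean statement about siblings of a common $\dfp$-parent), whereas the paper's symmetric least-counterexample argument keeps the path-order reasoning purely combinatorial via Lemma~\ref{lem:prec_D has the DFT property} and concentrates the stack bookkeeping at a single disagreement point. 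Both rest on the same three pillars: Lemma~\ref{lem:canonical paths are minimal}, the DFT characterization, and the LIFO analysis of lexicographic depth-first search.
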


\section{Categories of graphs} 
\label{sec:graphs}

We now work towards a categorical formulation of the lexicographic depth-first traversal based on Theorem \ref{thm:alternate-formulation}. This means we need to categorify, i.e., equip with morphisms, all the objects we defined in Section \ref{sec:bg}, as well as introduce some new objects.

Continuing the convention established above, we use they symbol $\prec$ and variations thereof to refer to orderings of co-initial paths, and the symbol $\edgel$ to refer to neighborhood orders, i.e., orderings of co-initial edges. We reserve the symbol $<$ for vertex orders, but these will not reappear until Section \ref{sec:putting}.

\begin{definition}
  A \emph{homomorphism of graphs} $G \tot{h} H$ is a function from 
  $G$-vertices to $H$-vertices which preserves edge connectivity: For all $G$-vertices $u,v$, $u \to v$ in $G$ implies $h(u) \to h(v)$ in $H$.
  A \emph{homomorphism of pointed graphs} $G \tot{h} H$ must additionally map the distinguished vertex of $G$, and \emph{only} that vertex, to the distinguished vertex of $H$.
\end{definition}

\begin{definition}
  If $h : G \to H$ is a graph homomorphism and $\pi$ is the path $v_1 \to v_2 \to \dots v_n$ in $G$, then $h(\pi)$ is defined to be the path $h(v_1) \to h(v_2) \to \dots h(v_n)$ in $H$.
\end{definition}

\begin{remark}\label{rem:basic path properties}
The following hold of any homomorphism $h : G \to H$;
\begin{enumerate}
    \item $h(\varepsilon) = \varepsilon$, $h(\pi \sigma) = h(\pi) h(\sigma)$, and if $\pi \sqsubset \sigma$, then $h(\pi) \sqsubset h(\sigma)$.
    
    \item If $h$ is injective on vertices and $\pi$ is a proper path, then $h(\pi)$ is a proper path.
    
    \item If $h$ is injective on vertices, then $h$ preserves longest common prefixes.
\end{enumerate}
\end{remark}
We now define homomorphisms of edge-ordered graphs (cf. Definition \ref{def:EO graphs}). In addition to the ``straightforward'' property of being monotone on neighborhoods, we also want to consider homomorphisms that preserve lexicographically minimal paths.
This gives us two refinements of the notion of homomorphism, depending on whether we want to preserve lexicographically ($\prec$) least paths or short-lex ($\prec^s$) least paths.

\begin{definition}\label{def:EO_hom}
 A \emph{homomorphism} of finite, pointed, edge-ordered graphs $G \tot{h} H$ is a homomorphism of pointed graphs that is monotone on neighborhoods; explicitly,
   \begin{enumerate}[(i)]
    \item\label{def:EO_hom:edge} $u \to v$ in $G$ implies $h(u) \to h(v)$ in 
    $H$,
    \item\label{def:EO_hom:point} $v_G$ is the \emph{unique} vertex such that $h(v_G) = 
    v_H$, where $v_G$ and $v_H$ are the distinguished points of $G$ and $H$ respectively, and
    \item\label{def:EO_hom:monotone} $u \to v_1 \edgel u \to v_2$ implies $h(u) \to
    h(v_1) \edgel h(u) \to h(v_2)$.
  \end{enumerate}
  In addition, a \emph{lex-homomorphism} must preserve minimal paths:
  \begin{enumerate}[(i)]
    \setcounter{enumi}{3}
    \item\label{def:EO_hom:min} $h(\min(u \tp v)) = \min(h(u) \tp h(v))$,
  \end{enumerate}
  and a \emph{short-lex homomorphism} must preserve minimal shortest paths:
  \begin{enumerate}[(i)]
      \setcounter{enumi}{4}
      \item\label{def:EO_hom:shortmin} $h(\min^s(u \tp v)) = \min^s(h(u) \tp h(v))$.
  \end{enumerate}
\end{definition}

\begin{lemma}\label{lem:EO homs preserve lex order}
If $h : G \to H$ is a homomorphism of edge-ordered graphs then $h$ preserves $\cdot \prec \cdot$ as well as $\cdot \prec^s \cdot$.
\end{lemma}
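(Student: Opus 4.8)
The plan is to unpack what it means for a homomorphism $h$ of edge-ordered graphs to preserve $\prec$: given two proper co-initial paths $\pi, \sigma$ in $G$ with $\pi \prec \sigma$, I must show $h(\pi) \prec h(\sigma)$ as co-initial paths in $H$. Note that $h(\pi)$ and $h(\sigma)$ are automatically co-initial since $\pi$ and $\sigma$ share a source $u$ and hence $h(\pi), h(\sigma)$ share source $h(u)$; but they need \emph{not} be proper in general, which is the first subtlety to confront. I would first check that $\prec$ makes sense (or that we reduce to the proper case) on the images.

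I would then split into the two cases of Definition \ref{def:path_order}. In case (ii), where $\pi$ and $\sigma$ genuinely diverge: let $\zeta$ be their longest common prefix, $u$ its target, and $v_1, v_2$ the vertices immediately following $\zeta$ in $\pi, \sigma$, so that $\pi \prec \sigma$ is witnessed by $u \to v_1 \edgel u \to v_2$. Applying condition (iii) of Definition \ref{def:EO_hom} (monotonicity on neighborhoods), $h(u) \to h(v_1) \edgel h(u) \to h(v_2)$. By Remark \ref{rem:basic path properties}(1), $h(\zeta)$ is a common prefix of $h(\pi)$ and $h(\sigma)$ — the issue is whether it is the \emph{longest} common prefix, and whether $h(v_1) \ne h(v_2)$; without injectivity one cannot conclude this directly. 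This is where I expect the main difficulty to lie, and I would look for the ``right'' argument: either invoke an injectivity hypothesis implicitly carried by edge-ordered-graph homomorphisms (via the pointedness/uniqueness condition (ii) — though that only pins down the basepoint), or, more likely, argue purely from the definition of $\prec$ by induction on the length of the common prefix using the left-cancellation and left-multiplication properties of Remark \ref{rem:path_order props}(3,4): write $\pi = \zeta \pi'$, $\sigma = \zeta \sigma'$ with $\pi' \prec \sigma'$ diverging at their first edge, so $h(\pi) = h(\zeta) h(\pi')$, $h(\sigma) = h(\zeta) h(\sigma')$, reduce to showing $h(\pi') \prec h(\sigma')$ where now $\pi', \sigma'$ diverge immediately, handle that base case by monotonicity on the single neighborhood $N(u)$, then re-multiply on the left by $h(\zeta)$ using Remark \ref{rem:path_order props}(3).

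In case (i), where $\pi \sqsubset \sigma$ (or vice versa): by Remark \ref{rem:basic path properties}(1), $\pi \sqsubset \sigma$ implies $h(\pi) \sqsubset h(\sigma)$, and then clause (i) of Definition \ref{def:path_order} gives $h(\pi) \prec h(\sigma)$ directly — provided we are comparing them as (possibly improper) paths under the same clause, so I would make sure the relation $\prec$ as used in the conclusion is the one extended appropriately, or restrict attention to the proper case if that is what the paper intends. For the $\prec^s$ statement, I would note $\prec^s$ refines $\prec$ by length: $\pi \prec^s \sigma$ means either $|\pi| < |\sigma|$ or ($|\pi| = |\sigma|$ and $\pi \prec \sigma$). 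Since $|h(\pi)| = |\pi|$ and $|h(\sigma)| = |\sigma|$ (a homomorphism preserves path length, being vertex-for-vertex), the length comparison is preserved verbatim, and the tie-breaking case follows from the $\prec$-part just proved. Hence $h$ preserves $\prec^s$ as well, completing the proof. Throughout, the only nontrivial ingredient beyond bookkeeping is reconciling the longest-common-prefix behavior of $h$ with the divergence witness, so I would state explicitly whichever lemma (injectivity, or the left-cancellation reduction) does that work.
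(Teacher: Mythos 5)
Your proposal follows essentially the same route as the paper: case (i) via preservation of prefixes, case (ii) via monotonicity of $h$ on the first edges after the longest common prefix $\zeta$, and the $\prec^s$ statement via $|h(\pi)|=|\pi|$. The one difficulty you flag --- whether $h(\zeta)$ is still the \emph{longest} common prefix, i.e.\ whether $h(v_1)\neq h(v_2)$ --- dissolves in a single line, and this is exactly how the paper handles it: monotonicity gives $h(u)\to h(v_1) \edgel h(u)\to h(v_2)$, and since $\edgel$ is a \emph{strict} linear order on $N(h(u))$, the two image edges, and hence $h(v_1)$ and $h(v_2)$, must be distinct; no injectivity hypothesis is needed, and your proposed induction via left-multiplication and left-cancellation (Remark \ref{rem:path_order props}) is sound but superfluous, since its base case rests on the very same strictness observation. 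Your concern about properness of the image paths is reasonable --- the paper passes over it silently --- but the clauses of Definition \ref{def:path_order} apply verbatim to the (possibly improper) paths $h(\pi)$ and $h(\sigma)$, so nothing breaks; with the strictness remark made explicit, your argument is complete and matches the paper's.
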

An important special case of edge-ordered graphs are those where the edge order agrees with the path order in each neighborhood.

\begin{definition}\label{def:lex_graph}
  A \emph{lex-graph} is a pointed, connected, edge-ordered finite graph such that the edge order is compatible with the lexicographic path order;
  explicitly, it is a finite directed graph equipped with
  \begin{enumerate}[(i)]
    \item\label{def:lex_graph:point} a distinguished vertex $v_0$, and
    \item\label{def:lex_graph:order} a linear order $\edgel$ on each neighbourhood $N(u)$ such that for every $v_1, v_2 \in N(u)$, $\min(u \tp v_1) \prec \min(u \tp v_2) \iff u \to v_1 \edgel u \to v_2$.
  \end{enumerate}
\end{definition}

\begin{remark}
We might be tempted to define, analogously, a \emph{short-lex graph} by demanding that  $v_1, v_2 \in N(u)$, $\min^s(u \tp v_1) \prec^s \min^s(u \tp v_2) \iff u \to v_1 \edgel u \to v_2$. However notice that this condition simply holds automatically for any edge-ordered graph: if there is an edge $u \to v$, that edge is the lexicographically least shortest path.
\end{remark}
Finally, we isolate two extremal special cases of lex-graphs, one with very few edges, and one with very many.

\begin{definition}
 An \emph{arborescence} is a pointed directed graph $G = (V,\to,v_0)$ such that for every vertex $u$, there is a unique path $v_0 \tp u$ in $G$.
\end{definition}

\begin{remark}
If $(V,\to,v_0)$ is an arborescence, its underlying undirected graph $(V,E)$ is connected and acyclic (where $E(u,v)$ iff $(u \to v)$ or $(v \to u)$), and every edge $u \to v$ is oriented away from $v_0$, meaning that the distance from $v_0$ to $u$ is less than the distance from $v_0$ to $v$. 
If $G$ is a finite edge-ordered graph that is also an arborescence, then it is already lex-graph, since the only path between $u$ and $v \in N(u)$ \emph{is} the edge $u \to v$.
Moreover, if $S$ and $T$ are arborescences and if $h : S \to T$ is a pointed, edge-ordered graph homomorphism, then it is already both a lex-homomorphism and a short-lex homomorphism.
\end{remark}

\begin{definition}
 A graph is \emph{transitive} if its edge relation is; i.e., if $u \to v$ and $v \to w$ implies $u \to w$. A \emph{transitive lex-graph} is simply a lex-graph with a transitive edge relation.
\end{definition}

It follows readily that this zoo of graph variations each form a category,
\begin{itemize}
    \item $\mathbf{FinGraph}_<^l$ of connected, finite pointed edge ordered graphs with homomorphisms,
    \item $\mathbf{FinGraph}_\star^l$ of connected, finite pointed edge ordered graphs with lex-homomorphisms,
    \item $\mathbf{FinGraph}_\star^s$ of connected, finite pointed edge ordered graphs with short-lex homomorphisms,
    \item $\mathbf{LexGraph}$ of lex-graphs with lex-homomorphisms, 
    \item $\mathbf{FinArb}_\star^<$ of finite, edge-ordered arborescences with edge-ordered graph homomorphisms, 
    \item $\mathbf{TLexGraph}$ of transitive lex-graphs with lex-homomorphisms,
\end{itemize}
Diagrammatically, we summarize the relationships between these categories in Figure~\ref{fig:category zoo}, where each arrow indicates inclusion as a full subcategory.

\begin{figure}
    \centering
\[\begin{tikzcd}
	{\mathbf{TLexGraph}} & {\mathbf{LexGraph}} & {\mathbf{FinGraph}^l_\star} & {\mathbf{FinGraph}^<_\star} \\
	& {\mathbf{FinArb}^<_\star} & {\mathbf{FinGraph}^s_\star}
	\arrow[from=1-2, to=1-3]
	\arrow[from=2-2, to=1-2]
	\arrow[from=1-1, to=1-2]
	\arrow[from=2-2, to=2-3]
	\arrow[from=1-3, to=1-4]
	\arrow[from=2-3, to=1-4]
\end{tikzcd}\]
    \caption{Categories of edge-ordered graphs and their relationships.}
    \label{fig:category zoo}
\end{figure}
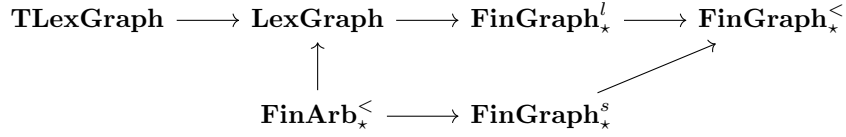

There is one additional category that will be introduced in Section \ref{sec:short-lex transitive closure}, and that is $\mathbf{TArb}$ (Definition \ref{def:cat TArb}) whose objects are transitive closures of arborescences with a particular edge order depending on the underlying arborescence. Curiously, their morphisms preserve \emph{longest} paths instead of shortest paths.


\newcommand{\T}{\Theta}
\newcommand{\I}{I}

\section{Least path trees}\label{sec:least dfs trees}
Given a finite, connected, pointed, edge-ordered graph, we can delete all edges which are not on some lexicographically least path starting from the distinguished vertex $v_0$. We get a finite, edge-ordered arborescence. In this section, we show that this construction (denoted $\T$) is cofree, indeed coreflective: it is right adjoint to the inclusion functor $\I$.

\begin{definition}[The functor $\T$]\label{def:functor T}
 Given a finite, connected, pointed, edge-ordered graph $G$, with distinguished vertex $v_0$, the graph $\T(G)$ is defined as follows:
 \begin{itemize}
     \item the vertices of $\T(G)$ are the vertices of $G$, and
     \item any edge $u \to v$ appears in $\T(G)$ iff $u \to v$ is contained in $\min(v_0 \tp v)$.
     \item Order co-initial edges $u \to v_1 \edgel u \to v_2$ in $\T(G)$ iff the same relation holds in $G$.
 \end{itemize}
 For a lex-homomorphism $h : G \to H$ define $\T(h):\T(G) \to \T(H)$ by $\T(h)(v) = h(v)$ for any vertex $v \in \T(G)$.
\end{definition}
The proof that $\T$ is a well-defined functor into the indicated category is in the appendix (Lemma \ref{lem:theta well defined}). Next we define $\I$, which is simply inclusion of categories.

\begin{definition}[The functor $\I$]\label{def:functor I}
 Given a finite, edge-ordered arborescence $T$, the object $\I(T)$ is simply identified with $T$. Given a pointed, edge-ordered homomorphism $h : S \to T$, the homomorphism $\I(h):\I(S) \to \I(T)$ is simply identified with $h$.
\end{definition}
Since every finite, edge-ordered arborescence is a finite, connected, pointed edge-ordered graph, and since morphisms in $\mathbf{FinArb}^<_\star$ are defined to be morphisms of pointed, edge-ordered graphs, $\I$ is well-defined.

\begin{theorem}\label{thm:adj graphs with arbs (lex)}
  There is an adjunction $I \dashv \Theta$.
\end{theorem}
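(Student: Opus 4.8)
The plan is to exhibit the adjunction $\I \dashv \T$ by verifying the universal property of the counit directly: for each finite, connected, pointed, edge-ordered graph $G$, the identity-on-vertices inclusion $\varepsilon_G : \I\T(G) \to G$ (which is a well-defined morphism in $\mathbf{FinGraph}_\star^l$, since every edge of $\T(G)$ is an edge of $G$ and the neighborhood orders are inherited) is terminal among morphisms from arborescences into $G$. Concretely, given any finite, edge-ordered arborescence $S$ and any lex-homomorphism $f : \I(S) \to G$, I must produce a unique arborescence homomorphism $\bar f : S \to \T(G)$ with $\varepsilon_G \circ \I(\bar f) = f$. Since $\varepsilon_G$ is the identity on vertices, uniqueness is immediate: $\bar f$ must agree with $f$ on vertices, so the only content is (a) that $f$, viewed as a vertex map into $\T(G)$, actually lands in $\T(G)$ as a graph/edge-ordered-graph homomorphism, and (b) that it is then automatically an arborescence homomorphism (which is free, as the remark after the definition of arborescence notes).

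The heart of the argument is step (a): if $u \to v$ is an edge of $S$, I must show $f(u) \to f(v)$ is an edge of $\T(G)$, i.e., that $f(u) \to f(v)$ lies on $\min(v_0^G \tp f(v))$. Here I would use that $S$ is an arborescence with root $v_0^S$, so there is a unique path $\pi : v_0^S \tp v$ in $S$, and it must pass through $u$ (since $u \to v$ and the path to $v$ is unique, its penultimate vertex is $u$); write $\pi = \pi' \cdot (u \to v)$ where $\pi' : v_0^S \tp u$ is the unique such path. Since $S$ is itself a lex-graph (being an edge-ordered arborescence) and the path from $v_0^S$ to any vertex is unique, $\min(v_0^S \tp v) = \pi$. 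Now $f$ is a lex-homomorphism, so by condition (iv) of Definition~\ref{def:EO_hom}, $f(\min(v_0^S \tp v)) = \min(f(v_0^S) \tp f(v)) = \min(v_0^G \tp f(v))$ — using that $f$ is pointed, so $f(v_0^S) = v_0^G$. Hence $f(\pi) = \min(v_0^G \tp f(v))$, and its last edge is $f(u) \to f(v)$, which therefore lies on $\min(v_0^G \tp f(v))$; so $f(u) \to f(v)$ is an edge of $\T(G)$ as required. The same computation gives that $\T$-edges arise exactly from the final edge of canonical paths, and monotonicity on neighborhoods of $\bar f$ is inherited directly from that of $f$ (since $\T(G)$'s neighborhood orders are restrictions of $G$'s, and likewise for $S$). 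That $\bar f$ is an arborescence homomorphism, hence a morphism of $\mathbf{FinArb}_\star^<$, is then automatic.

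To finish, I would assemble this into a genuine adjunction: set the right adjoint to be $\T$ on objects and morphisms (well-definedness is Lemma~\ref{lem:theta well defined}), take $\varepsilon_G$ as above for the counit, and check naturality of $\varepsilon$ — for a lex-homomorphism $h : G \to H$, the square relating $\varepsilon_G, \varepsilon_H, \I\T(h), h$ commutes because every map in sight is the identity on the underlying vertex sets. The universal property just established then yields, by the standard characterization of adjunctions via a terminal counit, the adjunction $\I \dashv \T$. Alternatively, and perhaps more cleanly, I could directly establish the natural bijection $\mathbf{FinArb}_\star^<(S, \T(G)) \cong \mathbf{FinGraph}_\star^l(\I(S), G)$ given in both directions by "do nothing to the vertex map," checking well-definedness left-to-right (trivial: a $\T(G)$-edge is a $G$-edge, orders restrict) and right-to-left (the step (a) argument above), and naturality in both variables (again automatic, as all composites act as the underlying vertex functions).

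\textbf{Expected main obstacle.} The only genuinely non-formal point is step (a): confirming that a lex-homomorphism out of an arborescence sends tree-edges to edges of the least-path tree $\T(G)$. Everything hinges on using condition (iv) of the lex-homomorphism definition together with the fact that in an arborescence the unique path to a vertex \emph{is} its lexicographically least path; I expect the subtlety to be bookkeeping about penultimate edges of canonical paths and making sure the pointedness clause (ii) is invoked so that $f(v_0^S) = v_0^G$. All other verifications (uniqueness, naturality, the arborescence-morphism condition) are forced by the fact that every functor and morphism here is the identity on underlying vertices.
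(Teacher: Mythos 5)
Your proposal is correct and is essentially the paper's own argument: the paper establishes the adjunction via the identity-on-vertices bijection $\mathbf{FinArb}^<_\star(T,\Theta(G)) \cong \mathbf{FinGraph}^l_\star(I(T),G)$, whose only substantive check is exactly your step (a) --- an edge $u \to v$ of an arborescence lies on the unique, hence lexicographically minimal, path $v_0 \rightsquigarrow v$, so a lex-homomorphism carries it into $\Theta(G)$ --- and your counit formulation (together with the ``cleaner alternative'' you sketch) is just the standard repackaging of that bijection. The one point you leave implicit, namely that the identity-on-vertices maps into $G$ also satisfy the minimal-path condition (iv) of Definition~\ref{def:EO_hom}, is the same point the paper dispatches in one line by observing that such maps land in $\Theta(G)$, the tree of minimal paths.
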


Since the image of $\I$ is always a lex-graph, and since $\mathbf{LexGraph}$ is a full subcategory of $\mathbf{FinGraph}^l_\star$, we actually get another adjunction for free, namely $I' \dashv \Theta'$
where $\I'$ is the functor $\I$ but with target category \textbf{LexGraph}, and $\T'$ is $\T$ restricted to \textbf{LexGraph}. The proof is in the appendix (Lemma \ref{lem:adjunction for free}).

\section{Least shortest-path trees}\label{sec:least bfs trees}
Our objective in this section is to establish a shortest-paths analogue of the previous section, viz., $I \dashv S$
where $\mathbf{FinGraph}_\star^s \tot{S} \mathbf{FinArb}^<_\star$ remembers only the edges on lexicographically least \emph{shortest} paths and $\mathbf{FinArb}^<_\star \tot{\I} \mathbf{FinGraph}_\star^s$ is the inclusion functor. (It is not, of course, the same $\I$ as in the previous section, the source category being different.)

\begin{definition}[The functor $S$]\label{def:functor S}
 Given a finite, connected, pointed, edge-ordered graph $G$, with distinguished vertex $v_0$, the graph $S(G)$ is defined as follows:
 \begin{itemize}
     \item the vertices of $S(G)$ are identified with the vertices of $G$, and
     \item any edge $u \to v$ appears in $S(G)$ iff $u \to v$ is contained in $\min^s(v_0 \tp v)$.
     \item Order co-initial edges $u \to v_1 \edgel u \to v_2$ in $S(G)$ iff the same relation holds in $G$.
 \end{itemize}
 If $h : G \to H$ is a homomorphism, define $S(h):S(G) \to S(H)$ by $S(h)(v) = h(v)$, for any vertex $v \in S(G)$.
\end{definition}
To show that $S$ is functorial, it suffices to check that $S(G)$ is always an arborescence. We prove this in the appendix (Lemma \ref{lem:S well defined}).

\begin{theorem}\label{thm:adj graphs with arbs (short-lex)}
  There is an adjunction $\I \dashv S$.
\end{theorem}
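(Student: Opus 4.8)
The plan is to mirror the structure of the depth-first case (Theorem~\ref{thm:adj graphs with arbs (lex)}) almost verbatim, replacing $\min$ by $\min^s$ and lex-homomorphisms by short-lex homomorphisms throughout. First I would establish the analogue of the ``cofree bijection'' lemma used implicitly there: for a finite, edge-ordered arborescence $T$ and a finite, connected, pointed, edge-ordered graph $G$, there is a natural bijection
\[
 \mathbf{FinArb}^<_\star\bigl(T,\, S(G)\bigr) \;\cong\; \mathbf{FinGraph}^s_\star\bigl(\I(T),\, G\bigr).
\]
Since $\I$ does not change the underlying function and $S(G)$ has the same vertex set as $G$, a candidate map in either direction is just ``the same function on vertices.'' So the real content is: (a) if $h_\blw : T \to S(G)$ is an edge-ordered arborescence homomorphism, then the same function $\I(T) \to G$ is a short-lex homomorphism, and (b) conversely, if $h^\abv : \I(T) \to G$ is a short-lex homomorphism, then it corestricts to a morphism $T \to S(G)$ in $\mathbf{FinArb}^<_\star$, i.e.\ it lands in $S(G)$ and is monotone on neighborhoods there. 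Parts (a) and (b) being mutually inverse is then immediate, as is naturality in $T$ and $G$ — exactly as in the (excised) depth-first proof, nothing changes a morphism, so after unpacking definitions the naturality squares commute on the nose.

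For direction (a): given $h_\blw : T \to S(G)$, it preserves edges, the point, and neighborhood order as a map into $G$ (composing with the inclusion $S(G) \hookrightarrow G$, which is not quite literal since $S(G)$ has fewer edges, but every edge of $S(G)$ is an edge of $G$ with the same order, so this is fine). To check condition~(v) of Definition~\ref{def:EO_hom}, i.e.\ $h(\min^s(u \tp v)) = \min^s(h(u) \tp h(v))$: since $T$ is an arborescence, $\min^s(u \tp v)$ is the unique path $u \tp v$ in $T$ (it exists only when $v$ is reachable from $u$ in $T$; when there is no such path there is nothing to preserve). Its image under $h_\blw$ is a path $h(u) \tp h(v)$ lying entirely inside $S(G)$. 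Here I would invoke the key structural fact about $S(G)$ — the analogue of what makes $\T$ work — namely that $S(G)$ is precisely the ``least-shortest-path tree,'' so a path inside $S(G)$ from $v_0$ is \emph{the} $\min^s$ path to its target; more generally I expect one needs: any path in $S(G)$ from $u$ to $v$ equals $\min^s(u \tp v)$ \emph{in $G$} when $u$ lies on the $v_0$-to-$v$ branch, and otherwise a short argument using that $S$-paths concatenated with the canonical $v_0$-path are themselves $\prec^s$-minimal (Lemma~\ref{lem:bf canonical paths are minimal} and Corollary~\ref{cor:bf canonical paths are minimal}). Combined with Lemma~\ref{lem:EO homs preserve lex order} (homs preserve $\prec^s$), one pins down $h(\min^s(u\tp v))$ as the unique $\prec^s$-least path of its length, hence equal to $\min^s(h(u) \tp h(v))$.

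For direction (b): given a short-lex homomorphism $h^\abv : \I(T) \to G$, I must show every edge $u \to v$ of $T$ maps to an edge of $S(G)$, i.e.\ that $h(u) \to h(v)$ lies on $\min^s(v_0 \tp h(v))$. Since $T$ is an arborescence, $u \to v$ is the final edge of the unique path $v_0 \tp v$ in $T$, which is $\min^s(v_0 \tp v)$ (the only path, hence trivially the least shortest one). By condition~(v), $h$ sends this to $\min^s(v_0 \tp h(v))$, whose final edge is therefore $h(u) \to h(v)$; so that edge is on $\min^s(v_0 \tp h(v))$ and thus belongs to $S(G)$. Monotonicity of $h^\abv$ on neighborhoods in $S(G)$ is inherited from monotonicity in $G$ together with the definition of the edge order on $S(G)$ as the restriction of that on $G$. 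This establishes that $h^\abv$ corestricts to $T \to S(G)$, and a symmetric check (``forget that you landed in $S(G)$'') shows $(a)$ and $(b)$ are inverse.

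The main obstacle I anticipate is direction (a), specifically showing that the $h_\blw$-image of an arbitrary branch of $T$ — not just a branch through $v_0$ — is $\prec^s$-minimal in $G$. For branches through $v_0$ this is Corollary~\ref{cor:bf canonical paths are minimal}(1) applied in $S(G)$ and transported along the inclusion; for a general branch from $u$ to $v$ one needs that prefixing the canonical $v_0 \tp u$ path makes it shortlex-minimal, then left-cancellation (Remark~\ref{rem:path_order props}(\ref{rem:left cancellation})) to descend back to the sub-branch, being careful that shortlex-minimality is preserved under removing a common prefix (length drops equally on both sides, and $\prec$ left-cancels). Everything else — functoriality of $S$ is already granted by Lemma~\ref{lem:S well defined}, $\I$ is trivially functorial, and naturality is formal — is routine bookkeeping identical to the depth-first development.
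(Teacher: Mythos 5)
Your proposal is correct and follows essentially the same route as the paper: the hom-set bijection is the identity on underlying vertex maps, one direction uses that an edge $u \to v$ of the arborescence lies trivially on $\min^s(v_0 \tp v)$ and hence its image lies on $\min^s(h(v_0) \tp h(v))$ and thus in $S(G)$, while the other direction gets condition (v) from the fact that the image lands in $S(G)$, with bijectivity and naturality trivial since nothing changes the morphisms. In fact you are more careful than the paper on the one delicate point (that a path inside $S(G)$ between arbitrary vertices is $\prec^s$-minimal in $G$, via prefixing with the $v_0$-path and left-cancellation), which the paper dismisses as ``easily justified.''
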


\section{Transitive closure of lex-graphs} 
\label{sec:traversals_as_transitive_lex_graphs}
There is a well-known adjunction between the category of graphs and the category
$\mathbf{TGraph}$ of transitive graphs, where the functor in one direction transitively closes the edge relation, and in the other direction is simply inclusion \cite{Mas20}. Here, we refine this to an adjunction between
$\mathbf{LexGraph}$ and $\mathbf{TLexGraph}$. Indeed, the usual transitive closure appears when forgetting the
order

While it is immediately clear that there is a forgetful (inclusion) functor
$\mathbf{TLexGraph} \tot{\U} \mathbf{LexGraph}$, we must construct the free functor in the other direction.

\begin{definition}[The functor $\F$]
 Given a lex-graph $G$, define the transitive lex-graph $\F(G)$ as follows:
 \begin{itemize}
     \item the vertices of $\F(G)$ are the vertices of $G$,
     \item the distinguished point of $\F(G)$ is the distinguished point of $G$,
     \item The edge relation of $\F(G)$ is the transitive closure of the edge relation of $G$, and finally
     \item for any vertices $v_1,v_2$ in a common neighborhood $N(u)$, let $u \to v_1 \edgel u \to v_2$ in $\F(G)$ iff $\min(u \tp v_1) \prec \min(u \tp v_2)$ in $G$.
 \end{itemize}
 On morphisms, given $G \tot{h} H$ we define $\F(G) \tot{\F(h)} \F(H)$ by
  $\F(h)(v) = h(v)$.
\end{definition}
We must show that $F$ is a well-defined functor, but before doing so, we state several important relationships between the lex-graph $G$ and the edge-ordered graph $\F(G)$:

\begin{lemma}\label{lem:min}
For any lex-graph $G$,
\begin{enumerate}[(i)]
    \item The neighborhood order of $\F(G)$ extends that of $G$
    
    \item The lexicographic path order of $\F(G)$ extends that of $G$.
    
    \item Minimal paths in $G$ and $\F(G)$ coincide.
\end{enumerate}

\end{lemma}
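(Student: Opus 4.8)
The plan is to prove the three claims of Lemma~\ref{lem:min} in order, since each builds on the previous one. Throughout, write $\prec$ for the lexicographic path order on $G$ and $\prec_{\F}$ for that on $\F(G)$, and similarly $\edgel$ versus $\edgel_{\F}$ for the neighborhood orders; note the vertex sets coincide.

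For (i): take $v_1, v_2 \in N_G(u)$ with $u \to v_1 \edgel u \to v_2$ in $G$. Since $G$ is a lex-graph, Definition~\ref{def:lex_graph}(\ref{def:lex_graph:order}) gives $\min(u \tp v_1) \prec \min(u \tp v_2)$ in $G$. But the definition of $\F$ says precisely that $u \to v_1 \edgel u \to v_2$ in $\F(G)$ iff $\min(u \tp v_1) \prec \min(u \tp v_2)$ in $G$ — so the $\F$-order on any neighborhood of $G$ restricts to the $G$-order there. (The $\F$-order additionally ranks the \emph{new} edges $u \to w$ created by transitive closure, using the same rule $\min(u \tp w)$, all computed in $G$; so it genuinely extends rather than merely agrees.)

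For (iii), which I would actually prove before (ii): I claim every edge $u \to v$ appearing in $\min(v_0 \tp v)$, or more generally in any $\prec$-minimal path $\min(x \tp y)$ in $G$, is an \emph{original} edge of $G$, not one freshly added by transitive closure. This is essentially by construction of $\F$: the vertices and — crucially — the paths of $G$ sit inside $\F(G)$, and $\min$ in $\F(G)$ ranges over a superset of the $G$-paths. So I would argue: a $G$-path $x \xrightsquigarrow{\pi} y$ is still a path in $\F(G)$; any $\F(G)$-path using a transitive-closure edge $a \to b$ can be ``expanded'' by replacing that edge with a witnessing $G$-path $a \tp b$, yielding a co-final $G$-path; and by Remark~\ref{rem:path_order props}(\ref{rem:empty path least})--(\ref{rem:left cancellation}) together with the fact that $\prec_{\F}$ restricted to $G$-paths equals $\prec$ (which follows from (i), since the path order is determined neighborhood-by-neighborhood by the edge order at the branch point), this expansion can only decrease the path in $\prec_{\F}$-order, or at worst keep it the same when the edge was already a single $G$-edge. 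Hence the $\prec_{\F}$-minimal $x \tp y$ path can be taken among $G$-paths, where it coincides with $\min(x \tp y)$ computed in $G$. I need a small amount of care that ``expansion'' terminates and that a proper $\F(G)$-path expands to a (possibly improper, hence reducible to a) proper $G$-path — this is routine since $G$ is finite and repetitions can be deleted without increasing $\prec$.

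For (ii): given co-initial proper paths $\pi, \sigma$ in $\F(G)$ that happen to both be $G$-paths, I must show $\pi \prec \sigma \iff \pi \prec_{\F} \sigma$. Using Definition~\ref{def:path_order}: the prefix case (\ref{def:extends sqsubet}) is identical in both graphs; in case (\ref{def:compares first different edge}) the longest common prefix $\zeta$ and the branch vertex $u$ are the same in both, and the comparison reduces to comparing $u \to v_1$ versus $u \to v_2$ in the respective neighborhood orders, which agree by part (i). So $\prec_{\F}$ restricted to $G$-paths \emph{is} $\prec$; combined with the observation that $\prec_{\F}$ is defined on strictly more pairs of co-initial paths (those running through new edges), this is exactly the assertion that the path order of $\F(G)$ extends that of $G$.

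I expect the main obstacle to be part (iii), specifically making the ``expansion'' argument airtight: I must be sure that when a transitive edge $a \to b$ (added because $a \tp b$ in $G$) is spliced into a minimal $\F(G)$-path, substituting \emph{the $G$-minimal} path $a \tp b$ does not introduce a shortcut or a detour that would make some \emph{other} $\F(G)$-path beat it, i.e., I need a genuine lexicographic comparison rather than a hand-wave about ``the same or smaller.'' The clean way is probably to prove the sharper statement first: for all $x, y$, $\min^{\F(G)}(x \tp y)$ (computed among all $\F(G)$-paths) equals $\min(x \tp y)$ (computed in $G$), by induction on the length of the $\F(G)$-minimal path, peeling off the first edge and using Remark~\ref{rem:path_order props}(\ref{rem:left mult}). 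With that sharper claim in hand, (iii) is immediate and (ii) also follows cleanly.
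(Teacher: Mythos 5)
Your parts (i) and (ii) are correct and are essentially the paper's own argument: (i) is immediate from Definition~\ref{def:lex_graph} together with the definition of $\F$, and (ii) reduces, via the longest common prefix in Definition~\ref{def:path_order}, to comparing the first differing edges, which agree by (i). The properness bookkeeping you mention (an expansion may repeat vertices, repetitions can be deleted) is treated at the same level of detail as in the paper, so that is not the issue.

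The genuine gap is in (iii), and it is exactly the step you flag but never close: you assert that expanding a transitive-closure edge along ``a witnessing $G$-path'' can only decrease the path in the order of $\F(G)$, justified only by the general properties of $\prec$ and by (ii). That assertion is false for an arbitrary witness: take $G$ with point $a$, edges $a \to c$, $a \to d$, $c \to b$, $d \to b$ and $a \to d \edgel a \to c$; then $\min_G(a \tp b) = a \to d \to b$, so in $\F(G)$ the new edge satisfies $a \to b \edgel a \to c$, and hence expanding the one-edge path $a \to b$ along the witness $a \to c \to b$ strictly \emph{increases} it. What is actually needed---and what the paper proves---is the sharp comparison for the $G$-minimal witness: if $u \to v$ is an edge of $\F(G)$ not in $G$, then $\min_G(u \tp v) \prec (u \to v)$ in $\F(G)$. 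The ingredient missing from your sketch is prefix-closure of minimal paths: the first edge $u \to v_1$ of $\min_G(u \tp v)$ equals $\min_G(u \tp v_1)$, which is a proper prefix of $\min_G(u \tp v)$ and hence $\prec$-below it, so by the very definition of the neighborhood order of $\F(G)$ we get $u \to v_1 \edgel u \to v$ in $\F(G)$, and comparing first edges gives $\min_G(u \tp v) \prec (u \to v)$. With this, every $\F(G)$-path through a non-$G$-edge can be strictly lessened, so $\F(G)$-minimal paths use only $G$-edges and coincide with $G$-minimal paths by (ii). Note that your fallback plan (induction on the length of the $\F(G)$-minimal path, peeling off the first edge) hits the same wall when you must rule out the first edge being a new edge, so it does not avoid proving this comparison explicitly.
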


\begin{lemma}\label{lem:free_functor}
  The map $\mathbf{LexGraph} \tot{\F} \mathbf{TLexGraph}$ is well-defined and functorial.
\end{lemma}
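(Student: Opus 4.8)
The plan is to verify the two halves of ``well-defined and functorial'' separately: first that $\F$ sends a lex-graph to an honest object of $\mathbf{TLexGraph}$, and then that $\F$ sends a lex-homomorphism to a lex-homomorphism, compatibly with identities and composition.

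For the object part, let $G$ be a lex-graph. The underlying graph of $\F(G)$ is the transitive closure of the underlying graph of $G$, so it is finite, transitive, and (since $G$ is connected and we have only added edges) still connected and pointed with the same distinguished vertex. The only real content is that the prescribed neighborhood order $\edgel$ on $\F(G)$ --- defined by $u \to v_1 \edgel u \to v_2 \iff \min(u \tp v_1) \prec \min(u \tp v_2)$ in $G$ --- is (i) a strict linear order on each neighborhood and (ii) compatible with the lexicographic path order computed \emph{in $\F(G)$}, which is what Definition \ref{def:lex_graph} demands. Point (i) is immediate: $v_1 \mapsto \min(u \tp v_1)$ is an injection from $N_{\F(G)}(u)$ into the set of proper paths of $G$ (distinct targets give distinct paths), and $\prec$ linearly orders proper co-initial paths, so the pullback order is a strict linear order. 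Point (ii) is exactly Lemma \ref{lem:min}: part (iii) says minimal paths of $G$ and $\F(G)$ coincide, so for $v_1, v_2 \in N_{\F(G)}(u)$ we have $\min_{\F(G)}(u \tp v_1) \prec_{\F(G)} \min_{\F(G)}(u \tp v_2)$ iff $\min_G(u \tp v_1) \prec_G \min_G(u \tp v_2)$ (using part (ii) that the path order of $\F(G)$ extends that of $G$, and part (i) that the edge orders are likewise compatible), which is precisely the defining condition $u \to v_1 \edgel u \to v_2$ in $\F(G)$. Hence $\F(G)$ is a transitive lex-graph.

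For the morphism part, let $h : G \to H$ be a lex-homomorphism and set $\F(h) = h$ on vertices. I must check the four conditions of Definition \ref{def:EO_hom} for $\F(h) : \F(G) \to \F(H)$. Edge preservation: if $u \to v$ in $\F(G)$ then $u \tp v$ in $G$, so $h(u) \tp h(v)$ in $H$ (homomorphisms preserve paths), hence $h(u) \to h(v)$ in the transitive closure $\F(H)$. The pointedness condition transfers verbatim from $h$ since $\F$ changes neither the vertex set nor the distinguished vertex. Monotonicity on neighborhoods: if $u \to v_1 \edgel u \to v_2$ in $\F(G)$, i.e.\ $\min_G(u \tp v_1) \prec \min_G(u \tp v_2)$, then since $h$ is a lex-homomorphism it sends $\min_G(u \tp v_i)$ to $\min_H(h(u) \tp h(v_i))$ (condition (iv)), and $h$ preserves $\prec$ by Lemma \ref{lem:EO homs preserve lex order}; combined with the fact that a lex-homomorphism is injective on vertices away from collapses --- more precisely, that $h(\min_G(u\tp v_1)) \sqsubset h(\min_G(u\tp v_2))$ cannot occur when the originals are incomparable, which follows since minimal paths to distinct targets are $\sqsubset$-incomparable and $h$ of a minimal path is minimal --- we get $\min_H(h(u) \tp h(v_1)) \prec \min_H(h(u) \tp h(v_2))$, i.e.\ $h(u) \to h(v_1) \edgel h(u) \to h(v_2)$ in $\F(H)$. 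The minimal-path condition (iv) for $\F(h)$ then follows from Lemma \ref{lem:min}(iii): minimal paths in $\F(G)$ and $\F(H)$ agree with those in $G$ and $H$, and $h$ already preserves the latter. Finally, functoriality (identities to identities, composites to composites) is immediate because $\F$ acts as the identity on vertex-functions and morphism equality in these categories is equality of vertex-functions.

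The main obstacle is the monotonicity clause for $\F(h)$: one must be careful that preservation of $\prec$ under $h$ (Lemma \ref{lem:EO homs preserve lex order}) gives a \emph{strict} inequality on the images, which requires ruling out the degenerate case where the two minimal paths collapse to $\sqsubset$-comparable (hence $\prec$-comparable but possibly in the wrong strict direction, or equal) images under $h$. This is handled by observing that $\min_G(u \tp v_1)$ and $\min_G(u \tp v_2)$ have distinct targets $v_1 \ne v_2$, so neither is a prefix of the other, and since $h$ maps each to a minimal path in $H$ with targets $h(v_1)$ and $h(v_2)$ --- which are distinct because distinct elements of a single neighborhood $N(u)$ of the lex-graph $G$ stay distinct under a pointed homomorphism only when $h(v_1) \ne h(v_2)$; if instead $h(v_1) = h(v_2)$ then both images equal $\min_H(h(u) \tp h(v_1))$ and the clause is vacuous since $v_1 \edgel v_2$ and $v_2 \edgel v_1$ cannot both hold, so this case does not arise --- the images are again $\sqsubset$-incomparable, and then $\prec$ being preserved forces the strict inequality in the correct direction. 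All remaining verifications are routine bookkeeping with the definitions.
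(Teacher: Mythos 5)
Your proof is correct and follows essentially the same route as the paper's: verify the lex-graph condition for $\F(G)$ via Lemma~\ref{lem:min}, and check conditions (i)--(iv) of Definition~\ref{def:EO_hom} for $\F(h)$ using preservation of minimal paths together with preservation of $\prec$ (Lemma~\ref{lem:EO homs preserve lex order}). One small inaccuracy in your strictness digression: minimal paths to distinct targets \emph{can} be prefix-comparable (the minimal path $u \tp v_2$ may pass through $v_1$, and minimal paths are closed under prefixes), but this is harmless, since in that case $h$ sends a proper prefix to a proper prefix of strictly smaller length, so the strict inequality $\min(h(u) \tp h(v_1)) \prec \min(h(u) \tp h(v_2))$ still follows.
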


\begin{theorem}\label{thm:lex/tlex adjunction}
  There is an adjunction $F \dashv U$.
\end{theorem}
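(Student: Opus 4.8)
<br>

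The plan is to exhibit the adjunction $\F \dashv \U$ by constructing the unit and verifying the universal property directly, rather than chasing a hom-set bijection symbolically. For a lex-graph $G$, the unit component $\eta_G : G \to \U\F(G)$ should be the identity on vertices; by Lemma~\ref{lem:min}(i) it is monotone on neighborhoods (the neighborhood order of $\F(G)$ extends that of $G$), it trivially preserves the edge relation and the distinguished point, and by Lemma~\ref{lem:min}(iii) it preserves minimal paths, hence is a lex-homomorphism. So $\eta_G$ is a well-defined morphism of $\mathbf{LexGraph}$.

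Next I would verify the universal property: given a transitive lex-graph $T$ and a lex-homomorphism $h : G \to \U T$, I claim there is a unique lex-homomorphism $\hat h : \F(G) \to T$ with $\U\hat h \circ \eta_G = h$. Uniqueness is forced because $\eta_G$ is the identity on vertices and morphisms in these categories are determined by their action on vertices, so necessarily $\hat h = h$ as a vertex map. For existence I must check that this vertex map is actually a morphism $\F(G) \to T$ in $\mathbf{TLexGraph}$. Edge preservation: an edge of $\F(G)$ comes from a path $\pi : u \tp v$ in $G$; applying $h$ gives a path $h(u) \tp h(v)$ in $T$, and since $T$ is transitive this yields an edge $h(u) \to h(v)$ in $T$. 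The point condition is inherited from $h$. Monotonicity on neighborhoods of $\F(G)$: given $u \to v_1 \edgel u \to v_2$ in $\F(G)$, by definition $\min(u \tp v_1) \prec \min(u \tp v_2)$ in $G$; by Lemma~\ref{lem:EO homs preserve lex order} $h$ preserves $\prec$, and one must identify $h(\min(u \tp v_i))$ with the relevant minimal path in $\U T$ — here I would use that $h$ is a \emph{lex}-homomorphism (condition \ref{def:EO_hom:min}) so $h(\min(u \tp v_i)) = \min(h(u) \tp h(v_i))$, and then that $\U T$ being a transitive lex-graph means its neighborhood order \emph{is} the minimal-path order, giving $h(u) \to h(v_1) \edgel h(u) \to h(v_2)$ in $T$. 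Finally $\hat h$ preserves minimal paths in $\F(G)$: by Lemma~\ref{lem:min}(iii) minimal paths in $\F(G)$ coincide with those in $G$, and likewise in $T$ (as a transitive lex-graph, minimal paths are single edges, essentially), so this reduces to the fact already established that $h$ preserves $\min$.

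With the universal property of $\eta_G$ in hand, the adjunction $\F \dashv \U$ follows from the standard characterization of adjoints via a universal arrow from each object; naturality of the resulting bijection $\mathbf{TLexGraph}(\F(G), T) \cong \mathbf{LexGraph}(G, \U T)$ is automatic, but if a direct verification is wanted it is again trivial since none of $\F$, $\U$, or the unit alters the underlying vertex map, exactly as in the (elided) proof of Theorem~\ref{thm:adj graphs with arbs (lex)}.

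The main obstacle is the monotonicity-on-neighborhoods step for $\hat h$: it requires carefully matching the \emph{ad hoc} neighborhood order on $\F(G)$ (defined via $\prec$-minimal paths) with the neighborhood order of the transitive target $T$, using both that $h$ is a lex-homomorphism and that $T$'s order is already the lex-path order on a transitive graph. The other verifications (edge preservation via transitivity of $T$, uniqueness via action-on-vertices, preservation of $\min$ via Lemma~\ref{lem:min}) are routine, and the bulk of the conceptual content is packaged in Lemma~\ref{lem:min} and Lemma~\ref{lem:EO homs preserve lex order}, which I am free to invoke.
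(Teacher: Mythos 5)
Your proposal is correct and in substance the same as the paper's proof: the paper establishes the hom-set bijection directly by checking that the identity-on-vertices maps in both directions satisfy conditions (i)--(iv) of Definition \ref{def:EO_hom} (using Lemma \ref{lem:min}, transitivity of the target, and the lex-graph property of the target to convert the minimal-path comparison back into a neighborhood comparison), which is exactly the content of your unit-plus-universal-arrow packaging, with bijectivity/naturality trivial since nothing changes the vertex map. One aside is off --- in a transitive lex-graph minimal paths need not be single edges (by Lemma \ref{lem:min}(iii) they coincide with those of the underlying lex-graph) --- but your actual reduction via $h(\min(u \tp v)) = \min(h(u) \tp h(v))$ does not depend on it.
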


\begin{remark}
In the title of this paper, we promise universal constructions -- and while such do arise from adjunctions (from the unit and counit), it seems fitting that we make this explicit at least once. The reader is invited to extract similar universal mapping properties from other adjunctions in this paper.

As a consequence of Theorem~\ref{thm:lex/tlex adjunction}, for any lex-graph $G$ in with lexicographic-transitive closure $U(F(G))$, given any other transitive lex-graph $U(G')$ and $G \tot{h} U(G')$ there is a \emph{unique} homomorphism of transitive lex-graphs $F(G) \tot{\hat{h}} G'$ such that $h$ factors as $G \hookrightarrow U(F(G)) \tot{U(\hat{h})} U(G')$.
\end{remark}

\section{Transitive closure of least shortest path trees} 
\label{sec:short-lex transitive closure}
We now establish the final adjunction of our paper, and the second adjunction needed to characterize breadth-first traversals. Analogously to the depth-first case, this adjunction relates $\mathbf{FinArb}^<_\star$ and a category of transitively closed graphs:

\begin{definition}[The category $\mathbf{TArb}$]\label{def:cat TArb}
 A finite, pointed, connected, edge-ordered graph $G$ is an object of $\mathbf{TArb}$ iff there exists a finite, connected, pointed, edge-ordered arborescence $T$ and an identification of the vertices of $G$ with the vertices of $T$ such that
 \begin{itemize}
     \item the distinguished points of each are identical,
     \item the edge relation of $G$ is the transitive closure of the edge relation of $T$, and
     \item $u \to v_1 \edgel u \to v_2$ in $G$ iff $v_0 \tp v_1 \prec^s v_0 \tp v_2$ in $T$, where $v_0 \tp v$ is the \emph{unique} path from $v_0$ to $v$ in $T$.
 \end{itemize}
 A map $G_1 \tot{h} G_2$ is a morphism in $\mathbf{TArb}$ if it is a homomorphism of edge-ordered graphs ((\ref{def:EO_hom:edge})-(\ref{def:EO_hom:monotone}) of Defintion \ref{def:EO_hom}) and in addition it preserves \emph{longest} paths.
\end{definition}
This last property only makes sense if longest paths are unique. Luckily, longest paths in the transitive closure of an arborescence are exactly the edges of the original tree.

\begin{lemma}\label{lem:longest paths are tree edges}
 If $T$ is an arborescence (not necessarily edge-ordered) with root $v_0$, and $G$ is its transitive closure, then an edge $u \to v$ of $G$ is an edge of $T$ iff $u \to v$ appears on a longest path $v_0 \tp v$ in $G$. Moreover, the longest path $u \tp v$ in $G$ is unique.
\end{lemma}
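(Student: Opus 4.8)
The plan is to prove the three claims of Lemma~\ref{lem:longest paths are tree edges} more or less in the order stated, leveraging the key structural fact about an arborescence $T$: every vertex $v$ has a \emph{unique} path $v_0 \tp v$ in $T$, whose length is exactly the distance $d(v)$ from $v_0$, and moreover $u \to v$ is an edge of $T$ iff $u$ is the (unique) predecessor of $v$ on this path, in which case $d(v) = d(u) + 1$. I would first record that the edges of the transitive closure $G$ are exactly the pairs $u \to v$ with $u$ a proper ancestor of $v$ in $T$ (equivalently, $u$ lies on the unique $T$-path $v_0 \tp v$, $u \neq v$), so every $G$-edge $u \to v$ satisfies $d(u) < d(v)$. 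Consequently any $G$-path $v_0 = w_0 \to w_1 \to \cdots \to w_k = v$ has strictly increasing $d$-values, so $k \le d(v)$, with equality iff each step increases $d$ by exactly $1$, i.e.\ iff each $w_i \to w_{i+1}$ is a $T$-edge. This immediately gives that longest paths $v_0 \tp v$ in $G$ have length $d(v)$ and are precisely the unique $T$-path $v_0 \tp v$ — establishing uniqueness of the longest path from the root, and the ``$\Leftarrow$'' direction of the edge characterization (being on the unique longest $v_0 \tp v$ path forces the edge to be a $T$-edge), and simultaneously ``$\Rightarrow$'' (a $T$-edge $u \to v$ sits on the $T$-path $v_0 \tp v$, which is the longest $v_0 \tp v$ path in $G$).

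For the final sentence — uniqueness of the longest path $u \tp v$ in $G$ for arbitrary $u$, not just the root — I would argue similarly but relative to $u$. If there is any $G$-path $u \tp v$ at all, then $u$ is an ancestor of $v$ in $T$, so the subtree $T_u$ of $T$ rooted at $u$ contains $v$, and $T_u$ is itself an arborescence whose transitive closure is the induced subgraph of $G$ on the descendants of $u$. Applying the root case to $T_u$: the longest $u \tp v$ path in this induced subgraph has length $d(v) - d(u)$ and equals the unique $T$-path $u \tp v$. It remains to check that no longer $G$-path $u \tp v$ can stray outside the descendants of $u$; but that is immediate, since every $G$-edge goes from an ancestor to a descendant, so once a path starts at $u$ every subsequent vertex is a descendant of $u$. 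Hence the longest $u \tp v$ path is unique and is the $T$-path.

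The only mildly delicate point — the one I would be most careful about — is keeping straight the difference between ``$u \to v$ appears on \emph{a} longest path $v_0 \tp v$'' and ``on \emph{the} longest path'': since I show the longest $v_0 \tp v$ path is unique, these coincide, but the proof should establish uniqueness before (or alongside) the edge characterization to avoid circularity. I would therefore structure the write-up as: (1) edges of $G$ strictly increase $d$; (2) any $G$-path has length $\le$ the $d$-difference of its endpoints, with equality iff all steps are $T$-edges; (3) deduce uniqueness of longest paths and both directions of the edge characterization as corollaries. Everything else is routine bookkeeping about arborescences; no serious obstacle is anticipated.
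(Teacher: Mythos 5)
Your proof is correct, and it reaches the paper's central structural fact --- that the longest paths in $G$ are exactly the paths of $T$ --- by a slightly different mechanism. The paper argues by path expansion: since each $G$-edge is a shortcut of a $T$-path of length at least one, any $G$-path $u \tp v$ can be traded for a $T$-path between the same endpoints that is no shorter, so a longest $G$-path must already consist of $T$-edges and is therefore unique (uniqueness of $T$-paths between arbitrary vertices being implicit). You instead introduce the depth function $d$ from the root as a potential: every $G$-edge strictly increases $d$, every $T$-edge increases it by exactly one, so any $G$-path $v_0 \tp v$ has length at most $d(v)$ with equality precisely when all its edges are $T$-edges. The two routes are close cousins, but yours has two small advantages: the equality analysis makes explicit the step the paper compresses into ``therefore longest paths consist entirely of edges in $T$'' (in the paper this is really a replacement argument --- a non-$T$-edge on a longest path could be expanded into a strictly longer path --- which is left to the reader), and your restriction to the subtree rooted at $u$ gives an explicit proof of uniqueness of the longest path $u \tp v$ for arbitrary $u$, which the paper obtains only implicitly from uniqueness of $T$-paths. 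The paper's expansion argument, on the other hand, avoids introducing the depth function at all and is marginally shorter. Your cautionary remark about proving uniqueness before (or together with) the edge characterization is well taken and is handled correctly in your ordering (1)--(3); no gap remains.
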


\begin{remark}
As a consequence of Lemma \ref{lem:longest paths are tree edges}, for any graph $G \in \mathbf{TArb}$, $u \to v_1 \edgel u \to v_2$ in $G$ iff $v_0 \tp v_1 \prec^s v_0 \tp v_2$ in $G$, where $v_0 \tp v$ is the unique longest path from $v_0$ to $v$ in $G$.
\end{remark}
While the presentation of depth- and breadth-first traversals has thus far been similar even at a rather small scale, this adjunction diverges from the previous one in several ways: it does not decompose into an inclusion and a functor which lifts the ordinary transitive closure, and the curious preservation longest paths by morphisms is not suggested by the pre-categorical treatment of breadth-first traversals. This category arises somewhat mysteriously, and we have no justification for introducing it other than it works. 

We now define the two functors of the adjunction. The proofs that they are 
functorial are Lemmas \ref{lem:Gamma well defined} and \ref{lem:L is well-defined} in the appendix.

\newcommand{\G}{\Gamma}

\begin{definition}
 Define the functor $\mathbf{FinArb}^<_\star \tot{\G} \mathbf{TArb}$ by $\G(V,v_0,\to) = (V,v_0,\tot{\text{trans}})$, and for any vertices $(u,v_1,v_2)$ such that $u \to v_1$ and $u \to v_2 $ in $\G(T)$, define $u \to v_1 \edgel u \to v_2$ iff $v_0 \tp v_1 \prec^s v_0 \tp v_2$ in $T$. For morphisms $T \tot{h} T'$, define $\G(h):\G(T) \to \G(T')$ by $\G(h)(v) = h(v)$.
\end{definition}

\newcommand{\Ell}{\mathrm{L}} 
 
\begin{definition}
 Define the functor $\mathbf{TArb} \tot{\Ell} \mathbf{FinArb}^<_\star$ by identifying the set of vertices of $\Ell(G)$ with the vertices of $G$, and including an edge $u \to v$ in $\Ell(G)$ iff it lies on the longest path $v_0 \tp v$ in $G$. The neighborhood order in $\Ell(G)$ is inherited from $G$, and for morphisms $G \tot{h} H$, define $\Ell(h) : \Ell(G) \to \Ell(H)$ by $\Ell(h)(v) = h(v)$.
\end{definition}

\begin{theorem}\label{thm:arb/TArb adjunction}
  There is an adjunction $\G \dashv \Ell$.
\end{theorem}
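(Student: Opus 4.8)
The plan is to establish the adjunction $\G \dashv \Ell$ by exhibiting a natural bijection
$\mathbf{TArb}(\G(T), G) \cong \mathbf{FinArb}^<_\star(T, \Ell(G))$,
following the same template used for Theorems \ref{thm:adj graphs with arbs (lex)}, \ref{thm:adj graphs with arbs (short-lex)}, and \ref{thm:lex/tlex adjunction}. Since every functor in sight acts as the identity on vertices, a morphism on either side of the putative bijection is \emph{the same underlying vertex map}; the content is entirely in checking that the defining conditions on morphisms match up. Concretely, I would first prove the key structural fact that for any edge-ordered arborescence $T$, the tree $T$ is recovered from its transitive closure $\G(T)$ by keeping exactly the edges on longest paths from $v_0$; this is precisely Lemma \ref{lem:longest paths are tree edges}, which also guarantees longest paths are unique (so that ``preserves longest paths'' is meaningful). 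Consequently $\Ell(\G(T)) = T$ on the nose, and dually, for $G \in \mathbf{TArb}$ witnessed by an arborescence $T$, we have $\Ell(G) = T$ and $\G(\Ell(G)) = G$ — the neighborhood order of $\G(\Ell(G))$ is determined by the shortlex order on unique paths $v_0 \tp v$ in $\Ell(G) = T$, which is exactly the neighborhood order of $G$ by the defining clause of $\mathbf{TArb}$ (using the Remark after Lemma \ref{lem:longest paths are tree edges}). So the unit and counit are identities on vertices.

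The heart of the argument is the correspondence of morphisms. Given $T \tot{f} \Ell(G)$ in $\mathbf{FinArb}^<_\star$, I must check that the same vertex map defines a morphism $\G(T) \to G$ in $\mathbf{TArb}$: it must (i)--(iii) be an edge-ordered graph homomorphism and (iv) preserve longest paths. Edge preservation is immediate since edges of $\G(T)$ are paths in $T$, which $f$ sends to paths in $\Ell(G)$, hence to edges in $G$ (transitivity); pointedness is inherited. For neighborhood-monotonicity I would use that $f : T \to \Ell(G)$ preserves $\prec^s$ on co-initial paths — this is Lemma \ref{lem:EO homs preserve lex order} applied to $f$ as an edge-ordered graph homomorphism of arborescences — so if $v_0 \tp v_1 \prec^s v_0 \tp v_2$ in $T$ then $f(v_0 \tp v_1) \prec^s f(v_0 \tp v_2)$; since these are shortlex-least (indeed unique) paths $f(v_0) \tp f(v_i)$ in $\Ell(G)$, and shortlex order on unique tree-paths gives the neighborhood order of $G = \G(\Ell(G))$, monotonicity on neighborhoods of $\G(T)$ follows. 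Preservation of longest paths is the crucial extra clause: a longest path $v_0 \tp v$ in $\G(T)$ is the unique tree path in $T$, $f$ carries it to a path in $\Ell(G) = T'$ which is again a unique tree path, hence (by Lemma \ref{lem:longest paths are tree edges}) the longest path in $G$. Conversely, given $\G(T) \tot{g} G$ in $\mathbf{TArb}$, restricting to $\Ell$ gives $\Ell(\G(T)) = T \to \Ell(G)$; I must check it is an edge-ordered \emph{graph} homomorphism of arborescences — edges of $T$ are longest-path edges of $\G(T)$, which $g$ preserves, landing on longest-path edges of $G$, i.e.\ edges of $\Ell(G)$; neighborhood monotonicity transfers similarly. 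These two assignments are mutually inverse because they do not change the underlying vertex map and the morphism conditions on both sides, once unwound, are logically equivalent.

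Finally, naturality in $T$ and $G$ is automatic: for $T' \tot{\alpha} T$ and $G \tot{\beta} G'$, the two squares commute because precomposition, postcomposition, and all the functors involved leave vertex maps unchanged, and — exactly as in the proof of Theorem \ref{thm:adj graphs with arbs (lex)} — morphisms in these categories that agree on all vertices are equal. The main obstacle I anticipate is not any of this routine bookkeeping but verifying the \emph{well-definedness} that the morphism correspondence implicitly leans on — namely that the shortlex order on unique tree-paths genuinely agrees with the neighborhood order of objects of $\mathbf{TArb}$ and of $\G(\Ell(G))$, and that ``preserves longest paths'' is stable under the correspondence; these rest on Lemma \ref{lem:longest paths are tree edges} (uniqueness and characterization of longest paths), so I would make sure that lemma is in hand before assembling the bijection.
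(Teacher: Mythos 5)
Your proposal is correct and follows essentially the same route as the paper's proof: identity-on-vertices correspondence between $\mathbf{TArb}(\G(T),G)$ and $\mathbf{FinArb}^<_\star(T,\Ell(G))$, with well-definedness in each direction resting on Lemma \ref{lem:longest paths are tree edges} (longest paths are exactly the tree edges and are unique), Lemma \ref{lem:EO homs preserve lex order} for neighborhood monotonicity via the $\prec^s$ order on unique tree paths, and the remark following Lemma \ref{lem:longest paths are tree edges} to translate that order back into the edge order of objects of $\mathbf{TArb}$; bijectivity and naturality are then immediate since all maps agree on vertices. The only cosmetic difference is that the paper checks preservation of longest paths between arbitrary pairs $u \tp v$ rather than only paths from $v_0$, but your argument applies verbatim in that generality.
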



\section{Putting it all together}\label{sec:putting}

Suppose we fix a finite, pointed, edge-ordered graph, locate it in either $\mathbf{FinGraph}^l_\star$ or $\mathbf{FinGraph}^s_\star$, then apply the appropriate least-path tree and the transitive closure functors. Then the edge ordering of the distinguished vertex in the result is the lexicographic depth-first or breadth-first traversal respectively.

\begin{lemma}\label{lem:correctness of construction of DFT}
Given any $G \in \mathbf{FinGraph}^l_\star$, let $T = (\F \circ \I' \circ \T)(G)$. Then the neighborhood ordering $\edgel$ on the distinguished point in $T$ is the lexicographic depth-first traversal of $G$.
\end{lemma}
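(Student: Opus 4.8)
The plan is to unwind the two functors and show that the neighborhood order on $v_0$ in the final graph $T$ records exactly the relation $v \lexdft w$. By Theorem~\ref{thm:alternate-formulation}, this traversal coincides with $\prec_D$, i.e. $v \lexdft w \iff \min(v_0 \tp v) \prec \min(v_0 \tp w)$ (paths taken in $G$). So it suffices to prove: for all vertices $v,w$, the edge $v_0 \to v$ precedes $v_0 \to w$ in the neighborhood order of $T$ if and only if $\min(v_0 \tp v) \prec \min(v_0 \tp w)$ in $G$. The first thing to check is that this statement even typechecks: every vertex $v \ne v_0$ must actually be a neighbor of $v_0$ in $T = \F(\I'(\T(G)))$. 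This is clear because $\T(G)$ is an arborescence rooted at $v_0$, so there is a (unique, nonempty) path $v_0 \tp v$ in it for every $v$, and $\F$ transitively closes the edge relation, adding the edge $v_0 \to v$ whenever such a path has length $\ge 1$.

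Next I would track the neighborhood order through the two functors. By the definition of $\F$, for $v_1,v_2 \in N_T(v_0)$ we have $v_0 \to v_1 \edgel v_0 \to v_2$ in $\F(\I'(\T(G)))$ iff $\min(v_0 \tp v_1) \prec \min(v_0 \tp v_2)$ in the lex-graph $\I'(\T(G))$. Now $\I'$ is an inclusion and does nothing, and $\T(G)$ is an arborescence, so the unique path $v_0 \tp v$ in $\T(G)$ is automatically its lex-minimal path; call it $\pi_v$. So the condition becomes $\pi_{v_1} \prec \pi_{v_2}$, where $\prec$ here is the lexicographic path order \emph{of the graph $\T(G)$}, computed from the neighborhood orders of $\T(G)$.

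The crux is then to show that $\pi_v = \min(v_0 \tp v)$ computed \emph{in $G$}, i.e. that the unique $v_0$-rooted path to $v$ in the least-path tree $\T(G)$ is precisely the lex-least $v_0 \tp v$ path in $G$ — and moreover that the path order $\prec$ on these paths is the same whether computed in $\T(G)$ or in $G$. The first half is essentially the content of the construction of $\T$: an edge $u \to v$ is kept in $\T(G)$ iff it lies on $\min(v_0 \tp v)$ in $G$, so by an induction on path length (using Remark~\ref{rem:path_order props}, particularly that prefixes of lex-minimal paths are lex-minimal, together with the fact that $\min(v_0 \tp v)$ passes through $\min(v_0 \tp u)$ where $u = \dfp$-style predecessor of $v$ on that path — this is Lemma~\ref{lem:canonical paths are minimal}) one shows $\min(v_0 \tp v)$ is a path in $\T(G)$, hence equals the unique such path $\pi_v$. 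For the second half, since $\T(G)$ retains the neighborhood orders of $G$ on the surviving edges (Definition~\ref{def:functor T}), and since comparing two co-initial paths under $\prec$ only ever inspects the neighborhood order at their first point of divergence, the value of $\pi_{v_1} \prec \pi_{v_2}$ is the same computed in $\T(G)$ or in $G$; combining, $v_0 \to v_1 \edgel v_0 \to v_2$ in $T$ iff $\min(v_0 \tp v_1) \prec \min(v_0 \tp v_2)$ in $G$, iff $v_1 \lexdft v_2$, as desired.

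The main obstacle I anticipate is the inductive argument that $\min(v_0 \tp v)$ is actually a path inside $\T(G)$ (equivalently, that the least-path tree, though defined edge-by-edge via a membership condition, is genuinely closed under taking the initial segments of least paths). The subtlety is that a priori the final edge of $\min(v_0 \tp v)$ lies on this least path, but one must argue that the \emph{penultimate} vertex $u$ has its own least path $\min(v_0 \tp u)$ equal to the corresponding prefix — this is where prefix-minimality (Remark~\ref{rem:path_order props}\eqref{def:extends sqsubet} together with left-cancellation and the fact that a prefix of a $\prec$-minimal path must itself be $\prec$-minimal among co-final-at-the-intermediate-vertex paths) does the work, and it should already be encapsulated in Lemma~\ref{lem:canonical paths are minimal} and the well-definedness Lemma~\ref{lem:theta well defined}. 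Everything else is bookkeeping about the definitions of $\F$, $\I'$, and $\prec$.
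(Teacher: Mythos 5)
Your proposal is correct and follows essentially the same route as the paper's proof: unwind the definition of $\F$ to reduce the neighborhood order at $v_0$ to comparison of the unique $v_0$-rooted paths in $\T(G)$, identify these with $\min(v_0 \tp v)$ computed in $G$ (using prefix-closedness of lex-minimal paths, implicit in Lemma~\ref{lem:theta well defined}), note the order is inherited since $\T(G)$ keeps $G$'s neighborhood orders, and conclude via Definition~\ref{def:prec D} and Theorem~\ref{thm:alternate-formulation}. Your treatment is merely more explicit than the paper's at the step where the two path orders are identified.
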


\begin{lemma}\label{lem:correctness of construction of BFT}
Given any $G \in \mathbf{FinGraph}^s_\star$, let $T = (\G \circ S)(G)$. Then the neighborhood ordering $\edgel$ on the distinguished point in $T$ is the lexicographic breadth-first traversal of $G$.
\end{lemma}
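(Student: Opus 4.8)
The plan is to unwind the composite $\G \circ S$ at the distinguished vertex and then invoke the alternate characterization of $\lexbft$ from Corollary~\ref{cor:bf canonical paths are minimal}. Fix $G \in \mathbf{FinGraph}^s_\star$ with distinguished vertex $v_0$ and vertex set $V$. By Lemma~\ref{lem:S well defined} the graph $S(G)$ is a finite, connected, edge-ordered arborescence on $V$, and by Lemma~\ref{lem:Gamma well defined} the object $T = \G(S(G))$ of $\mathbf{TArb}$ is well-defined.

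First I would identify the unique path $v_0 \tp v$ inside the arborescence $S(G)$ with $\min^s(v_0 \tp v)$, the lexicographically least shortest path of $G$. It suffices to show every edge of $\pi := \min^s(v_0 \tp v)$ lies in $S(G)$, i.e.\ that each prefix $\alpha$ of $\pi$ ending at a vertex $b$ satisfies $\alpha = \min^s(v_0 \tp b)$: otherwise, replacing $\alpha$ by a strictly $\prec^s$-smaller path to $b$ and re-appending the remainder of $\pi$ (then deleting any vertex repetitions this creates) would produce a proper path $v_0 \tp v$ that is $\prec^s$-smaller than $\pi$ --- using that the edge order of $S(G)$ is the restriction of that of $G$ and the right-multiplication property of $\prec$ (Remark~\ref{rem:path_order props}(\ref{rem:right mult})) --- contradicting minimality of $\pi$. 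Hence $\min^s(v_0 \tp v)$ is a path $v_0 \tp v$ in $S(G)$, so by uniqueness it is \emph{the} one; by Corollary~\ref{cor:bf canonical paths are minimal}(1) it equals $\mathfrak{Q}v$.

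Next I would read off the neighborhood order of $v_0$ in $T$. Since $S(G)$ is a connected arborescence on $V$, every $v \ne v_0$ has a path of positive length from $v_0$, hence $v_0 \to v$ in the transitive closure, while acyclicity forbids a loop $v_0 \to v_0$; thus $N_T(v_0) = V \setminus \{v_0\}$. By the definitions of $\G$ and of $\mathbf{TArb}$, $v_0 \to v_1 \edgel v_0 \to v_2$ in $T$ iff $(v_0 \tp v_1) \prec^s (v_0 \tp v_2)$ in $S(G)$; since the edge order of $S(G)$ is inherited from $G$, this $\prec^s$ agrees with the ambient one, so by the previous paragraph the condition becomes $\mathfrak{Q}v_1 \prec^s \mathfrak{Q}v_2$. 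By the alternate characterization of the lexicographic breadth-first traversal recorded after Corollary~\ref{cor:bf canonical paths are minimal}, namely $u \lexbft v \iff \mathfrak{Q}u \prec^s \mathfrak{Q}v$, this says exactly $v_1 \lexbft v_2$. Therefore $\edgel$ on $N_T(v_0)$ coincides with $\lexbft$ restricted to $V \setminus \{v_0\}$; since $v_0$ is $\lexbft$-least, prepending it recovers the traversal on all of $V$, as claimed.

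The only step that is more than bookkeeping is the first: that the $S(G)$-path from $v_0$ to $v$ is the shortlex-minimal path of $G$. The subtlety is that splicing a shorter or lex-smaller prefix onto a suffix of $\pi$ need not produce a proper path, so one must argue via pruning that the resulting proper path is still strictly $\prec^s$-below $\pi$ --- the length-shortening case is immediate, and the equal-length case uses preservation of $\prec$ under right multiplication. (This is essentially the content underlying the proof of Lemma~\ref{lem:S well defined}.) Everything else is a direct substitution of the definitions of $S$ and $\G$ into Corollary~\ref{cor:bf canonical paths are minimal}.
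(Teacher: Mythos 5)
Your proposal is correct and follows essentially the same route as the paper: unwind the definitions of $\G$ and $S$ at the distinguished vertex and invoke Corollary~\ref{cor:bf canonical paths are minimal}. The only difference is that you spell out the prefix-closure/splicing argument identifying the unique $S(G)$-path $v_0 \tp v$ with $\min^s(v_0 \tp v)$, a step the paper treats as immediate from the definition of $S$ (and as implicit in Lemma~\ref{lem:S well defined}).
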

Finally, we show that these traversals can be extracted as vertex orders rather than edge orders, by defining a functor that takes an edge-ordered graph into the ordered neighborhood of its distinguished point. Let $\mathbf{FinLoset}$ denote the category of finite linearly ordered sets with monotone functions.

\begin{lemma}\label{lem:fingraph to finloset} 
There is a functor $\E : \mathbf{FinGraph}^<_\star \to \mathbf{FinLoset}$ that linearly orders the vertices in a given graph according to the ordering on $N(v_0)$.
\end{lemma}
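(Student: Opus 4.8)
The plan is to define $\E$ explicitly on objects and morphisms, and then to check functoriality, with the main work being the verification that a homomorphism of edge-ordered graphs actually induces a monotone \emph{bijection} on the vertex sets reachable-and-indexed-by $N(v_0)$ — or, more precisely, that the induced map on the chosen linear orders is well-defined and monotone. On objects, given $G \in \mathbf{FinGraph}^<_\star$ with distinguished vertex $v_0$, I would set $\E(G)$ to be the underlying vertex set $V$ of $G$ equipped with the linear order $<$ induced from the neighborhood order $\edgel$ on $N(v_0)$ in the evident way: declare $v_0$ to be the least element, and for $v_1, v_2 \neq v_0$ put $v_1 < v_2$ iff $v_0 \to v_1 \edgel v_0 \to v_2$. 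For this to be a \emph{linear} order on all of $V$ I need every vertex other than $v_0$ to lie in $N(v_0)$; this is exactly the situation after applying the transitive-closure functors of Lemmata \ref{lem:correctness of construction of DFT} and \ref{lem:correctness of construction of BFT}, so the intended domain of $\E$ in applications is the image of those functors. (If one wants $\E$ on all of $\mathbf{FinGraph}^<_\star$ one instead orders only the vertices of $N(v_0) \cup \{v_0\}$; I would state it for whichever convention the paper uses downstream in Section \ref{sec:putting}, namely restricted to transitively closed graphs where $N(v_0) \cup \{v_0\} = V$.)

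On morphisms, given $h : G \to H$ in $\mathbf{FinGraph}^<_\star$, I would set $\E(h) = h$ as a function on vertices. The key claim is that $h : \E(G) \to \E(H)$ is monotone. By clause (\ref{def:EO_hom:point}) of Definition \ref{def:EO_hom}, $h$ sends $v_0^G$ to $v_0^H$ and no other vertex to $v_0^H$, so $h$ sends the least element to the least element and nothing else to it. For $v_1 < v_2$ in $\E(G)$ with both distinct from $v_0$, we have $v_0 \to v_1 \edgel v_0 \to v_2$ in $G$; since both are neighbors of $v_0$ (again using transitivity of the domain, so that the relevant edges exist), clause (\ref{def:EO_hom:monotone}) gives $h(v_0) \to h(v_1) \edgel h(v_0) \to h(v_2)$ in $H$, i.e. $v_0^H \to h(v_1) \edgel v_0^H \to h(v_2)$, hence $h(v_1) < h(v_2)$ in $\E(H)$ — noting $h(v_1) \neq v_0^H \neq h(v_2)$ by clause (\ref{def:EO_hom:point}). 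Thus $\E(h)$ is a monotone function, i.e. a morphism of $\mathbf{FinLoset}$.

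Functoriality is then immediate: $\E(\mathrm{id}_G) = \mathrm{id}_G = \mathrm{id}_{\E(G)}$ and $\E(h' \circ h) = h' \circ h = \E(h') \circ \E(h)$, since $\E$ acts as the identity on the underlying functions and composition in $\mathbf{FinGraph}^<_\star$ is composition of functions. The one genuine obstacle — and the step I would be most careful about — is the well-definedness of the order on $\E(G)$: it is a linear order on the full vertex set only because, in the setting where $\E$ is applied, the graph is transitively closed with connected source $v_0$, forcing $N(v_0) = V \setminus \{v_0\}$; outside that setting one must either restrict the object to $N(v_0) \cup \{v_0\}$ or phrase $\E$ as landing in preorders. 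I would make this hypothesis explicit in the statement or restrict the domain accordingly, and otherwise the proof is the routine unwinding above.
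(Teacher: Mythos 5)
Your proposal is correct and takes essentially the same route as the paper: $\E$ is the identity on vertices of morphisms, monotonicity of $\E(h)$ follows from clauses (\ref{def:EO_hom:point}) and (\ref{def:EO_hom:monotone}) of Definition \ref{def:EO_hom}, and functoriality is immediate. The paper resolves the domain issue you flag via your parenthetical alternative: $\E(G)$ is defined as the ordered set $(\{v_0\} \cup N(v_0), <)$ with $v_0$ least, so the functor lives on all of $\mathbf{FinGraph}^<_\star$ with no transitivity hypothesis, and it orders the full vertex set only in the transitively closed graphs used downstream in Section \ref{sec:putting}.
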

Since there are inclusions $\mathbf{TLexGraph} \to \mathbf{FinGraph}^<_\star$ and $\mathbf{TArb} \to \mathbf{FinGraph}^<_\star$, we get

\begin{corollary}
There are functors $\mathbf{FinGraph}^l_\star \to \mathbf{FinLoset}$ and $\mathbf{FinGraph}^s_\star \to \mathbf{FinLoset}$ which compute the lexicographic depth-first and breadth-first traversals respectively.
\end{corollary}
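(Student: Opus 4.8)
The plan is to exhibit the two functors as composites of constructions already established in the paper, and to deduce their correctness from Lemmas~\ref{lem:correctness of construction of DFT}, \ref{lem:correctness of construction of BFT} and~\ref{lem:fingraph to finloset}. Once those are in hand, the corollary is essentially bookkeeping.

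For the depth-first side I would take the composite $\E \circ \iota_l \circ \F \circ \I' \circ \T$, where $\T : \mathbf{FinGraph}^l_\star \to \mathbf{FinArb}^<_\star$, $\I' : \mathbf{FinArb}^<_\star \to \mathbf{LexGraph}$, and $\F : \mathbf{LexGraph} \to \mathbf{TLexGraph}$ are the functors of Sections~\ref{sec:least dfs trees} and~\ref{sec:traversals_as_transitive_lex_graphs}, $\iota_l : \mathbf{TLexGraph} \hookrightarrow \mathbf{FinGraph}^<_\star$ is the inclusion (obtained by composing the inclusions of Figure~\ref{fig:category zoo}), and $\E : \mathbf{FinGraph}^<_\star \to \mathbf{FinLoset}$ is the functor of Lemma~\ref{lem:fingraph to finloset}. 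For the breadth-first side I would take $\E \circ \iota_s \circ \G \circ S$, where $S : \mathbf{FinGraph}^s_\star \to \mathbf{FinArb}^<_\star$ and $\G : \mathbf{FinArb}^<_\star \to \mathbf{TArb}$ are the functors of Sections~\ref{sec:least bfs trees} and~\ref{sec:short-lex transitive closure} and $\iota_s : \mathbf{TArb} \hookrightarrow \mathbf{FinGraph}^<_\star$ is the stated inclusion. Each factor is already known to be a functor between the indicated categories, so both composites are functors $\mathbf{FinGraph}^l_\star \to \mathbf{FinLoset}$ and $\mathbf{FinGraph}^s_\star \to \mathbf{FinLoset}$ respectively; nothing beyond composability needs to be checked at this step.

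It then remains to see that these composites compute the claimed traversals. Fix $G \in \mathbf{FinGraph}^l_\star$ and let $T = (\F \circ \I' \circ \T)(G)$. By Lemma~\ref{lem:correctness of construction of DFT}, the neighborhood order $\edgel$ on the distinguished vertex $v_0$ of $T$ is the lexicographic depth-first traversal of $G$. Since $T$ is the transitive closure of a connected pointed arborescence, $v_0 \to v$ holds for every vertex $v \neq v_0$, so $N(v_0)$ ranges over all non-root vertices; applying $\E$ converts the $\edgel$-order on $N(v_0)$ into the associated linear order on the full vertex set (with $v_0$ least), which is precisely the lexicographic depth-first traversal of $G$ in the sense of Definition~\ref{def:lex bf and df traversals}. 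The breadth-first case is verbatim the same, using Lemma~\ref{lem:correctness of construction of BFT} in place of Lemma~\ref{lem:correctness of construction of DFT} and $S, \G, \mathbf{TArb}$ in place of $\T, \I', \F, \mathbf{TLexGraph}$.

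The only point that genuinely requires care is the interface between the edge-ordered description and the vertex-ordered one: one must check that $\E$ applied to $T$ really returns a linear order on \emph{all} of $G$'s vertices and that this order coincides with the traversal as originally defined. This hinges on the observation that transitivity together with connectedness forces $v_0$ to dominate every other vertex, so that the $\edgel$-order on $N(v_0)$ already exhausts the vertex set, together with the convention (built into Lemma~\ref{lem:fingraph to finloset}) that $v_0$ is placed at the bottom. Everything else — functoriality of the individual pieces, composability of the chains, and the identification of the edge order at $v_0$ with the traversal — is either immediate or has already been carried out in the cited lemmas.
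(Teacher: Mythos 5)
Your proposal is correct and matches the paper's (largely implicit) argument: the paper obtains the corollary exactly by composing $\E$ with the inclusions $\mathbf{TLexGraph} \to \mathbf{FinGraph}^<_\star$, $\mathbf{TArb} \to \mathbf{FinGraph}^<_\star$ and the composites $\F \circ \I' \circ \T$ and $\G \circ S$, with correctness supplied by Lemmas~\ref{lem:correctness of construction of DFT}, \ref{lem:correctness of construction of BFT} and~\ref{lem:fingraph to finloset}. Your extra observation that transitive closure of the rooted arborescence makes $N(v_0)$ exhaust the non-root vertices, so $\E$ really returns the traversal as a vertex order, is a worthwhile detail the paper leaves tacit.
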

Note that parts of this final step $\mathbf{FinGraph}^<_\star \to \mathbf{FinLoset}$ can also be made universal, but since we still have to accept the presence of inclusion functors without adjoints, we only sketch this construction: The functor $E$ from Lemma~\ref{lem:fingraph to finloset} restricts to a functor from the category of \emph{transitive} finite, pointed, edge-ordered graphs and their homomorphisms, and into the category of finite \emph{nonempty} linearly ordered sets with monotone functions \emph{which additionally preserve the least element}. This restricted functor has a left adjoint given by mapping a finite linearly ordered set $(V,<)$ to the graph with vertices $V$, distinguished vertex the least element $v_0$ of $V$ (guaranteed to exist when $V$ is finite), and edge relation given by $v_0 \to v$ for all $v \in V$ with $v \neq v_0$. This specializes to neither $\mathbf{TLexGraph}$ nor $\mathbf{TArb}$, however, since the counit of this adjunction need not preserve either least or longest paths.

\section{Discussion and open questions}\label{sec:discussion}
We have described a construction of depth-first traversals from a category of finite edge-ordered graphs whose morphisms preserve least paths as a series of universal constructions, and in analogous fashion, a construction of breadth-first traversals from a category of finite edge-ordered graphs whose morphisms preserve least shortest paths. 

This begs the question of whether there is way of generalizing these two constructions using a general notion of extremal path. Such a method is suggested by Delatorre and Kruskal \cite{TK95} using the machinery of a closed semiring system, which provides a general setting for several algebraic path problems, not only lexicographic breadth-first and depth-first search, but \emph{lexicographic topological search} as well. 

The deepest question raised by our work is whether there is any sense in which an algorithm can be recovered from the categorical presentation of the problem which it solves. For example, do the universal constructions in our factorization admit greedy algorithms? Do they allow us to recover the parallel traversal algorithms of \cite{TK95}? Can we infer that sequential depth-first and breadth-first search require LIFO and FIFO (i.e., stack and queue) data structures respectively?

Positive answers to any of these questions would be strong evidence that the ``categorical structure theory of algorithms'' mentioned in the introduction is actually a viable program; that it can not only describe problems, but suggest ways to solve them---in the elegant characterization of \cite{Abr20}, to \emph{lead} rather than to \emph{follow}.

\bibliographystyle{plainurl}
\bibliography{bibliography}

\newpage

\section*{Appendix: proofs from sections \ref{sec:bg} and \ref{sec:alternate}}

Here we prove the results from these sections as well as some auxiliary definitions and lemmas which are required for the proofs of these results, not their statements.

When reasoning about depth- and breadth-first searches (Definitions \ref{def:depth-first search} and \ref{def:breadth-first search}), the notion of \emph{stages} is important:

\begin{definition}\label{def:stage i}
 Suppose a graph $G$ has $n$ vertices. For a specific computation of depth- or breadth-first traversal over $G$, label the vertices $v_0,\dots,v_{n-1}$ according to the order in which they are added to $L$. (Equivalently, in the order of the traversal defined by this computation.)
 
 For $0 \le i < n$ \emph{stage $i$} refer to the pass of the while loop during which $v_i$ is removed from $S$ or $Q$ and added to $L$. Let $S_i$, respectively $Q_i$, and $L_i$ be the values of $S$, respectively $Q$, and $L$ immediately before stage $i$. Notice that $v_i$ is the top of $S_i$, respectively the front of $Q_i$, and $L_i = (v_0,\dots,v_{i-1})$.
\end{definition}

Even though there are other passes through the while loop besides the labeled stages, no new vertices are added to $S$, $Q$, or $L$ during these.

\begin{proof}[Proof of Lemma \ref{lem:FO characterization of DFT}]
 Suppose $G$ has $n$ vertices, and let $(v_0,v_1,\dots,v_{n-1})$ be the vertices of $G$ according to $\cdot < \cdot$. Fix a computation of depth-first search computing the traversal $<$. 
 
 Consider three vertices $u < v < w$ such that $u \to w$. Let $i$, $j$, and $k$ satisfy $u = v_i$, $v = v_j$ and $w = v_k$; then $0 \le i < j < k \le n-1$. Notice that we push a new occurrence of $w$ onto $S$ at stage $i$,: as $w \in N(v_i)$ and $w \not \in L_i$, $w \in \partial v_i$.

 Notice that $v < w$ in the traversal ordering, which means that by the stage ($j$) that $v$ appears at the top of $S$, $w$ has never appeared at the top of $S$. This means one of two things: either $v$ appears and is discharged from $S$ before $w$ is ever pushed onto it, or $v$ and $w$ co-exist on the stack $S$ simultaneously. 
 
 The first case does not hold: $w$ is pushed onto $S$ at stage $i$, but $v$ has not been discharged from $S$ at this point, or else it would occur in $L_{i+1} = (v_0,\dots,v_i)$. 
 
 Therefore, $v$ and $w$ occur in $S$ simultaneously. Consider all the stages $\ell < j$, before either $v$ or $w$ appears on top of $S$. Since $S$ is a last-in first-out data structure and since $v$ appears on top first, we conclude that for every stage $\ell < j$ at which $w$ gets pushed onto $S$, there is another stage $\ell \le \ell' < j$ at which $v$ gets pushed onto $S$.
 
 In particular, consider $\ell'$ when $\ell = i$, and let $v' = v_{\ell'}$. Then $u \le v'$ (since $i \le \ell'$), $v' < v$ (since $\ell' < j$), and $v' \to v$ (since $v$ is pushed onto $S$ at stage $\ell'$). This is exactly what we wanted to show.
\end{proof}

\begin{proof}[Proof of \ref{lem:FO characterization of BFT}]
 Suppose $G$ has $n$ vertices, and let $(v_0,v_1,\dots,v_{n-1})$ be the vertices of $G$ according to $\cdot < \cdot$. Fix a computation of depth-first search computing the traversal $<$. 
 
 Consider three vertices $u < v < w$ such that $u \to w$. Let $i$, $j$, and $k$ satisfy $u = v_i$, $v = v_j$ and $w = v_k$; then $0 \le i < j < k \le n-1$. Notice that we enqueue a new occurrence of $w$ onto $Q$ at stage $i$,: as $w \in N(v_i)$ and $w \not \in L_i$, $w \in \partial v_i$.

 Notice that $v < w$ in the traversal ordering, which means that by the stage ($j$) that $v$ appears at the front of $Q$, $w$ has never appeared at the front of $Q$. This means one of two things: either $v$ is added to and discharged from $Q$ before $w$ is ever pushed onto it, or $v$ and $w$ co-exist on $Q$ simultaneously.
 
 The first case does not hold: $w$ is enqueued at stage $i$, but $v$ has not been dequeued at this point, or else it would occur in $L_{i+1} = (v_0,\dots,v_i)$. 
 
 Therefore, $v$ and $w$ occur in $S$ simultaneously. Consider all the stages $\ell < j$, before either $v$ or $w$ appears at the front of $Q$. Since $Q$ is a first-in first-out data structure and since $v$ appears in front first, we conclude that for every stage $\ell < j$ at which a copy $w$ is enqueued, there is another stage $\ell' \le \ell$ at which a copy of $v$ is enqueued.
 
 In particular, consider $\ell'$ when $\ell = i$, and let $v' = v_{\ell'}$. Then $v' \le u$ (since $\ell' \le i$) and $v' \to v$ (since $v$ is enqueued at stage $\ell'$). This is exactly what we wanted to show.
\end{proof}

The following is an important algorithmic characterization of $\bfp$ and $\dfp$ which we need in subsequent proofs.

\begin{lemma}\label{lem:characterization of preds}
Suppose $\cdot < \cdot$ is a depth-first traversal, respectively breadth-first traversal, on a finite connected graph $G$ with $n$ vertices. Consider the execution of depth-first search, respectively breadth-first search, computing $\cdot < \cdot$. 
For each such $0 \le i < n$, $v_i$ occurs the top of $S_i$, respectively front of $Q_i$. Let $j < i$ be the stage at which that copy of $v_i$ was added to $S$, respectively $Q$. Then $v_j = \dfp(v_i)$, respectively $v_j = \bfp(v_i)$.
\end{lemma}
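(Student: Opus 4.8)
The plan is to prove both halves simultaneously, since they differ only in swapping LIFO for FIFO, and to base everything on one elementary observation about the loop body. \emph{Claim:} once \emph{any} copy of a vertex $w$ is pushed onto $S$ (respectively enqueued onto $Q$), no copy of $w$ is removed from $S$ (respectively $Q$) before stage $k$, where $v_k = w$. Indeed, the first time a copy of $w$ reaches the top of $S$ (front of $Q$) and is popped (dequeued), $w$ is not yet in $L$, so the loop body appends $w$ to $L$; by the definition of stages, that moment \emph{is} stage $k$. Consequently every copy of $w$ that was ever pushed/enqueued is still present in $S_k$/$Q_k$, and all such copies occur there in the order in which they entered.

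I would then dispatch the depth-first case (assuming $i \ge 1$; for $i = 0$ the statement is vacuous and $\dfp(v_0) = v_0$ by convention). Let $j$ be the stage at which the copy of $v_i$ currently on top of $S_i$ was pushed; such a $j$ exists because vertices enter $S$ only during stages. Since $v_i$ is pushed at stage $j$, it lies in $\partial v_j = N(v_j)\setminus L_{j+1}$, so $v_j \to v_i$ and $v_i\notin\{v_0,\dots,v_j\}$, i.e.\ $j < i$; hence $v_j$ is a $<$-predecessor of $v_i$. For maximality, suppose toward a contradiction that some $v_k$ with $j < k < i$ also satisfies $v_k \to v_i$. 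Then $v_i\notin L_{k+1}$ (as $i > k$), so a fresh copy of $v_i$ is pushed at stage $k$; being pushed after stage $j$ it sits strictly above the stage-$j$ copy, and by the Claim neither copy is popped before stage $i$. Hence at stage $i$ the stage-$j$ copy is not on top of $S_i$ --- contradicting the choice of $j$. Thus no predecessor of $v_i$ lies strictly between $v_j$ and $v_i$, so $v_j = \dfp(v_i)$.

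The breadth-first case is the FIFO mirror image. Again let $j$ be the stage at which the copy of $v_i$ at the front of $Q_i$ was enqueued; as before $v_j \to v_i$ and $j < i$. Now suppose some $v_k$ with $k < j$ (hence $k < i$) satisfies $v_k \to v_i$. Then $v_i\notin L_{k+1}$, so a copy of $v_i$ is enqueued at stage $k$, i.e.\ strictly \emph{before} the stage-$j$ copy; since $Q$ is first-in-first-out and, by the Claim, no copy of $v_i$ is dequeued before stage $i$, this earlier copy is still ahead of the stage-$j$ copy in $Q_i$, so the stage-$j$ copy is not at the front --- contradiction. Hence $v_j$ is the $<$-least predecessor of $v_i$, i.e.\ $v_j = \bfp(v_i)$. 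The only real labor here is the bookkeeping that surviving copies keep their relative positions in the stack/queue, which the Claim makes immediate; I do not expect a genuine obstacle, as this is essentially the race-to-the-top/front argument already used for Lemmas~\ref{lem:FO characterization of DFT} and~\ref{lem:FO characterization of BFT}, now tracked for the single winning copy rather than for an arbitrary one.
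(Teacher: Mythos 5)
Your proof is correct and follows essentially the same route as the paper's: tag each copy of $v_i$ with the stage at which it entered $S$ or $Q$ and use the LIFO/FIFO discipline to rule out any closer (DFS) or earlier (BFS) predecessor. Your explicit Claim that no copy of $v_i$ is removed before stage $i$ just spells out a step the paper leaves implicit, so there is nothing to change.
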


\begin{proof}
 Mark every occurrence of $v_i$ that is pushed onto $S$ or $Q$ with the stage at which it was pushed on; hence, the copy above appears as $(v_i,j)$. 
 
 In the depth-first case: suppose there were some $j < j' < i$ such that $v_{j'} \to v_i$; in this case, $(v_i,j')$ is pushed onto $S$ above $(v_i,j)$. But because of the last-in first-out property of $S$, $(v_i,j')$ would appear on top of $S$ before $(v_i,j)$, which contradicts the definition of $(v_i,j)$ as the first occurrence of $v_i$ to appear on top of $S$.
 Hence $v_j$ is the $<$-\emph{greatest} (i.e., latest) preceding neighbor of $v_i$, which is what we wanted to show.
 
In the breadth-first case: suppose there were some $j' < j$ such that $v_{j'} \to v_i$; in this case, $(v_i,j')$ is pushed onto $Q$ before $(v_i,j)$. But because of the first-in first-out property of $S$, $(v_i,j')$ would appear on top of $Q$ before $(v_i,j)$, which contradicts the definition of $(v_i,j)$ as the first occurrence of $v_i$ to appear in the front of $Q$.
 Hence $v_j$ is the $<$-\emph{least} (i.e., earliest)  preceding neighbor of $v_i$, which is what we wanted to show.
\end{proof}

\begin{proof}[Proof of Lemma \ref{lem:bf canonical paths are minimal}]
Let $(v^0,v^1,v^2,\dots,v^{\ell-1})$ be the vertices of $\pi$ in reverse order, where $v^0 = v$ and $v^{\ell-1} = v_0$. Assume by contradiction that $\ell < |\mathfrak{Q}u|$, and let $u^i = \bfp^i(u)$ for $0 \le i \le \ell-1$.

We claim that for all $0 \le i \le \ell-1$, $u^i \lexbfte v^i$. The base case is $u \lexbfte v$. Since $\bfp$ respects $\lexbfte$, $u^i \lexbfte v^i \implies u^{i+1} \lexbfte \bfp(v^i) \lexbfte v^{i+1}$, the second inequality coming from $v^{i+1} \to v_i$.
In particular, $u^{\ell-1} \lexbfte v^{\ell-1} = v_0$, which contradicts $\ell < |\mathfrak{Q}u|$.

For the second claim, assume by contradiction that there exist some $u$ and $v$ and a path $\pi : v_0 \tp v$ such that $u \lexbfte v$, $|\mathfrak{Q}u|=|\pi|$, but $\pi \prec \mathfrak{Q}u$. We may assume that $u$ is the $\lexbft$-least vertex to occur in such a pair, and moreover that neither $u$ nor $v$ is $v_0$. 

Let $u'$ and $v'$ be the second-to-last elements in $\mathfrak{Q}u$ and $\pi$ respectively, and let $\pi' : v_0 \tp v'$ be obtained from $\pi$ by deleting the last element $v$. Then $u' \lexbfte v'$ (as in the proof of the first claim) and $|\mathfrak{Q}u'| = |\pi'|$, so $\mathfrak{Q}u' \preceq \pi'$ by minimality of $u$.

How can it be the case that $\mathfrak{Q}u' \preceq \pi'$ but $(\pi',v) \prec (\mathfrak{Q}u',u)$? It must be the case that $\mathfrak{Q}u' = \pi'$ and $v \lexbft u$, contradicting $u \lexbfte v$.
\end{proof}

\begin{proof}[Proof of Corollary \ref{cor:bf canonical paths are minimal}]
(1) Take $u = v$ and $\pi = \min^s(v_0 \tp v)$ in the statement of the lemma.
(2) In the forward direction, take $\pi = \mathfrak{Q}v$ in the statement of the lemma. In the backwards direction, it suffices to show that $u <_B v \implies \mathfrak{Q}u \prec^s \mathfrak{Q}v$. But, this follows from the forwards direction.
\end{proof}

\begin{proof}[Proof of Lemma \ref{lem:canonical paths are minimal}]
When $v = v_0$, both $\mathfrak{P}v$ and $\min(v_0 \tp v)$ are simply the trivial path $(v_0)$. Therefore, we may assume $v \neq v_0$.

Assume by contradiction that $v$ is the $\lexdft$-least vertex such that $\mathfrak{P}v \neq \min(v_0 \tp v)$. Since these are unequal, co-initial, co-final proper paths, neither is a prefix of the other. Let $w$ be the final vertex on the greatest common prefix of $\mathfrak{P}v$ and $\min(v_0 \tp v)$, let $w \to u_1$ be the next edge of $\mathfrak{P}v$, and let $w \to u_2$ be the next edge of $\min(v_0 \tp v)$. Then $w \to u_2 \edgel w \to u_1$, by minimality of $\min(v_0 \tp v)$.

Since $w = \dfp(u_1)$, it must be the case that $w \lexdft u_1$, and hence $w \lexdft v$. We claim that $w \lexdft u_2$ as well. For if $u_2 \lexdft w$, then $u_2 \lexdft v$; hence $\min(v_0 \tp u_2) = \mathfrak{P}u_2$, and by monotonicity, $\min(v_0 \tp u_2)$ could not contain $w$, which it must. (Notice that $u_2 \neq w$ since $\min(v_0 \tp v)$ is a proper path.)

Consider the stage of depth-first search when $w$ is added to $L$. At this stage, new occurrences of both $u_1$ and $u_2$ are added to $S$, the latter being closer to the top. Since $w  = \dfp(u_1)$, no subsequent occurrence of $u_1$ is pushed onto to $S$ before $u_1$ appears on top of $S$. Therefore, $u_2$ appears on top of $S$ prior to $u_1$, which means $w \lexdft u_2 \lexdft u_1$. In particular, since $u_1 \le v$, $u_2 \neq v$.

We finish the proof by a case analysis. 

\begin{itemize}
    \item Suppose that for each vertex $x \in \min(v_0 \tp v)$, $x \le u_1$. Then, since $u_1 \le v$ and $v \in \min(v_0 \tp v)$, it must be the case that $u_1 = v$. Let $z$ be the penultimate vertex in $\min(v_0 \tp v)$. Then there is an edge $z \to v$, and by the case assumption, $z \lexdft v$. Since $w = \dfp(v)$, $z \le w$, and by minimality of $v$, $ \mathfrak{P}z = \min(v_0 \tp z)$. In particular, the latter path is monotone in $\lexdft$; therefore, all of its elements are $\le z$. However $u_2 \in \min(v_0 \tp z)$ and $z \le w \lexdft u_2$, contradiction.
    
    \item Suppose that there exists a vertex $x \in \min(v_0 \tp v)$ such that $x > u_1$. Let $x$ be the first such vertex in $\min(v_0 \tp v)$. Notice that in particular, $x$ must come after $u_2$, since $u_2 \lexdft v \implies \min(v_0 \tp u_2) = \mathfrak{P}u_2$ is monotone in $\lexdft$, and hence cannot contain $x$.
    
    Let $y$ be the vertex immediately preceding $x$ in $\min(v_0 \tp v)$, so that $y \le u_1$. In fact, since $\min(v_0 \tp y)$ contains $u_2$ and is monotone in $\lexdft$ (being identical to $\mathfrak{P}y)$), it must be the cause that $u_2 \le y \le u_1$, in particular, $w \lexdft y$.
    
    If $y \neq u_1$, then $y \lexdft u_1 \lexdft x$, $y \to x$, but $\dfp(u_1) \lexdft y$, contradicting the depth-first traversal property of Lemma \ref{lem:FO characterization of DFT}.
    
    If $y = u_1$, then let $z$ be the vertex immediately preceding $y$ in $\min(v_0 \tp v)$. For the same reason as $y$, $u_2 \le z$ ($\min(v_0 \tp z)$ must contain $u_2$ and is monotone in $\lexdft$). Hence $w \lexdft z \lexdft y = u_1$, but $z \to u_1$, contradicting $w = \dfp(u_1)$
\end{itemize}

This concludes the proof.
\end{proof}

\begin{lemma}\label{lem:prec_D has the DFT property}
For any $u \prec_D v \prec_D w$, if $u \to w$, then there exists some $u \preceq_D v' \prec_D v$ such that $v' \to v$.
\end{lemma}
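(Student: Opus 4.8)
The plan is to take as the witness the vertex $v'$ lying immediately before $v$ on the minimal path $\min(v_0 \tp v)$. Throughout, abbreviate $m(x) := \min(v_0 \tp x)$, so that $x \prec_D y$ iff $m(x) \prec m(y)$; the hypotheses force $u,v,w$ to be pairwise distinct. The first thing I would establish is a ``shortcut'' path to $w$: since $u \prec_D w$, the vertex $w$ cannot occur on $m(u)$, for otherwise the prefix of $m(u)$ ending at $w$ would be a proper path $v_0 \tp w$ that is a strict prefix of $m(u)$, hence $\prec m(u)$, forcing $m(w) \prec m(u)$ and contradicting $u \prec_D w$. Consequently $\pi := m(u)\,(u \to w)$ is itself a proper path $v_0 \tp w$, so by minimality $m(w) \preceq \pi$; together with $v \prec_D w$ this gives the inequality $m(v) \prec m(w) \preceq \pi$ that will eventually close the argument.

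Next I would show that $v'$, the penultimate vertex of $m(v)$, satisfies $m(v) = m(v')\,(v' \to v)$. Writing $m(v) = \zeta\,(v' \to v)$ with $\zeta$ the prefix of $m(v)$ up to $v'$ (a proper path $v_0 \tp v'$), we have $m(v') \preceq \zeta$ by minimality. If this were strict, then $m(v')$ and $\zeta$ would be distinct co-final paths, hence neither a prefix of the other, and the same kind of argument as in the previous paragraph shows $v \notin m(v')$, making $m(v')\,(v' \to v)$ a proper path $v_0 \tp v$; but then property~(\ref{rem:right mult}) of the list after Definition~\ref{def:path_order} gives $m(v')\,(v' \to v) \prec \zeta\,(v' \to v) = m(v)$, contradicting minimality of $m(v)$. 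Hence $m(v) = m(v')\,(v' \to v)$; in particular $v' \to v$ and $m(v') \sqsubset m(v)$, so $v' \prec_D v$. It therefore remains only to verify $u \preceq_D v'$.

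For this last inequality I would argue by contradiction, assuming $m(v') \prec m(u)$. Comparing $m(v')$ and $m(u)$ via Definition~\ref{def:path_order}, they cannot first disagree at a proper common prefix $\xi \sqsubset m(v')$: the vertex following $\xi$ in $m(v)$ coincides with the one in $m(v')$, so such a $\xi$ would also be the longest common prefix of $m(u)$ and $m(v) = m(v')\,(v' \to v)$, and the $\edgel$-comparison at the end of $\xi$ forced by $m(v') \prec m(u)$ is exactly the one making $m(v) \prec m(u)$, i.e., $v \prec_D u$, against $u \prec_D v$. So $m(v') \sqsubset m(u)$; write $m(u) = m(v')\,\rho$ with $\rho$ nonempty (well-defined since $m(u)$ is proper) and let $v' \to r$ be its first edge. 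Since $m(u) \prec m(v) = m(v')\,(v' \to v)$ and the two share the prefix $m(v')$, we must have $r \neq v$ (otherwise $m(v) \sqsubseteq m(u)$ and $v \prec_D u$) and $v' \to r \edgel v' \to v$. But then $\pi = m(u)\,(u \to w) = m(v')\,\rho\,(u \to w)$ likewise continues the prefix $m(v')$ with the edge $v' \to r$, whereas $m(v)$ continues it with $v' \to v$; as $v' \to r \edgel v' \to v$ and $r \neq v$, this forces $\pi \prec m(v)$, contradicting $m(v) \prec m(w) \preceq \pi$ from the first paragraph. Hence $m(u) \preceq m(v')$, so $u \preceq_D v' \prec_D v$ and $v' \to v$, as required.

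I expect the main obstacle to be the ``prefix property'' used in the second paragraph — that an initial segment of a minimal path is again a minimal path. The only genuine subtlety is that the lexicographic path order $\prec$, and the cancellation and multiplication laws it obeys, are defined only for \emph{proper} paths, so each time the argument cuts a minimal path and re-pastes a different minimal prefix one must separately rule out that a repeated vertex is created; those checks, together with the routine bookkeeping of longest common prefixes and of the single neighborhood-edge on which two minimal paths first differ, are the only computations in the proof and all follow directly from the properties of $\prec$ recorded after Definition~\ref{def:path_order}.
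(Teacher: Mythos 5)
Your proof is correct and follows essentially the same route as the paper's: the witness is the penultimate vertex $v'$ of $\min(v_0 \tp v)$, and the contradiction comes from comparing $\min(v_0 \tp v)$ against $\min(v_0 \tp u)$ extended by the edge $u \to w$, using the same properties of $\prec$ recorded after Definition~\ref{def:path_order}. The only organizational difference is that the paper directly shows $\min(v_0 \tp u) \sqsubseteq \min(v_0 \tp v)$, making $u \preceq_D v'$ immediate, whereas you obtain $u \preceq_D v'$ by contradiction via a case analysis, and you also spell out the prefix-closure of minimal paths, which the paper asserts without proof.
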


\begin{proof}
Let $\mathbf{u}$, $\mathbf{v}$, and $\mathbf{v}$ refer to $\min(v_0 \tp u)$, $\min(v_0 \tp v)$, and $\min(v_0 \tp w)$ respectively. By assumption $\mathbf{u} \prec \mathbf{v} \prec \mathbf{w}$, and since $u \to w$, $\mathbf{w} \preceq (\mathbf{u},w)$. In particular, $\mathbf{u} \prec \mathbf{v} \prec (\mathbf{u},w)$.

We claim that $\mathbf{u} \sqsubseteq \mathbf{v}$, i.e., that $\mathbf{u}$ is a prefix of $\mathbf{v}$. Otherwise, if the longest common prefix of the two paths were a proper prefix of each, then the same holds of $(\mathbf{u},w)$ and $\mathbf{v}$, and moreover, $\mathbf{u} \prec \mathbf{v} \implies (\mathbf{u},w) \prec \mathbf{v}$, a contradiction.

Therefore, $\mathbf{u} \sqsubseteq \mathbf{v}$; indeed, $\mathbf{u} \sqsubset \mathbf{v}$ as they are not equal. Let $v'$ be the penultimate vertex in $\mathbf{v}$; then $v' \to v$, and $\mathbf{v} = (\min(v_0 \tp v'),v)$, since minimal paths are closed under taking prefixes. In particular, $\mathbf{u} \sqsubseteq \min(v_0 \tp v')$, hence $u \preceq_D v'$, which is what we needed to show. 
\end{proof}

\begin{proof}[Proof of Theorem \ref{thm:alternate-formulation}]
Recall the definition of \emph{stage $i$} (Definition \ref{def:stage i}), for $0 \le i < n$, the vertex $v_i$, and the values $L_i$ and $S_i$ of $L$ and $S$.
In this notation, $i < j \iff v_i \lexdft v_j$, so we can restate our theorem as
$ v_i \prec_D v_j \iff i < j,$
for every $(i,j) \in n^2$.

Let $\vec{u} = (u_0,\dots,u_{n-1})$ be the vertices of $G$ ordered according to 
$\cdot \prec_D \cdot$, and suppose by contradiction that $\vec{u} \neq \vec{v}$, 
where $\vec{v} = (v_0,\dots,v_{n-1})$. Let $\hat{\imath}$ be the least index $i$ such
that $u_i \neq v_i$. Since $v_0$ is the $\prec_D$-least vertex of $G$, $u_0 = v_0$
and $\hat{\imath} > 0$. By Lemma \ref{lem:FO characterization of DFT}, $u_i$ and 
$v_i$ have preceding neighbors in $\vec{u}$ and $\vec{v}$. Let $\hat{\jmath}$ and 
$\hat{\jmath}'$ be the \emph{greatest} indices less than $i$ such that there exist 
edges $u_{\hat{\jmath}} \to u_{\hat{\imath}}$ and $v_{\hat{\jmath}'} \to 
v_{\hat{\imath}}$ respectively.
By minimality of $\hat{\imath}$, $u_{\hat{\jmath}} = v_{\hat{\jmath}}$ and 
$u_{\hat{\jmath}'} = v_{\hat{\jmath}'}$. Moreover, there exist $\hat{k}$, $\hat{k}' >
i$ such that $v_i = u_{\hat{k}}$ and $u_i = v_{\hat{k}'}$. Therefore, 
$v_{\hat{\jmath}} \to v_{\hat{k}'}$ and $u_{\hat{\jmath}'} \to u_{\hat{k}}$.

Suppose that $\hat{\jmath} < \hat{\jmath}'$. Then we have $\hat{\jmath} < \hat{\jmath}' < \hat{\imath} < \hat{k}$, but 
$ u_{\hat{\jmath}} \to u_{\hat{\imath}}$ and $ u_{\hat{\jmath}'} \to u_{\hat{k}}  $, contradicting the DFT property of $\prec_D$ (Lemma \ref{lem:prec_D has the DFT property}). Similarly if $\hat{\jmath}' < \hat{\jmath}$, then $\hat{\jmath}' < \hat{\jmath} < \hat{\imath} < \hat{k}'$, but 
$ v_{\hat{\jmath}'} \to v_{\hat{\imath}}$ and $ v_{\hat{\jmath}} \to u_{\hat{k}'}  $, contradicting the DFT property of $\lexdft$ (Lemma \ref{lem:FO characterization of DFT}). Therefore $\hat{\jmath} = \hat{\jmath}'$, and there is an edge from $v_{\hat{\jmath}} = u_{\hat{\jmath}'}$ to both $v_{\hat{\imath}}$ and $v_{\hat{k}'} = u_{\hat{\imath}}$.

By definition, $v_{\hat{\jmath}}$ is the top of the stack $S_{\hat{\jmath}}$. Therefore, $S_{\hat{\jmath}+1}$ is obtained from $S_{\hat{\jmath}}$ by popping $v_{\hat{\jmath}}$ off and pushing all of its neighbors on, in reverse neighborhood order, which are not contained in $L_{\hat{\jmath}+1}$; this includes both $v_{\hat{\imath}}$ and $u_{\hat{\imath}}$. We claim that this is the \emph{final} occurrence of $v_{\hat{\imath}}$ that is pushed onto $S$.

Otherwise, if there were some $\ell > \hat{\jmath}$ such that $S_{\ell + 1}$ contains
a new occurrence of $v_{\hat{\imath}}$, then it must be the case that $\ell < 
\hat{\imath}$, for otherwise $v_{\hat{\imath}} \in L_{\ell+1}$, which would prevent 
it from being pushed onto $S_{\ell+1}$. But it also must be the case that there is an
edge $v_{\ell} \to v_{\hat{\imath}}$, contradicting the maximality of 
$v_{\hat{\jmath}}$ among prior neighbors of $v_{\hat{\imath}}$. 

Since this is the final occurrence of $v_{\hat{\imath}}$ that is pushed on $S$, and since $v_{\hat{\imath}}$ is popped off of $S$ \emph{before} $u_{\hat{\imath}}$, we can infer that (i) $u_{\hat{\imath}}$ is pushed onto $S_{\hat{\jmath}+1}$ before $v_{\hat{\imath}}$, and (ii) no subsequent occurrence of $u_{\hat{\imath}}$ is pushed onto $S$. This means that (i) $v_{\hat{\jmath}} \to v_{\hat{\imath}} \edgel v_{\hat{\jmath}} \to u_{\hat{\imath}}$, and (ii) $\dfp(u_{\hat{\imath}})= v_{\hat{\jmath}}$.

Since $\dfp(v_{\hat{\imath}}) = v_{\hat{\jmath}}$ as well, we can infer that $\mathfrak{P}u_{\hat{\imath}} = \mathfrak{P}v_{\hat{\jmath}} \to u_{\hat{\imath}}$ and $\mathfrak{P}v_{\hat{\imath}} = \mathfrak{P}v_{\hat{\jmath}} \to v_{\hat{\imath}}$. By Lemma \ref{lem:canonical paths are minimal}, $\min(v_0 \tp u_{\hat{\imath}}) = \min(v_0 \tp v_{\hat{\jmath}}) \to u_{\hat{\imath}}$ and $\min(v_0 \tp v_{\hat{\imath}}) = \min(v_0 \tp v_{\hat{\jmath}}) \to v_{\hat{\imath}}$.

Since $u_{\hat{\imath}} \prec_D v_{\hat{\imath}}$, $\min(v_0 \tp u_{\hat{\imath}}) \prec \min(v_0 \tp v_{\hat{\imath}})$, which means in particular that $v_{\hat{\jmath}} \to u_{\hat{\imath}} \edgel v_{\hat{\jmath}} \to v_{\hat{\imath}}$. But this contradicts assertion (i) above.

\end{proof}

\section*{Appendix: proofs from sections \ref{sec:graphs} through \ref{sec:putting}}

\begin{proof}[Proof of Lemma \ref{lem:EO homs preserve lex order}]
 It suffices to show that $h$ preserves $\cdot \prec \cdot$; the second statement follows from the observation that graph homomorphisms preserve path length, i.e., $|\pi| = |h(\pi)|$ for every path $\pi$.
 
 Suppose that $\pi$ and $\sigma$ are co-initial paths in $G$ such that $\pi \prec \sigma$. If $\pi \sqsubset \sigma$, then $h(\pi) \sqsubset h(\sigma)$ and we're done. Otherwise, let $u \to v_1$ and $u \to v_2$ be the first edges in $\pi$ and $\sigma$ respectively following their longest common prefix $\zeta$. Since $\pi \prec \sigma$, $u \to v_1 \edgel u \to v_2$, so $h(u) \to h(v_1) \edgel h(u) \to h(v_2)$.
 
 Notice that $h(\zeta)$ is a common prefix of $h(\pi)$ and $h(\sigma)$, and since $h(u) \to h(v_1) \edgel h(u) \to h(v_2)$, it must be the longest one. Moreover, since $h(u) \to h(v_1) \edgel h(u) \to h(v_2)$, $h(\pi) \prec h(\sigma)$, which is what we wanted to show.
\end{proof}

\begin{lemma}\label{lem:theta well defined}
The functor $\T$ is well-defined.
\end{lemma}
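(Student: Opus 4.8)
The plan is to check the two halves of well-definedness of the functor $\T$ (Definition~\ref{def:functor T}) separately: that $\T$ sends each object of $\mathbf{FinGraph}^l_\star$ to a finite edge-ordered arborescence, and that it sends each lex-homomorphism to a morphism of $\mathbf{FinArb}^<_\star$ (i.e.\ a pointed, edge-ordered graph homomorphism). Functoriality — preservation of identities and composites — will then be immediate, since $\T(h)$ acts on vertices exactly as $h$ does, and morphisms in both categories are determined by their underlying vertex maps. Throughout I would lean on the fact that \emph{minimal paths are closed under taking prefixes} (already invoked in the proof of Lemma~\ref{lem:prec_D has the DFT property}): if $\pi=\min(v_0\tp v)$ and $u$ lies on $\pi$, then the prefix of $\pi$ ending at $u$ equals $\min(v_0\tp u)$.

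For the object part, the key observation to record is that an edge $u\to v$ belongs to $\T(G)$ iff it occurs on the \emph{proper} path $\min(v_0\tp v)$ ending at $v$, and therefore must be its \emph{last} edge; so $u$ is the penultimate vertex of $\min(v_0\tp v)$ and is \emph{uniquely determined by $v$}. Thus in $\T(G)$ every vertex $v\neq v_0$ has exactly one incoming edge, with some source $p(v)$, while $v_0$ has none (by Remark~\ref{rem:path_order props}(\ref{rem:empty path least}), $\min(v_0\tp v_0)$ is the empty path). By prefix-closure, $\min(v_0\tp p(v))$ is the prefix of $\min(v_0\tp v)$ ending at $p(v)$, hence is one shorter, so iterating $p$ strictly shortens the associated minimal path and reaches $v_0$ after finitely many steps. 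Reversing this $p$-chain exhibits a walk $v_0\tp v$ all of whose edges lie in $\T(G)$ (each edge $p^{k+1}(v)\to p^{k}(v)$ being the last edge of $\min(v_0\tp p^{k}(v))$), and this is the \emph{unique} walk $v_0\tp v$ in $\T(G)$, since any such walk, read backwards, is forced to follow the unique incoming edges and so coincides with the $p$-chain. Finiteness is clear, and the neighbourhood orders of $\T(G)$, being restrictions of linear orders, remain linear; hence $\T(G)\in\mathbf{FinArb}^<_\star$.

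For the morphism part, let $h:G\to H$ be a lex-homomorphism with distinguished points $v_0$ and $w_0$. Instantiating clause~(\ref{def:EO_hom:min}) of Definition~\ref{def:EO_hom} at the source and using clause~(\ref{def:EO_hom:point}) gives $h(\min(v_0\tp v))=\min(w_0\tp h(v))$ for every $v$; in particular $h$ carries $\min(v_0\tp v)$ to a \emph{proper} path. So if $u\to v$ is an edge of $\T(G)$ — i.e.\ the last edge of $\min(v_0\tp v)$ — then $h(u)\to h(v)$ is the last edge of $\min(w_0\tp h(v))$ and hence an edge of $\T(H)$, which is clause~(\ref{def:EO_hom:edge}) for $\T(h)$. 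Clause~(\ref{def:EO_hom:point}) is inherited verbatim from $h$, since $\T(h)$ has the same vertex map and $\T(G),\T(H)$ have the same distinguished points as $G,H$. For clause~(\ref{def:EO_hom:monotone}): if $u\to v_1\edgel u\to v_2$ in $\T(G)$ then the same holds in $G$ by definition of the order on $\T(G)$, so $h(u)\to h(v_1)\edgel h(u)\to h(v_2)$ in $H$ by monotonicity of $h$; since both of these edges lie in $\T(H)$ by the previous step, and $\T(H)$ carries the restricted order, the inequality holds in $\T(H)$ too. This makes $\T(h)$ a morphism of $\mathbf{FinArb}^<_\star$, and functoriality then follows as noted.

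I expect the genuine content — and the main obstacle — to be the prefix-closure lemma, on which both connectivity and termination above rest. To prove it I would argue by contradiction: suppose $\pi=\pi'\tau=\min(v_0\tp v)$ with $\pi':v_0\tp u$, and some proper $\rho:v_0\tp u$ satisfies $\rho\prec\pi'$; since $\rho\neq\pi'$ and both are proper paths ending at $u$, neither is a prefix of the other. Writing $\tau=(u=w_0,w_1,\dots,w_m=v)$, let $q$ be the largest index with $w_q$ a vertex of $\rho$, and splice the prefix of $\rho$ up to $w_q$ onto $(w_q,\dots,w_m)$; maximality of $q$ makes the result a \emph{proper} path $\eta:v_0\tp v$. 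One then checks — using that $\pi$ is proper to rule out $w_q$ lying inside the longest common prefix of $\rho$ and $\pi'$ — that $\eta$ and $\pi$ first disagree precisely where $\rho$ and $\pi'$ first disagree, and on $\eta$'s side the differing edge is $\edgel$-smaller; hence $\eta\prec\pi$, contradicting minimality of $\pi$. Remark~\ref{rem:path_order props}(\ref{rem:right mult}) packages the relevant ``replace a prefix'' manipulation.
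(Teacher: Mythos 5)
Your proof is correct and follows essentially the same route as the paper's: show $\T(G)$ is an arborescence via connectivity (every edge of a minimal path lies in $\T(G)$, resting on prefix-closure of minimal paths) together with uniqueness of incoming edges, then check that $\T(h)$ lands in $\T(H)$ using preservation of minimal paths, with point-preservation and monotonicity inherited directly from $h$. The only difference is that you additionally prove the prefix-closure fact (via a correct splice argument) and spell out the uniqueness-of-paths step, both of which the paper uses or leaves implicit without proof.
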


\begin{proof}
We first have to check that $\T(G)$ is a finite, edge-ordered arborescence. It is trivially finite and edge-oriented. Notice that $\T(G)$ is connected: for every vertex $u$, every edge on the path $\min(v_0 \tp u)$ is contained in $\T(G)$. On the other hand, the in-degree of each vertex is at most 1: there cannot be two distinct edges $u \to v$ and $u' \to v$ in $\min(v_0 \tp v)$.

Next we have to check that for any homomorphism $h$, $\T(h)$ is in fact a homomorphism of pointed, edge-ordered graphs. First notice that $\T(h)$ actually maps into $\T(H)$: if $u \to v$ is included in $\T(G)$, then it must be contained in $\min(v_0 \tp v)$ in $G$, whose $h$-image is $\min(h(v_0) \tp h(v))$ by property \eqref{def:EO_hom:min} of Definition \ref{def:EO_hom}; therefore, $h(u \to v)$ is contained in a least path and so included in $\T(H)$.

Moreover, $\T(h)$ clearly fixes the distinguished vertex, and visibly inherits monotonicity on co-initial edges from $h$. Hence, $\T(h)$ is a pointed, edge-ordered graph homomorphism.
\end{proof}

\begin{proof}[Proof of Theorem \ref{thm:adj graphs with arbs (lex)}]
First, given $\I(T) \tot{h^\abv} G$ in $\mathbf{FinGraph}_\star^l$, we describe how to obtain $T \tot{h_\blw} T(G)$ in $\mathbf{FinArb}_\star^<$. Define $h_\blw(v) = h^\abv(v)$, for $v \in T$ (which, remember is the same as $I(T)$).
To check that $h_\blw$ is well-defined, we must show that if $u \to v$ is an edge of $T$, then $h^\abv(u \to v)$ is contained in $\T(G)$. But since $T$ is an arborescence, $u \to v$ is trivially contained in $\min(v_0 \tp v)$, so $h^\abv(u \to v)$ is contained in $\min(h(v_0) \tp h(v))$, and hence included in $\T(G)$.

Moreover, the fact that $h_\blw$ maps edges to edges, preserves the distinguished point, and is monotone on co-initial edges, is immediately inherited from $h^\abv$.

Next, given $T \tot{h^\blw} \T(G)$ in $\mathbf{FinArb}^<_\star$, define $I(T) \tot{h^\abv} G$ in $\mathbf{FinGraph}_\star^l$ by $h^\abv(v) = h_\blw(v)$, for $v \in \I(T)$.
In this case, we do not need to check that $h^\abv$ is well-defined, and the properties of mapping edges to edges, preserving the distinguished point, and monotonicity on neighborhoods, are inherited immediately from $h_\blw$. The fact that $h^\abv$ maps minimal paths to minimal paths is easily justified by observing that $h^\abv$ maps into $\T(G)$, the tree of minimal paths in $G$.

To check that this correspondence is bijective, notice that starting from either $h^\abv$ or $h_\blw$, passing to the other one, and passing back again, gives us the same morphism we started with. Naturality follows straightforwardly by observing that both functors are the identity on morphisms, so naturality squares trivially commute.
\end{proof}

\begin{lemma}\label{lem:adjunction for free}
There is an adjunction
\[\begin{tikzcd}
	{\mathbf{LexGraph}} && {\mathbf{FinArb}^<_\star}
	\arrow["{\T'}"{name=0}, from=1-1, to=1-3, curve={height=-18pt}]
	\arrow["{\I'}"{name=1}, from=1-3, to=1-1, curve={height=-18pt}]
	\arrow["\dashv"{rotate=90}, from=1, to=0, phantom]
\end{tikzcd}\]
\end{lemma}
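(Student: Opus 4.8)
The plan is to deduce $\I' \dashv \T'$ formally from the adjunction $\I \dashv \T$ of Theorem~\ref{thm:adj graphs with arbs (lex)}, using that $\mathbf{LexGraph}$ sits inside $\mathbf{FinGraph}^l_\star$ as a full subcategory (Figure~\ref{fig:category zoo}) and that $\I$ corestricts to it. First I would record the two facts that make the statement even make sense. On one hand, every finite edge-ordered arborescence is already a lex-graph --- the unique path from $u$ to a neighbour $v \in N(u)$ is the edge $u \to v$ itself, so the neighbourhood order trivially agrees with the minimal-path order --- and any pointed edge-ordered graph homomorphism between arborescences is a lex-homomorphism; hence $\I$ factors through the inclusion $\mathbf{LexGraph} \hookrightarrow \mathbf{FinGraph}^l_\star$, which is precisely the definition of $\I' : \mathbf{FinArb}^<_\star \to \mathbf{LexGraph}$. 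On the other hand $\T$ already lands in $\mathbf{FinArb}^<_\star$, so its restriction to the subcategory $\mathbf{LexGraph}$ is a bona fide functor $\T' : \mathbf{LexGraph} \to \mathbf{FinArb}^<_\star$, with $\T' G = \T G$ on objects and $\I' T = \I T$ on objects.

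Next I would exhibit the hom-set bijection. Fix $T \in \mathbf{FinArb}^<_\star$ and $G \in \mathbf{LexGraph}$. Because $\mathbf{LexGraph}$ is full in $\mathbf{FinGraph}^l_\star$ and both $\I' T = \I T$ and $G$ are lex-graphs, the hom-sets agree on the nose: $\mathbf{LexGraph}(\I' T, G) = \mathbf{FinGraph}^l_\star(\I T, G)$. Theorem~\ref{thm:adj graphs with arbs (lex)} then gives a bijection of the latter with $\mathbf{FinArb}^<_\star(T, \T G) = \mathbf{FinArb}^<_\star(T, \T' G)$, and composing yields the desired $\mathbf{LexGraph}(\I' T, G) \cong \mathbf{FinArb}^<_\star(T, \T' G)$. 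For naturality, one checks the two families of squares --- one in the $\mathbf{FinArb}^<_\star$-variable, one in the $\mathbf{LexGraph}$-variable --- but each such square is just the corresponding naturality square for $\I \dashv \T$ with all arrows reinterpreted along the full inclusion, which changes neither objects nor morphisms nor composites, so commutativity is inherited; this is exactly as in the proof of Theorem~\ref{thm:adj graphs with arbs (lex)}, where both functors act as the identity on underlying morphisms.

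There is essentially no hard content here: the argument is the standard observation that a left adjoint corestricts along any full subcategory inclusion of its codomain that contains its image, and dually the right adjoint restricts accordingly. If anything counts as the point to be careful about, it is the appeal to fullness of $\mathbf{LexGraph}$ in $\mathbf{FinGraph}^l_\star$ --- i.e.\ that an edge-ordered graph homomorphism whose source and target are both lex-graphs automatically preserves minimal paths --- which should either be cited from the earlier discussion or verified directly (e.g.\ via Lemma~\ref{lem:EO homs preserve lex order}); with that in hand the lemma is immediate.
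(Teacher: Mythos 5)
Your proposal is correct and follows essentially the same route as the paper: there, too, the adjunction is transported along the full inclusion $J\colon \mathbf{LexGraph}\hookrightarrow\mathbf{FinGraph}^l_\star$, using that $\I$ factors as $J\circ \I'$ and setting $\T'=\T\circ J$, with the hom-set bijection of Theorem~\ref{thm:adj graphs with arbs (lex)} and naturality inherited because all the functors involved are the identity on underlying morphisms. One small correction to your closing remark: fullness of $\mathbf{LexGraph}$ in $\mathbf{FinGraph}^l_\star$ is not the claim that every edge-ordered graph homomorphism between lex-graphs preserves minimal paths (which is false in general); it holds simply because the morphisms of $\mathbf{FinGraph}^l_\star$ are by definition already lex-homomorphisms, so the two hom-sets coincide on the nose, exactly as your main argument uses.
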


\begin{proof}
The adjunction from Theorem~\ref{thm:adj graphs with arbs (lex)} factors as
\[\begin{tikzcd}
	{\mathbf{LexGraph}} & {\mathbf{FinGraph}^l_\star} \\
	& {\mathbf{FinArb}^<_\star}
	\arrow["{\Theta}"{name=0}, from=1-2, to=2-2, curve={height=-12pt}]
	\arrow["{I}"{name=1}, from=2-2, to=1-2, curve={height=-12pt}]
	\arrow["{J}", from=1-1, to=1-2, tail]
	\arrow["{I'}", from=2-2, to=1-1, curve={height=-12pt}]
	\arrow["\dashv"{rotate=0}, from=1, to=0, phantom]
\end{tikzcd}\]
where $J$ is the fully faithful identity-on-objects functor witnessing the inclusion of $\mathbf{LexGraph}$ in $\mathbf{FinGraph}^l_\star$, and $\Theta' = \Theta \circ J$. The natural isomorphism 
\begin{align*}
\mathbf{LexGraph}(I'(G),H) & \cong \mathbf{FinGraph}_\star^l(J(I'(G)),J(H)) \\
& = \mathbf{FinGraph}_\star^l(I(G), J(H)) \\
& \cong \mathbf{FinArb}_\star^<(G, \Theta(J(H))) \\
& = \mathbf{FinArb}_\star^<(G, \Theta'(H))
\end{align*}
establishes this adjunction.
\end{proof}

\begin{lemma}\label{lem:S well defined}
For any finite, connected, pointed, edge-ordered graph $G$, $S(G)$ is an arborescence.
\end{lemma}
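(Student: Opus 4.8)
The plan is to check directly the three things required of an arborescence with root $v_0$: that $S(G)$ is a finite graph on the same vertex set as $G$ (immediate from the definition), that it is connected, and — the substantive point — that for every vertex $u$ there is a \emph{unique} directed path $v_0 \tp u$ in $S(G)$. I would not try to exhibit the path explicitly; instead I would pin down the in-degrees of $S(G)$ and then invoke a counting/tree argument.

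First I would establish connectivity. For any vertex $u$, since $G$ is connected the path $\min^s(v_0 \tp u)$ exists, and by the definition of $S$ every edge on $\min^s(v_0 \tp u)$ is an edge of $S(G)$; hence $\min^s(v_0 \tp u)$ itself is a directed path $v_0 \tp u$ lying inside $S(G)$. Next I would compute in-degrees. An edge $u \to v_0$ of $S(G)$ would have to lie on $\min^s(v_0 \tp v_0) = \varepsilon = (v_0)$, which carries no edges, so $v_0$ has in-degree $0$. For $v \neq v_0$, the path $\pi := \min^s(v_0 \tp v)$ is a shortest path, hence proper, so $v$ occurs on $\pi$ exactly once, at its target; thus $\pi$ has exactly one edge entering $v$, namely its last edge. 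Since by definition the edges of $S(G)$ entering $v$ are precisely the edges of $\pi$ entering $v$, every $v \neq v_0$ has in-degree exactly $1$ in $S(G)$.

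Finally I would conclude. Summing in-degrees, $S(G)$ has exactly $n-1$ edges where $n = |V(G)|$. Together with connectivity, the underlying undirected graph of $S(G)$ is a connected graph on $n$ vertices with at most $n-1$ edges, hence a tree; in particular $S(G)$ has no directed cycles, and the undirected path between any two vertices is unique. Therefore any directed path $v_0 \tp u$ in $S(G)$ is proper and is forced to be the unique undirected $v_0$–$u$ path traversed in the (forced) direction away from $v_0$, and at least one such path exists by connectivity. Hence $S(G)$ is an arborescence.

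I expect the only delicate step to be the passage from the in-degree and connectivity data to the uniqueness-of-paths conclusion; the rest is direct unwinding of definitions, the one genuine use being that shortest paths are proper. An alternative route, avoiding the edge count, is to note that each $v \neq v_0$ has a unique in-neighbour $p(v)$, that the prefix of $\min^s(v_0 \tp v)$ ending at $p(v)$ is $\min^s(v_0 \tp p(v))$ (using the prefix-closure of $\prec^s$-least paths, which follows from Remark~\ref{rem:path_order props}), so $|\min^s(v_0 \tp p^i(v))|$ strictly decreases and the iteration $v, p(v), p^2(v), \dots$ must terminate at the unique in-degree-$0$ vertex $v_0$, giving both existence and uniqueness of the path $v_0 \tp v$.
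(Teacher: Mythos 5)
Your proposal is correct and follows essentially the same route as the paper's proof: connectivity because every edge of $\min^s(v_0 \tp u)$ survives in $S(G)$, and uniqueness of the path $v_0 \tp v$ because at most one edge entering $v$ can lie on $\min^s(v_0 \tp v)$. You merely spell out more explicitly (via the in-degree count and tree argument, or the predecessor iteration using prefix-closure of $\prec^s$-least paths) the final step that the paper treats as immediate.
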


\begin{proof}
To verify that $S(G)$ is a well-defined arborescence, it suffices to observe that $S(G)$ is connected (as it preserves least shortest paths), and that there is a unique path $v_0 \tp v$: two distinct edges $u \to v$ and $u' \to v$ cannot both occur in $\min^s(v_0 \tp v)$.

To check that $S(G \tot{h} H)$ is well defined, we have to verify for $u \to v \in S(G)$, $h(u) \to h(v) \in S(H)$. But such morphisms $h$ preserve least shortest paths by Definition \ref{def:EO_hom} (\ref{def:EO_hom:shortmin}).
\end{proof}

\begin{proof}[Proof of Theorem \ref{thm:adj graphs with arbs (short-lex)}]
The proof is obtained by literally copying the proof of Theorem \ref{thm:adj graphs with arbs (lex)} and replacing $\mathbf{FinGraph}^l_\star$ by $\mathbf{FinGraph}^s_\star$, $\T$ by $S$, and $\min$ by $\min^s$ throughout.

Given $\I(T) \tot{h^\abv} G$ in $\mathbf{FinGraph}_\star^s$, we describe how to obtain $T \tot{h_\blw} T(G)$ in $\mathbf{FinArb}_\star^<$. Define $h_\blw(v) = h^\abv(v)$, for $v \in T$ (which we identified with $\I(T)$).
To check that $h_\blw$ is well-defined, we must show that if $u \to v$ is an edge of $T$, then $h^\abv(u \to v)$ is contained in $S(G)$. But since $T$ is an arborescence, $u \to v$ is trivially contained in $\min^s(v_0 \tp v)$, so $h^\abv(u \to v)$ is contained in $\min^s(h(v_0) \tp h(v))$, and hence included in $S(G)$.

Moreover, the fact that $h_\blw$ maps edges to edges, preserves the distinguished point, and is monotone on co-initial edges, is immediately inherited from $h^\abv$.

Next, given $T \tot{h^\blw} S(G)$ in $\mathbf{FinArb}^<_\star$, define $I(T) \tot{h^\abv} G$ in $\mathbf{FinGraph}_\star^s$ by
$h^\abv(v) = h_\blw(v)$, for $v \in \I(T)$ (which we identified with $T$).
In this case, we do not need to check that $h^\abv$ is well-defined, and the properties of mapping edges to edges, preserving the distinguished point, and monotonicity on neighborhoods, are inherited immediately from $h_\blw$. The fact that $h^\abv$ maps minimal shortest paths to minimal shortest paths is easily justified by observing that $h^\abv$ maps into $S(G)$, the tree of minimal shortest paths in $G$.

To check that this correspondence is bijective, notice that starting from either $h^\abv$ or $h_\blw$, passing to the other one, and passing back again, gives us the same morphism we started with. Naturality follows straightforwardly by observing that both functors are the identity on morphisms, so naturality squares trivially commute.
\end{proof}

\begin{proof}[Proof of Lemma \ref{lem:min}]
  (i): If there are edges $u \to v_1$ and $u \to v_2$ of $G$, then $u \to v_1 \edgel u \to v_2$ in $G$ iff $\min(u \tp v_1) \prec \min(u \tp v_2)$ in $G$ (since $G$ is a lex-graph) iff $u \to v_1 \edgel u \to v_2$ in $\F(G)$ (by definition of $\F$).
  
  (ii): Suppose $\sigma$ and $\pi$ are co-initial paths from $G$. We may assume that $\sigma$ and $\pi$ share no nontrivial prefix; then either $\sigma$ or $\pi$ is empty (and the claim is trivial), or $\sigma$ and $\pi$ differ on their first edge, and the claim follows from (i).

  (iii): Work in $\F(G)$. It suffices to show that minimal paths in $\F(G)$ consist of only edges in $G$; by the above remark, if two paths consist of $G$-edges, it does not matter whether we compare them in $G$ or in $\F(G)$.
  
  Towards which, it suffices to show that every edge $u \to v$ not in $G$ is greater than the minimal path $\min_G(u \tp v)$ between $u$ and $v$ in $G$. Then any path with non-edges in $G$ can be lessened; in particular minimal paths in $\F(G)$ cannot contain any non-edges of $G$.
  
  Let $v_1 \neq v$ be the second vertex in $\min_G(u \tp v)$. Since minimal paths are closed under taking prefixes, $u \to v_1 = \min_G(u \tp v_1)$, in both $G$ and $\F(G)$. Since $\min_G(u \tp v_1) \prec \min_G(u \tp v)$, $u \to v_1 \edgel u \to v$ in $\F(G)$. But then $\min_G(u \tp v) \prec u \to v$, which is what we wanted to show.
\end{proof}

\begin{proof}[Proof of Lemma \ref{lem:free_functor}]
  Given any lex-graph $G$, $\F(G)$ is clearly a transitive edge-ordered graph, but we must check that it satisfies the lex-graph property (Definition \ref{def:lex_graph}-(\ref{def:lex_graph:order})). But for any $v_1$ and $v_2$ in the neighborhood of a common $u$ in $\F(G)$, $u \to v_1 \edgel u \to v_2$ in $\F(G)$ iff $\min(u \tp v_1)
  \prec \min(u \tp v_2)$ in $G$ (by definition of $\F$) iff $\min(u \tp v_1)
  \prec \min(u \tp v_2)$ in $\F(G)$ (by Lemma \ref{lem:min} plus the preceding remark).

  By definition, $\F$ immediately preserves identities and compositions of homomorphisms, and it remains to check that for any lex-homomorphism $G \tot{h} H$ of lex-graphs, $\F(h)$ is a lex-homomorphism $\F(G) \to \F(H)$. We check each of the conditions in Definition~\ref{def:EO_hom}:
  \begin{enumerate}[(i)]
      \item If $u \to v$ in $\F(G)$, then there is a path $u \tp v$ in $G$, so there is a path $h(u) \tp h(v)$ in $H$, and hence an edge $h(u) \to h(v)$ in $\F(H)$.
      
      \item Since $h$ preserves the distinguished point, so does $\F(h)$.
      
      \item If $u \to v_1 \edgel u \to v_2$ in $G$, $\min(u \tp v_1) \prec \min(u \tp v_2)$, as $G$ is a lex-graph. Since $h$ is a lex-homomorphism, $\min_H(h(u) \tp h(v_i)) = h(\min_G(u \tp v_i))$, so $\min(h(u) \tp h(v_1)) \prec \min(h(u) \tp h(v_2))$ in $H$. By definition of $\F$, $h(u) \to h(v_1) \edgel h(u) \to h(v_2)$ in $\F(H)$.
      
      \item As the minimal path $\min(u \tp v)$ in $\F(G)$ is also minimal in $G$, as shown above, and as $h$ preserves minimal paths, $h(\min(u \tp v))$ is $\min(h(u) \tp h(v))$ in $H$, and this in turn is also the minimal path in $\F(H)$.
  \end{enumerate}
\end{proof}

\begin{proof}[Proof of Theorem \ref{thm:lex/tlex adjunction}]
  Let $\F(G) \tot{h^\abv} H$ be a homomorphism of
  $\mathbf{TLexGraph}$. Since $\F(G)$ has the same vertices as $G$, we may
  define $G \tot{h_\blw} \U(H)$ by $h_\downarrow(v) = h^\uparrow(v)$
  (since $H$ and $\U(H)$ are exactly identical).
  
  We check that $h_\blw$ is a lex-homomorphism by checking Definition \ref{def:EO_hom} (\ref{def:EO_hom:edge})-(\ref{def:EO_hom:min}). We write, e.g., $h(v)$ to refer unambiguously to the vertex $h^\abv(v) = h_\blw(v)$.
  \begin{enumerate}[(i)]
      \item If $u \to v$ is an edge of $G$, it is an edge of $\F(G)$, so $h(u) \to h(v)$ is an edge of $H$, and hence an edge of $U(H)$.
      
      \item $h_\blw$ directly inherits preservation of the distinguished point from $h^\abv$
      
      \item If $u \to v_1 \edgel u \to v_2$ in $G$, then $u \to v_1 \edgel u \to v_2$ in $\F(G)$ (Lemma \ref{lem:min}), so $h(u) \to h(v_1) \edgel h(u) \to h(v_2)$ in $H$ (monotonicity of $h^\abv$), and hence the same holds in $U(H)$.
      
      \item If $\pi$ is the minimal path $u \tp v$ in $G$, then it's minimal in $\F(G)$ (Lemma \ref{lem:min}), so $h(\pi)$ is minimal in $H$ (since $h^\abv$ is a lex-homomorphism), and hence also in $U(H)$.
  \end{enumerate}
  
  In the other direction, we suppose that we are given some homomorphism of 
  lex-graphs $G \tot{h_\blw} \U(H)$ in $\mathbf{LexGraph}$. Define $\F(G) \tot{h^\abv} H$ by 
  $h^\abv(v) = h_\blw(v)$. Again, we check (\ref{def:EO_hom:edge})-(\ref{def:EO_hom:min}) of Definition \ref{def:EO_hom}.
  \begin{enumerate}[(i)]
      \item If $u \to v$ is an edge of $\F(G)$, then there is a path $u \tp v$ in $G$, and hence a path $h(u) \tp h(v)$ in $U(H)$, and (since $H$ is transitive), and edge $h(u) \to h(v)$.
      
      \item $h^\abv$ directly inherits preservation of the distinguished point from $h_\blw$
      
      \item If $u \to v_1 \edgel u \to v_2$ in $\F(G)$, then $\min(u \tp v_1) \prec \min(u \tp v_2)$ in $G$; hence (since $h_\blw$ is a lex-homomorphism) $\min(h(u) \tp h(v_1)) \prec \min(h(u) \tp h(v_2))$ in $U(H)$, hence in $H$. Since $H$ is a lex-graph, $h(u) \to h(v_1) \edgel h(u) \to h(v_2)$.
      
      \item If $\pi$ is the minimal path $u \tp v$ in $\F(G)$, then it's the minimal path in $G$ (Lemma \ref{lem:min}), so $h(\pi)$ is minimal in $U(H)$ (since $h_\blw$ is a lex-homomorphism), and hence in $H$. 
  \end{enumerate}
 
  We need only now to check that this correspondence is bijective and natural. Bijectivity follows readily from the fact that we always have $h^\abv = h_\blw$, so going from $G \tot{h_\blw} \U(H)$ to $\F(G) \tot{h^\abv} H$ and back has no 
  effect, and similarly in the other direction. Similarly, naturality follows straightforwardly by noting that $\U(h)(v) = \F(h)(v) = h(v)$ for all $h$ and $v$, so naturality squares trivially commute.
\end{proof}

\begin{proof}[Proof of Lemma \ref{lem:longest paths are tree edges}]
Notice that distances between vertices are never increased in $G$ compared to $T$, only decreased, meaning that if there is a path $u \tp v$ in $G$, there is a path $u \tp v$ in $T$ that is no shorter. Therefore, longest paths in $G$ must consist entirely of edges in $T$, and are therefore unique.

Conversely, if $u \to v$ is an edge of $T$, then it appears on the unique path $v_0 \tp v$ in $T$. As just observed, the longest path $v_0 \tp v$ in $G$ is also a path in $T$; hence, it is the unique path $v_0 \tp v$ in $T$, and thus contains $u \to v$.
\end{proof}

\begin{lemma}\label{lem:Gamma well defined}
$\G$ is a well-defined functor.
\end{lemma}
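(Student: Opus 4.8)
The plan is to establish the two halves of functoriality separately: that $\G$ sends objects to objects and arrows to arrows; preservation of identities and of composition is then automatic, since $\G(h)$ is literally $h$ on the (unchanged) vertex set. For objects, given a finite, edge-ordered arborescence $T$ with root $v_0$, I would verify that $\G(T)$ meets Definition \ref{def:cat TArb} using $T$ itself as the witnessing arborescence: the distinguished point is unchanged, the edge relation of $\G(T)$ is by construction the transitive closure of that of $T$, and the prescribed neighbourhood order on $\G(T)$ is exactly ``$v_0 \tp v_1 \prec^s v_0 \tp v_2$ in $T$'' as required. The point needing an argument is that this prescription is a \emph{strict linear} order on each neighbourhood $N_{\G(T)}(u)$: since $v_0$ is the root of the arborescence $T$, every vertex $v$ has a unique path $v_0 \tp v$ in $T$, this path is proper (a repeated vertex could be excised to yield a second path), and distinct vertices give paths with distinct targets; hence $v \mapsto (v_0 \tp v)$ injects $N_{\G(T)}(u)$ into the proper paths out of $v_0$, on which $\prec^s$ is a strict total order, so its pullback is one too. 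Finiteness and pointedness are immediate, and $\G(T)$ is connected because its edge relation contains that of the connected graph $T$. Hence $\G(T)\in\mathbf{TArb}$.

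For arrows, let $h : T \to T'$ be a morphism of $\mathbf{FinArb}^<_\star$, i.e.\ an edge-ordered-graph homomorphism of arborescences, and write $v_0'=h(v_0)$ for the (necessarily distinguished) image of the root. I would check that $\G(h)\colon v\mapsto h(v)$ satisfies conditions (\ref{def:EO_hom:edge})--(\ref{def:EO_hom:monotone}) of Definition \ref{def:EO_hom} and also preserves longest paths. Condition (\ref{def:EO_hom:edge}): if $u\to v$ in $\G(T)$ there is a path $u\tp v$ in $T$, whose image is a path $h(u)\tp h(v)$ in $T'$, so $h(u)\to h(v)$ in $\G(T')$. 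Condition (\ref{def:EO_hom:point}) is inherited verbatim from $h$. Condition (\ref{def:EO_hom:monotone}): if $u\to v_1 \edgel u\to v_2$ in $\G(T)$ then $v_0\tp v_1 \prec^s v_0\tp v_2$ in $T$, where $v_0\tp v_i$ is the unique $T$-path; $h$ carries each such path to a path $v_0'\tp h(v_i)$ in the arborescence $T'$, which must therefore be \emph{the} unique such path there, and since $h$ preserves $\prec^s$ (Lemma \ref{lem:EO homs preserve lex order}) we obtain $v_0'\tp h(v_1)\prec^s v_0'\tp h(v_2)$ in $T'$. In particular $h(v_1)\neq h(v_2)$, so $h(u)\to h(v_1)$ and $h(u)\to h(v_2)$ are distinct edges of $\G(T')$ (they exist by (\ref{def:EO_hom:edge})), and the displayed inequality is exactly $h(u)\to h(v_1)\edgel h(u)\to h(v_2)$ in $\G(T')$. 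Finally, by Lemma \ref{lem:longest paths are tree edges} the longest path $u\tp v$ in $\G(T)$ is the unique $T$-path $u\tp v$; its image is a path $h(u)\tp h(v)$ in $T'$, hence the unique $T'$-path $h(u)\tp h(v)$, hence, by Lemma \ref{lem:longest paths are tree edges} applied to $T'$, the longest path $h(u)\tp h(v)$ in $\G(T')$.

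The one genuinely delicate step is condition (\ref{def:EO_hom:monotone}): one must see that the neighbourhood order of $\G(T)$ is a pullback of $\prec^s$ along ``take the unique tree-path'', that a graph homomorphism into an arborescence sends unique tree-paths to unique tree-paths, and that Lemma \ref{lem:EO homs preserve lex order} then forces the images to be \emph{strictly} $\prec^s$-ordered---which also re-proves, as a byproduct, that morphisms of $\mathbf{FinArb}^<_\star$ are injective on vertices. Everything else is routine bookkeeping.
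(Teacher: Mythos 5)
Your proposal is correct and follows essentially the same route as the paper's proof: the morphism conditions (\ref{def:EO_hom:edge})--(\ref{def:EO_hom:monotone}) are verified exactly as in the paper, with monotonicity obtained from Lemma \ref{lem:EO homs preserve lex order} plus uniqueness of paths in arborescences, and longest-path preservation from Lemma \ref{lem:longest paths are tree edges}. You are merely more explicit than the paper on the (routine) object-level check that $\G(T)$ satisfies Definition \ref{def:cat TArb}, which the paper leaves implicit.
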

\begin{proof}
Clearly $\G$ preserves identities and composition. We must check that for any morphism $h$ of $\mathbf{FinArb}^<_\star$, $\G(h)$ satisfies Definition \ref{def:EO_hom} (\ref{def:EO_hom:edge})-(\ref{def:EO_hom:monotone}) and preserves longest paths:
\begin{itemize}
    \item (\ref{def:EO_hom:edge}) If $u \to v$ in $\G(T)$, then $u \tp v$ in $T$, so $h(u) \tp h(v)$ in $T'$, so $h(u) \to h(v)$ in $\G(T')$.
    
    \item (\ref{def:EO_hom:point}) Since $h$ maps the distinguished point, and only that point, of $T$ to the distinguished point of $T'$, $\G$ does the same from $\G(T)$ to $\G(T')$.
    
    \item (\ref{def:EO_hom:monotone}) If $u \to v_1 \edgel u \to v_2$ in $\G(T)$, then $v_0 \tp v_1 \prec^s v_0 \tp v_2$ in $T$. By Lemma \ref{lem:EO homs preserve lex order}, $h(v_0 \tp v_1) \prec^s h(v_0 \tp v_2)$ in $T'$ , and since paths in arborescences are unique, $h(u) \tp h(v_1) \prec^s h(u) \tp h(v_2)$ in $T'$. Hence $h(u) \to h(v_1) \edgel h(v) \to h(v_2)$ in $\G(T')$. 
    
    \item Finally, if $u \tp v$ is the longest path in $\G(T)$, then it is a path in $T$ by Lemma \ref{lem:longest paths are tree edges}, and therefore, $h(u \tp v) = h(u) \tp h(v)$ is a path in $T'$. Again by Lemma \ref{lem:longest paths are tree edges}, $h(u \tp v)$ is the longest path in $\G(T')$.
\end{itemize}
\end{proof}

\begin{lemma}\label{lem:L is well-defined}
$\Ell$ is a well-defined functor $\mathbf{TArb} \to \mathbf{FinArb}^<_\star$.
\end{lemma}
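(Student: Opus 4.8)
The plan is to check the two requirements for $\Ell$ to be a functor into $\mathbf{FinArb}^<_\star$: that $\Ell(G)$ is a finite edge-ordered arborescence for every $G \in \mathbf{TArb}$, and that $\Ell(h)$ is an edge-ordered graph homomorphism for every $\mathbf{TArb}$-morphism $h$. Functoriality itself, i.e., preservation of identities and composites, needs no argument, since $\Ell$ leaves the underlying vertex function of a morphism untouched.

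For the object part, I would fix $G \in \mathbf{TArb}$, which by definition is the transitive closure (with a prescribed order) of some finite, connected, pointed, edge-ordered arborescence $T$. By Lemma~\ref{lem:longest paths are tree edges}, an edge $u \to v$ of $G$ lies on a longest path $v_0 \tp v$ in $G$ if and only if it is an edge of $T$; hence $\Ell(G)$ has exactly the edges of $T$, and since it shares the vertex set and distinguished point of $T$ as well, $\Ell(G)$ simply \emph{is} $T$ as a pointed directed graph, hence an arborescence. Finiteness is inherited from $G$, and each neighborhood $N_{\Ell(G)}(u) \subseteq N_G(u)$ carries the restriction of a linear order, so $\Ell(G)$ is a finite edge-ordered arborescence.

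For the morphism part, I would take $h : G \to H$ in $\mathbf{TArb}$ and verify conditions (\ref{def:EO_hom:edge})--(\ref{def:EO_hom:monotone}) of Definition~\ref{def:EO_hom} for $\Ell(h)$, which as a vertex function is just $h$. Condition (\ref{def:EO_hom:point}) is inherited verbatim from $h$, as $\Ell$ alters neither vertices nor the distinguished point. For (\ref{def:EO_hom:edge}): if $u \to v$ is an edge of $\Ell(G)$, then by Lemma~\ref{lem:longest paths are tree edges} it lies on (indeed is the final edge of) the unique longest path $\pi : v_0 \tp v$ in $G$; since $h$ preserves longest paths and sends the distinguished point of $G$ to that of $H$, the image $h(\pi)$ is the unique longest path from the distinguished point of $H$ to $h(v)$, so its final edge $h(u) \to h(v)$ lies on a longest path from the root, and is therefore an edge of $\Ell(H)$ by Lemma~\ref{lem:longest paths are tree edges} again. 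For (\ref{def:EO_hom:monotone}): from $u \to v_1 \edgel u \to v_2$ in $\Ell(G)$ the same relation holds in $G$ (the neighborhood order on $\Ell(G)$ being the one inherited from $G$), monotonicity of $h$ gives $h(u) \to h(v_1) \edgel h(u) \to h(v_2)$ in $H$, and since both $h(u) \to h(v_i)$ are edges of $\Ell(H)$ by the previous case while the order on $\Ell(H)$ is inherited from $H$, the relation survives in $\Ell(H)$.

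The only genuinely load-bearing step will be (\ref{def:EO_hom:edge}): this is exactly where the ``preserves longest paths'' clause built into $\mathbf{TArb}$-morphisms is needed, to transport an $\Ell(G)$-edge to an $\Ell(H)$-edge. Everything else is bookkeeping about restrictions of neighborhood orders together with the identification $\Ell(G) \cong T$ furnished by Lemma~\ref{lem:longest paths are tree edges}.
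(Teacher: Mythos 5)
Your proposal is correct and follows essentially the same route as the paper's proof: the object part rests on Lemma~\ref{lem:longest paths are tree edges} (you make the identification of $\Ell(G)$ with the underlying arborescence explicit, where the paper argues via closure of longest paths under prefixes, which amounts to the same thing), and the morphism part hinges, exactly as in the paper, on the longest-path-preservation clause of $\mathbf{TArb}$-morphisms to transport edges of $\Ell(G)$ to edges of $\Ell(H)$, with the point and monotonicity conditions inherited directly. Your write-up is just a more detailed version of the paper's argument, with no gaps.
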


\begin{proof}
In each graph $G \in \mathbf{TArb}$, the unique longest paths are closed under taking prefixes. Therefore, the union of all least longest paths forms an arborescence.

To check that $\Ell$ is a functor, note that for any morphism $h \in \mathbf{TArb}$, $\Ell(h)$ clearly preserves the distinguished point and is monotone on neighborhoods. We only need to show that if $u \to v$ is an edge in $\Ell(G)$ and $h : G \to H$ is a morphism in $\mathbf{TArb}$, then $h(u) \to h(v)$ is an edge of $\Ell(H)$. But, this guaranteed by the fact that $h$ preserves longest paths.

Finally, note that $\Ell$ preserves the identity morphism and respects composition.
\end{proof}

\begin{proof}[Proof of Theorem \ref{thm:arb/TArb adjunction}]
Fix a pointed, connected, edge-ordered arborescence $T$ and a pointed, connected, transitive, edge-ordered graph $G$.

Given $\G(T) \tot{h^\abv} G$ in $\mathbf{TArb}$, we define $T \tot{h_\blw} \Ell(G)$ by $h_\blw(v) = h^\abv(v)$. This is well-defined, because if $u \to v$ is an edge in $T$, then by Lemma \ref{lem:longest paths are tree edges}, it appears on the unique longest path $v_0 \tp v$ in $\G(T)$. Since $h^\abv$ preserves least longest paths, the edge $u \to v$ maps into $\Ell(G)$.  Moreover, $h_\blw$ preserves the distinguished point and inherits monotonicity in neighborhoods from $h^\abv$, so satisfies the conditions of Definition \ref{def:EO_hom} and is a morphism in $\mathbf{FinArb}^<_\star$.

In the other direction, given $T \tot{h_\blw} \Ell(G)$ in $\mathbf{FinArb}^<_\star$, we define $\G(T) \tot{h^\abv} G$ by $h^\abv(v) = h_\blw(v)$; let us unambiguously write $h(v)$ for brevity. If $u \to v$ is an edge of $\G(T)$, then there is a path $u \tp v$ in $T$, hence a path $h(u) \tp h(v)$ in $\Ell(G)$, and therefore an edge $h(u) \to h(v)$ in $G$. Moreover, if $u \to v_1 \edgel u \to v_2$ in $\G(T)$, then $v_0 \tp v_1 \prec^s v_0 \tp v_2$ in $T$, so $h(u) \tp h(v_1) \prec^s h(u) \tp h(v_2)$ in $\Ell(G)$, by Lemma \ref{lem:EO homs preserve lex order}. By the remark succeeding Lemma \ref{lem:longest paths are tree edges}, $h(u) \to h(v_1) \edgel h(u) \to h(v_2)$ in $G$. Finally, if $u \tp v$ is the longest path from $u$ to $v$ in $\G(T)$, then each of its edges lies in $T$ by Lemma \ref{lem:longest paths are tree edges}, hence its $h$-image lies in $\Ell(G)$, which means it is a longest path of $G$. Therefore, $h^\abv$ is a morphism of $\mathbf{TArb}$.

It remains to show that the maps relating $h^\abv$ and $h_\blw$ are bijective and natural. As in the proof of Theorem \ref{thm:lex/tlex adjunction}, this is immediate from the definition of each map; the only thing to show is that they were well-defined.
\end{proof}

\begin{proof}[Proof of Lemma \ref{lem:correctness of construction of DFT}]
Fix $u,v \neq v_0$. By definition of $\F$, $v_0 \to u \edgel v_0 \to v$ in $T$ iff $\min(v_0 \tp u) \prec \min(v_0 \tp v)$ in $(\I' \circ \T)(G)$. Since $\I'$ is an inclusion functor, this is equivalent to $\min(v_0 \tp u) \prec \min(v_0 \tp v)$ in $\T(G)$; indeed, the \emph{unique} path $v_0 \tp u$ is less than the \emph{unique} path $v_0 \tp v$ in $\T(G)$.

But the unique paths $v_0 \tp u$ and $v_0 \tp v$ in $\T(G)$ are exactly $\min(v_0 \tp u)$ and $\min(v_0 \tp v)$ in $G$ respectively; moreover, the relative order on the latter two paths in $G$ is inherited from the relative order on the former two in $\T(G)$.

Therefore, $v_0 \to u \edgel v_0 \to v$ in $T$ iff $\min(v_0 \tp u) \prec \min(v_0 \tp v)$ in $G$, but this is exactly the relation $\lexdft$ of Definition \ref{def:prec D}, which is the lexicographic depth-first traversal of $G$ by Theorem \ref{thm:alternate-formulation}.
\end{proof}

\begin{proof}[Proof of \ref{lem:correctness of construction of BFT}]
Fix $u,v \neq v_0$. By definition of $\G$, $v_0 \to u \edgel v_0 \to v$ in $T$ iff $v_0 \tp u \prec^s v_0 \tp v$ in $S(G)$ (where paths from $v_0$ are unique). By definition of $S$, this is equivalent to $\min^s(v_0 \tp u) \prec^s \min^s(v_0 \tp v)$ in $G$. By Corollary \ref{cor:bf canonical paths are minimal}, this is equivalent to $u \lexbft v$.
\end{proof}

\begin{proof}[Proof of Lemma \ref{lem:fingraph to finloset}]
For a finite, edge-ordered graph $G$, we define $E(G)$ to be the order $(\{v_0\} \cup N(v_0), <)$, where for $u,v \neq v_0$, $u < v \iff v_0 \to u \edgel v_0 \to v$, and for $u \neq v_0$, $v_0 < u$.

On morphisms, given a homomorphism of edge-ordered graphs $G \tot{h} H$, we define $\E(G) \tot{\E(h)} \E(H)$ by $\E(h)(v) = h(v)$. Notice that $E$ is well-defined, since it maps the distinguished point $v_0$ of $G$ to the distinguished point $w_0$ of $H$, and also maps $N_G(v_0)$ into $N_H(w_0)$. By definition, it is clear that $\E$ preserves both identities and compositions, so we have only left to show that $\E(h)$ is monotone.

Since $h$ maps only $v_0$ to $w_0$, it suffices to show that if $u,v \in N_G(v_0)$ and $v_0 \to u \edgel v_0 \to v$ in $G$, then $w_0 \to h(u) \edgel w_0 \to h(v)$ in $H$. But this follows from monotonicity of $h$ (Definition \ref{def:EO_hom}-(\ref{def:EO_hom:monotone}))
\end{proof}

\end{document}